\newlength{\defbaselineskip}
\newtheorem{theorem}{Theorem}
\newtheorem{definition}[theorem]{Definition}
\newtheorem{lemma}[theorem]{Lemma}
\newtheorem{corollary}[theorem]{Corollary}
\newtheorem{assumption}{Assumption}
\newenvironment{tableprime}[1]
  {%
   \addtocounter{table}{-1}%
   \begin{table}[H]}
  {\end{table}}
\def\C{\mathcal{C}}
\def\K{\mathcal{K}}
\newif\ifwithcomments
\newcommand{\xxx}[1]{{\color{red} (#1)}}
\newcommand{\xxx}[1]{}
\newcommand{\defeq}{\mathrel{\mathop:}=}
\DeclareMathOperator{\bigO}{\mathcal{O}}
\DeclareMathOperator{\nnz}{nnz}
\DeclareMathOperator{\argmin}{argmin}
\newcommand{\Expect}[1]{\mbox{}{\mathbf{E}}\left[#1\right]}
\def\reals{\mathbb{R}}
\def\lmax{\lambda_{\max}}
\def\lmin{\lambda_{\min}}
\def\tr{\textbf{tr}}
\newcommand{\vv}[1] {\mathbf{#1}}
\def\g{\vv{g}}
\def\u{\vv{u}}
\def\v{\vv{v}}
\def\x{\vv{x}}
\def\y{\vv{y}}
\def\w{\vv{w}}
\def\A{\vv{A}}
\def\a{\vv{a}}
\def\Q{\vv{Q}}
\def\X{\vv{X}}
\def\Y{\vv{Y}}
\def\U{\vv{U}}
\def\R{\vv{R}}
\def\D{\vv{D}}
\def\S{\vv{S}}
\def\M{\vv{M}}
\def\H{\vv{H}}
\def\L{\vv{L}}
\newcommand\bbR{\ensuremath{\mathbb{R}}} 
\begin{document}

\title{\Large Sub-sampled Newton Methods with Non-uniform Sampling}
\author{
  Peng Xu
  \thanks{
    Institute for Computational and Mathematical Engineering,
    Stanford University,
    Email: \{pengxu, jiyan\}@stanford.edu
  }
  \and
  Jiyan Yang
  \footnotemark[1]
  \and
  Farbod Roosta-Khorasani
  \thanks{
    International Computer Science Institute and Department of Statistics, University of California at Berkeley,    
    Email: \{farbod,mmahoney\}@stat.berkeley.edu
  }
  \and
 Christopher R{\'e}
 \thanks{
    Department of Computer Science,
    Stanford University,
    Email: chrismre@cs.stanford.edu
  }
  \and
  Michael W. Mahoney
  \footnotemark[2]
}

\date{\today}
\maketitle

\begin{abstract}
We consider the problem of finding the minimizer of a convex function $F: \reals^d \rightarrow \reals$ of the form $F(\w) \defeq \sum_{i=1}^n f_i(\w) + R(\w)$ where a low-rank factorization of $\nabla^2 f_i(\w)$ is readily available.
We consider the regime where $n \gg d$. 
As second-order methods prove to be effective in finding the minimizer to a high-precision, in this work, we propose randomized Newton-type algorithms that exploit \textit{non-uniform} sub-sampling of $\{\nabla^2 f_i(\w)\}_{i=1}^{n}$, as well as inexact updates, as means to reduce the computational complexity.
Two non-uniform sampling distributions based on {\it block norm squares} and {\it block partial leverage scores} are considered in order to capture important terms among $\{\nabla^2 f_i(\w)\}_{i=1}^{n}$. 
We show that at each iteration non-uniformly sampling at most $\bigO(d \log d)$ terms from $\{\nabla^2 f_i(\w)\}_{i=1}^{n}$ is sufficient to achieve a linear-quadratic convergence rate in $\w$ when a suitable initial point is provided.
In addition, we show that our algorithms achieve a lower computational complexity and exhibit more robustness and better dependence on problem specific quantities, such as the condition number, compared to similar existing methods, especially the ones based on uniform sampling.
Finally, 
we empirically demonstrate that our methods are at least twice as fast as Newton's methods with ridge logistic regression on several real datasets.
\end{abstract}

\section{Introduction}
\label{sec:intro}
Consider the following optimization problem
\begin{equation}
 \label{eq:p}
    \min_{\w \in \C} F(\w) \defeq \sum_{i=1}^n f_i(\w) + R(\w)
\end{equation}
where $f_i(\w), R(\w)$ are smooth functions and $\C \subseteq \reals^d$ is a convex constraint set. Many machine learning and scientific computing problems involve finding an approximation of the minimizer of the above optimization problem to \emph{high precision}. For example, consider any machine learning application where each $f_{i}$ is a loss function corresponding to $i^{th}$ data point and $F$ is the \emph{empirical risk}. The goal of solving~\eqref{eq:p} is to obtain a solution with small generalization error, i.e., high predictive accuracy on ``unseen'' data. One can show that the generalization error of the empirical risk minimizer is to within $\mathcal{O}(1/\sqrt{n})$ additive error of that of the true population risk minimizer; e.g., see~\cite{bousquet2002stability,cesa2004generalization,vapnik1989inductive,vapnik2013nature}. As a result, in large scale regime that we consider in this paper where there are many data points available, i.e., $n\gg 1$, obtaining a solution to~\eqref{eq:p} to high precision would indeed guarantee a low generalization error.
A second example is that in many problems in which the optimization variable $\w$ contains specific meanings, a high-precision solution to~\eqref{eq:p} is necessary for interpreting the results.
Examples of such settings arise frequently in machine learning such as sparse least squares~\cite{tibshirani1996regression}, generalized linear models (GLMs)~\cite{friedman2001elements} and metric learning problems~\cite{kulis2012metric} among many more modern large scale problems.

There is a plethora of first-order optimization algorithms~\cite{bubeck2014theory,nocedal2006numerical} for solving~\eqref{eq:p}. 
However, 
for ill-conditioned problems, it is often the case that first-order methods return a solution far from the minimizer, $\w^\ast$, albeit a low objective value. (See Figure~\ref{fig:app} in Section~\ref{sec: experiments} for example.)
On the other hand, most second-order algorithms prove to be more robust to such 
ill conditioning.
This is so since, using the curvature information, second-order methods properly rescale the gradient, such that it is a more appropriate direction to follow. 
For example, take the canonical second-order method, i.e., {\it Newton's method}, which, in the unconstrained case, has updates of the form 
\begin{eqnarray}
\label{eq:newton}\w_{t+1} = \w_{t} - [ \H(\w_t) ]^{-1} \g(\w_t),
\end{eqnarray}
where $\g(\w_t)$ and $\H(\w_t)$ denote the gradient and the Hessian of $F$ at $\w_{t}$, respectively.
Classical results indicate that under certain assumptions, Newton's method can achieve a locally super-linear convergence rate, which can be shown to be \textit{problem independent}! Nevertheless, the cost of forming and inverting the Hessian is a major drawback in using Newton's method in practice. 

In this regard, there has been a long line of work that tries to provide sufficient second-order information with feasible computations. 
For example, among the class of quasi-Newton methods, the BFGS algorithm~\citep{nocedal2006numerical} and its limited memory version~\citep{liu1989lbfgs} are the most celebrated.
However, the convergence guarantee of these methods can be much weaker than Newton's methods.
More recently, authors in~\citep{pilanci2015newton,erdogdu2015convergence,roosta2016sub1} considered using sketching and sampling techniques to construct an approximate Hessian matrix and using it in the update rule~\eqref{eq:newton}. They showed that such algorithms inherit a local linear-quadratic convergence rate with a substantial computational gain.


In this work, we propose novel, robust and highly efficient non-uniformly sub-sampled Newton methods (SSN) for a large sub-class of problem~\eqref{eq:p}, where the Hessian of $F(\w)$ in~\eqref{eq:p} can be written as 
\begin{equation}
 \H(\w) = \sum_{i=1}^n \A^T_i(\w) \A_i(\w) + \Q(\w),
\end{equation}
where $\A_i(\w) \in \reals^{k_i \times d}, \; i = 1,2,\ldots,n,$ are readily available and $\Q(\w)$ is some positive semi-definite matrix.
This situation arises very frequently in many applications such as machine learning. For example, take any problem where $f_i(\w) = \ell(\x_i^T \w)$, $\ell(\cdot)$ is any convex loss function and $\x_i$'s are data points, and $R(\w) = \frac{\lambda}{2} \|\w\|^2$. In such situations, $\A_i(\w)$ is simply $\sqrt{\ell''(\x_i^T \w)} \x_i^{T}$ and $\Q(\w) = \lambda \mathbf{I}$.

First, we choose a sampling scheme $\mathcal{S}$ that constructs an appropriate non-uniform sampling distribution $\{p_i\}_{i=1}^n$ over $\{\A_{i}(\w)\}_{i=1}^{n}$ 
and samples $s$ terms from $\{\A_{i}(\w)\}_{i=1}^{n}$.
The approximate Hessian, constructed as $\tilde \H(\w_t) = \sum_{i \in \mathcal{I}} \A_i^T(\w_t) \A_i(\w_t)/p_i + \Q(\w_t)$ where $\mathcal{I}$ denotes the set of sub-sampled indices, is then used to update the current iterate as 
\begin{eqnarray}
\label{eq:sub0}
\w_{t+1} = \w_{t} - [\widetilde \H(\w_t)]^{-1} \g(\w_t).
\end{eqnarray}
Second, when the dimension of the problem, i.e., $d$, is so large that solving the above linear system, i.e., \eqref{eq:sub0}, becomes infeasible, we consider solving \eqref{eq:sub0} inexactly by using an iterative solver $\mathcal{A}$, e.g., Conjugate Gradient or Stochastic Gradient Descent, with a few iterations such that a high-quality approximate solution can be produced with a less complexity. Such inexact updates used in many second-order optimization algorithms have been well studied in \cite{byrd2011use, dembo1982inexact}.



Under certain conditions, it can be shown that this type of randomized Newton-type algorithm can achieve a local linear-quadratic convergence rate shown as below (formally stated in Lemma~\ref{lem:general})
\begin{equation}
\label{eq: rate}
\|\w_{t+1} - \w^*\|_2 \le C_q \|\w_t - \w^*\|_2^2 + C_l \|\w_t - \w^*\|_2,
\end{equation}
where $C_q, C_l$ are some constants that can be controlled by the Hessian approximation quality, i.e., choice of sampling scheme $\mathcal{S}$,\footnote{To be more precise, by a sampling scheme here, we mean the way we construct the sampling distribution $\{p_i\}_{i=1}^n$, e.g., uniform sampling distribution or leverage scores sampling distribution, and the value of sampling size $s$.} and the solution quality of~\eqref{eq:sub0}, i.e., choice of solver $\mathcal{A}$.\footnote{By a solver here, we mean the choice of the specific algorithm with parameters specified, e.g., number of iterations, we use to obtain a high-quality approximation solution to the subproblem.}

Different choices of sampling scheme $\mathcal{S}$ and solver $\mathcal{A}$ lead to different complexities in SSN. Below, we briefly discuss their effects.
As can be seen, the total complexity of our algorithm can be characterized by the following three factors, each of which is affected by $\mathcal{S}$ or $\mathcal{A}$, or both.
\begin{compactitem}
\item Number of total iterations $T$ determined by the convergence rate, i.e., $C_q$ and $C_l$ in \eqref{eq: rate} which is affected by sampling scheme $\mathcal{S}$ and solver $\mathcal{A}$.
\item In each iteration, the time $t_{const}$ it needs to construct $\{p_i\}_{i=1}^n$ and sample $s$ terms, which is determined by sampling scheme $\mathcal{S}$.
\item In each iteration, the time $t_{solve}$ it needs to (implicitly) form $\tilde \H$ which is affected by sampling scheme $\mathcal{S}$ 
and to (inexactly) solve the linear problem~\eqref{eq:sub0} which is affected by solver $\mathcal{A}$.
\end{compactitem}
With these, the total complexity can be expressed as
\begin{equation}
 \label{eq:complexity0}
  T \cdot (t_{const} + t_{grad} + t_{solve}),
\end{equation}
where $t_{grad}$ is the time it takes to compute the full gradient $\nabla F(\w_t)$ which is not affected by the choice of $\mathcal{S}$ and $\mathcal{A}$ and will not be discussed in the rest of this paper. 


As discussed above, the choice of sampling scheme $\mathcal{S}$ and solver $\mathcal{A}$ plays an important role in our algorithm. Below, we focus on $\mathcal{S}$ first and discuss a few concrete sampling schemes.
A natural and simple approach is uniform sampling discussed in \citep{erdogdu2015convergence,roosta2016sub1, roosta2016sub2}.
The greatest advantage of uniform sampling is its simplicity of construction. 
However, in the presence of high non-uniformity among $\{\nabla^{2} f_i(\w)\}_{i=1}^{n}$, the sampling size required to sufficiently capture the curvature information of the Hessian can be very large, which makes the resulting algorithm less beneficial.

In this paper, we consider two non-uniform sampling schemes, \textit{block norm squares} and a new, and more general, notion of leverage scores named {\it block partial leverage scores} (Definition~\ref{def:bplev}); see Section~\ref{sec: main} for detailed construction.
The motivation for using these schemes is that we can in fact view the sufficient conditions for achieving the local linear-quadratic convergence rate~\eqref{eq: rate} as matrix approximation guarantees and these two sampling schemes can yield high-quality matrix approximations; see Section~\ref{subsec: conditions} for more details.
Recall that, the choice of $\mathcal{S}$ affects the three terms, namely, $T$ (manifested in $C_q$ and $C_l$), $t_{const}$, $t_{solve}$, in \eqref{eq:complexity0}. By leveraging and extending theories in randomized linear algebra, we can show how these terms may become when different sampling schemes are used as presented in Table~\ref{tab: results}. Detailed theory is elaborated in Section~\ref{sec: theory}.
As we can see, block norm squares sampling and leverage scores sampling require a smaller sampling size $s$ than uniform sampling does since $t_{solve} = sd^2$. Furthermore, the dependence of $C_q$ and $C_l$ on the condition number $\kappa$ reveals that leverage scores sampling is more robust to ill-conditioned problems; see Section~\ref{sec:unif_comp} for more detailed discussions.
 
\begin{table}
\centering
\begin{tabular}{c|c|c|cc|c}
\sc 
name & $t_{const}$ & $t_{solve} = sd^2$ & $C_q$ & $C_l$ & reference  \\
\hline
Newton's method & $0$ & $\bigO(nd^2)$ & $\tilde \kappa$ & $0$ & \cite{boyd2004convex} \\
SSN (leverage scores)& $\bigO(\nnz(\A) \log n)$ & $\tilde \bigO(d^3/\epsilon^2)$ & $\frac{\tilde \kappa}{1-\epsilon}$ & $\frac{\epsilon\sqrt{\kappa}}{1-\epsilon} $ &\textbf{This paper}  \\
SSN (block norm squares)& $\bigO(\nnz(\A))$ & $\tilde \bigO({\bf sr}(\A)d^2/\epsilon^2)$ & $\frac{\tilde \kappa}{1-\epsilon\kappa} $ & $\frac{\epsilon\kappa}{1-\epsilon\kappa}$ &\textbf{This paper} \\
SSN (uniform)& $\bigO(1)$ &  $\tilde\bigO\left(nd^2\frac{\max_i \|\A_i\|^2}{ \|\A\|^2}/\epsilon^2\right)$ & $\frac{\tilde \kappa}{1-\epsilon\kappa} $ & $\frac{\epsilon\kappa}{1-\epsilon\kappa}$ & \cite{roosta2016sub2}
\end{tabular}
\caption{Comparison between Newton's method and sub-sampled Newton methods (SSN) with different sampling schemes. In the above, $C_q$ and $C_l$ are the constants achieved in~\eqref{eq: rate}; $\tilde \kappa$ and $\kappa$ are constants related to the problem only defined in~\eqref{eq:constants}; $\A \in \reals^{\bigO(n)\times d}$ is a matrix that satisfies $\A_i^T \A_i = \H_i(\w)$ and $\A^T \A = \sum_{i=1}^n \H_i(\w)$; ${\bf sr}(\A)$ is the stable rank of $\A$ satisfying ${\bf sr}(\A) \leq d$; $\nnz(\A)$ denote the number of non-zero elements in $\A$.
Note here, to remove the effect of solver $\mathcal{A}$, we assume the subproblem~\eqref{eq:sub0} is solved exactly. Also, we assume the problem is unconstrained ($\C = \reals^d$) so that $t_{solve} = sd^2$.
}
\label{tab: results}
\end{table}

Next, we discuss the effect of the solver $\mathcal{A}$. Typically, a direct solver takes $\bigO(sd^2)$ time to solve the subproblem~\eqref{eq:sub0} where $s$ is the sampling size. This becomes prohibitive when $d$ is large. However, iterative solvers allow one to obtain a high quality approximation solution with a few iterations which may drastically drive down the complexity. For example, Conjugate Gradient (CG) takes $\bigO(sd\sqrt{\kappa} \log(1/\epsilon_0))$ to return an approximate solution with relative error $\epsilon_0$ where $\kappa$ is the condition number of the problem. In Lemma~\ref{lem:general} we show that such inexactness will not deteriorate the performance of SSN too much.

Indeed, based on \eqref{eq: rate}, it is possible to choose the parameters, e.g., sampling size $s$ in the sampling scheme $\mathcal{S}$ and number of iterations to run in solver $\mathcal{A}$, so that SSN converges in a constant linear rate, e.g.,
\begin{equation}
\label{eq:linear_rate0}
  \|\w_{t+1} - \w^\ast \| \leq \frac{1}{2} \|\w_t - \w^\ast \|. 
\end{equation}
By this way, the complexity per iteration of SSN can be explicitly given. In Table~\ref{tab: complexity} we summarize these results with comparison to other stochastic second-order approaches such as LiSSA~\citep{agarwal2016second}.
It can be seen from Table~\ref{tab: complexity} that compared to Newton's methods, these stochastic second-order methods trade the coefficient of the leading term $\bigO(nd)$ with some lower order terms that only depend on $d$ and condition numbers. 
Although SSN with non-uniform sampling has a quadratic dependence on $d$, its dependence on the condition number is better than the other methods. There are two main reasons.
First, the total power of the condition number is lower, regardless of the versions of the condition number needed.
Second, SSN (leverage scores) and SSN (block norm squares) only depend on $\kappa$ which can be significantly lower than the other two definitions of condition number, i.e., $\hat \kappa$ and $\bar \kappa$; see Section~\ref{sec:complexity_comp} for more details.
%

\begin{table}
\centering

\begin{tabular}{c|c|c}
\sc 
name & \sc complexity per iteration & \sc reference \\
\hline
Newton-CG method & $\tilde \bigO(\nnz(\A)\sqrt{\kappa})$ & \cite{nocedal2006numerical} \\
SSN (leverage scores) & $\tilde \bigO(\nnz(\A)\log n + d^2\kappa^{3/2})$ & {\bf This paper} \\
SSN (block norm squares) &  $\tilde \bigO(\nnz(\A) + {\bf sr}(\A)d\kappa^{5/2} )$ & {\bf This paper} \\
Newton Sketch (SRHT) & $\tilde\bigO(nd (\log n)^4 + d^2(\log n)^4 \kappa^{3/2})$ & \cite{pilanci2015newton} 
\\
SSN (uniform) & $\tilde\bigO(\nnz(\A) + d\hat\kappa \kappa^{3/2})$ & \cite{roosta2016sub2} \\
LiSSA & $\tilde\bigO(\nnz(\A) + d \hat\kappa \bar \kappa^2)$ & \cite{agarwal2016second}
\end{tabular}
\caption{
Complexity per iteration of different methods to obtain a problem independent local linear convergence rate. The quantities $\kappa$, $\hat \kappa$, and $\bar \kappa$ are the local condition numbers, defined in~\eqref{eq:cond_nums} at the optimum $\w^\ast$, satisfying 
$\kappa \leq \hat\kappa \leq \bar\kappa$; $\A \in \reals^{\bigO(n)\times d}$ is a matrix that satisfies $\A_i^T \A_i = \H_i(\w)$ and $\A^T \A = \sum_{i=1}^n \H_i(\w)$; ${\bf sr}(\A)$ is the stable rank of $\A$ satisfying ${\bf sr}(\A) \leq d$; $\nnz(\A)$ denote the number of non-zero elements in $\A$. 
Note here, for the ease of comparison, we assume $\C = \reals^d$, $R(\w) = 0$, and CG is used for solving sub-problems in SSN so that the complexities can be easily expressed.}
\label{tab: complexity}
\end{table}


As we shall see (in Section~\ref{sec: experiments}), our algorithms converge much faster than other competing methods with ridge logistic regression. In particular, on several real datasets with a moderately high condition number and large $n$, our methods are at least twice as fast as Newton's methods in finding a medium- or high-precision solution, while other methods including first-order methods converge slowly. Indeed, this phenomenon is well supported by our theoretical findings---the complexity of our algorithms has a lower dependence on the problem condition number and is immune to any non-uniformity among $\{\A_{i}(\w)\}_{i=1}^{n}$ which may cause a factor of $n$ in the complexity (Table~\ref{tab: complexity}). 
In the following we present other prior work and details of our main contributions.

\subsection{Related work}
Recently, within the context of randomized second-order methods, many algorithms have been proposed that aim at reducing the computational costs involving pure Newton's method. Among them, algorithms that employ uniform sub-sampling constitute a popular line of work~\cite{byrd2011use, erdogdu2015convergence, martens2010deep, vinyals2011krylov}. In particular~\citep{roosta2016sub1,roosta2016sub2} consider a more general class of problems and, under a variety of conditions, thoroughly study the local and global convergence properties of sub-sampled Newton methods where the gradient and/or the Hessian are uniformly sub-sampled.
Our work here, however, is more closely related to a recent work~\citep{pilanci2015newton} (Newton Sketch), which considers a similar class of problems and proposes sketching the Hessian using random sub-Gaussian matrices or randomized orthonormal systems.
Furthermore, \cite{agarwal2016second} proposes a stochastic algorithm (LiSSA) that, for solving the sub-problems, employs some unbiased estimators of the inverse of the Hessian.

The main technique used by \citep{pilanci2015newton,roosta2016sub1,roosta2016sub2} and our work is sketching, which is a powerful technique in randomized linear algebra and many other applications~\citep{woodruff2014sketching,mahoney2011randomized,yang2016review}. As mentioned above, Hessian approximation can be viewed as a matrix approximation problem.
In terms of this, error analysis of matrix approximation based on leverage scores sampling has been well studied \citep{drineas2008relative, drineas2012fast, cohen2015ridge}. \citet{holodnak2015randomized} show the lower bounds of sampling size for both block norm squares sampling and leverage scores sampling in approximating the Gram product matrix. Also \citep{cohen2015uniform} implies that uniform sampling cannot guarantee spectral approximation when the sampling size is only dependent on the lower dimension.

\subsection{Main contributions}
The contributions of this paper can be summarized as follows.
\begin{compactitem}

\item For the class of problems considered here, unlike the uniform sampling used in~\cite{byrd2011use,erdogdu2015convergence,roosta2016sub1,roosta2016sub2}, we employ two non-uniform sampling schemes based on \textit{block norm squares} and a new, and more general, notion of leverage scores named {\it block partial leverage scores} (Definition~\ref{def:bplev}). It can be shown that in the case of extreme non-uniformity among $\{\A_{i}(\w)\}_{i=1}^{n}$, uniform sampling might require $\Omega(n)$ samples to capture the Hessian information appropriately. However, we show that our non-uniform sampling schemes result in sample sizes completely \textit{independent of $n$} and are immune to such non-uniformity. 

\item 
Within the context of sub-sampled Newton-type algorithms,~\cite{byrd2011use,roosta2016sub2} incorporate inexact updates where the sub-problems are solved only approximately and study global convergence properties of their algorithms. We extend the study of inexactness to the finer level of local convergence analysis.
\item 
We provide a general structural result (Lemma~\ref{lem:general}) showing that, as in~\cite{erdogdu2015convergence, pilanci2015newton, roosta2016sub1}, our main algorithm exhibits a linear-quadratic {\it solution} error recursion. However, we show that by using our non-uniform sampling strategies, the factors appearing in such error recursion enjoy a much better dependence on problem specific quantities, e.g.,  such as the condition number (Table~\ref{tab: comp}). For example, using block partial leverage score sampling, the factor for the linear term of the error recursion~\eqref{eq:rate0_inexact} is of order $\mathcal{O}(\sqrt{\kappa})$ as opposed to $\mathcal{O}(\kappa)$ for uniform sampling.
\item We show that to achieve a locally {\it problem independent} linear convergence rate, i.e., $\|\w_{t+1} - \w^\ast \| \leq \rho \|\w_t - \w^\ast \|$ for some fixed constant $\rho < 1$, the per-iteration complexities of our algorithm with leverage scores sampling and block norm squares sampling are $\tilde \bigO(\nnz(\A)\log n + d^2\kappa^{3/2})$ and $\tilde \bigO(\nnz(\A) + {\bf sr}(\A)d\kappa^{5/2})$, respectively, which have lower dependence on condition numbers compared to~\cite{agarwal2016second, pilanci2015newton,roosta2016sub2} (Table~\ref{tab: complexity}). In particular, 
in the presence of high non-uniformity among $\{\A_{i}(\w)\}_{i=1}^{n}$, factors $\bar \kappa$ and $\hat \kappa$ (see Definition~\eqref{eq:cond_nums}) which appear in SSN (uniform)~\cite{roosta2016sub1}, and LiSSA~\cite{agarwal2016second}, can potentially be as large as $\Omega(n \kappa)$;  see Section~\ref{sec:comparison} for detailed discussions.
\item We numerically demonstrate the effectiveness and robustness of our algorithms in recovering the minimizer of ridge logistic regression on several real datasets with a moderately large condition number (Figures~\ref{fig: lambdas},~\ref{fig:app}~and~\ref{fig: logistic}). In particular, our algorithms are at least twice as fast as Newton's methods in finding a medium- or high-precision solution, while other methods including first-order methods converge slowly. 

\end{compactitem}

\vspace{2mm}
\noindent
The remainder of the paper is organized as follows. We begin in Section~\ref{sec: background} with notation and assumptions that will be used throughout the paper. In Section~\ref{sec: main}, we describe our main algorithm and propose two non-uniform sampling schemes. Section~\ref{sec: theory} provides theoretical analysis. 
Finally, we present our numerical experiments in Section~\ref{sec: experiments}. 

\section{Background}\label{sec: background}


\subsection{Notation} \label{sec:notation}
Given a function $F$, the gradient, the exact Hessian and the approximate Hessian are denoted by $\g$, $\H$, and $\widetilde \H$, respectively. Iteration counter is denoted by subscript, e.g., $\w_{t}$. Unless stated specifically, $\|\cdot\|$ denotes the Euclidean norm for vectors and spectral norm for matrices. Frobenius norm of matrices is written as $\|\cdot\|_{F}$.
Given a matrix $\A$, we let $\nnz(\A)$ denote the number of non-zero elements in $\A$.
By a matrix $\A$ having $n$ blocks, we mean that $\A$ has a block structure and can be viewed as $\A = \begin{pmatrix} \A_{1}^{T}  \cdots \A_{n}^{T} \end{pmatrix}^{T}$, for appropriate size blocks $\A_{i}$.

\begin{definition}[Tangent Cone]
Denote $\K$ be the tangent cone of constraints $\C$ at the optimum $\w^\ast$, i.e., $\K = \{ \Delta \vert \w^\ast + t\Delta \in \C \text{ for some } t>0\}$.
\end{definition}

\begin{definition}[$\K$-restricted Maximum and Minimum Eigenvalues]
\label{def:k_eig}
Given a symmetric matrix $\A$ and a cone $\K$, we define the $\K$-restricted maximum and minimum eigenvalues as follows.
\begin{equation*}
  \lmin^\K (\A) = \min_{\x \in \K \setminus \{\mathbf{0}\}} \frac{\x^T \A \x}{\x^T \x}, ~~~\lmax^\K (\A) = \max_{\x \in \K \setminus \{\mathbf{0}\}} \frac{\x^T \A \x}{\x^T \x}.
\end{equation*}
\end{definition}

\begin{definition}[Stable Rank]
\label{def: sr}
Given a matrix $\A \in \reals^{N\times d}$, the stable rank of $A$ is defined as 
$${\bf sr}(\A) = \dfrac{\|\A\|_F^2}{\|\A\|_2^2}.$$
\end{definition}


\begin{definition}[Leverage Scores]
\label{def:lev}
 Given $\A\in \bbR^{n\times d}$, then for $i=1,\ldots,n$, the $i$-th leverage scores of $\A$ is defined as
\begin{equation*}
\label{eq:def_lev}
\tau_i (\A) = \a_i^T(\A^T\A)^\dagger \a_i.
\end{equation*}
\end{definition}


\subsection{Assumptions}
\label{sec:assumptions}
Throughout the paper, we use the following assumptions regarding the properties of the problem.
\begin{assumption}[Lipschitz Continuity]
\label{assump:1}
$F(\w)$ is convex and twice differentiable. The Hessian is $L$-Lipschitz continuous, i.e.
\begin{equation*}
\label{eq:lip}
\|\nabla^2 F(\u) - \nabla^2 F(\v) \|\le L \|\u- \v\|, \quad \forall \u, \v \in \C.
\end{equation*}
\end{assumption}

\begin{assumption}[Local Regularity]
\label{assump:2}
$F(\x)$ is locally strongly convex and smooth, i.e.,
 \begin{equation*}
\label{eq:loc_reg}
\mu = \lmin^\K (\nabla^2 F(\w^\ast)) > 0, \quad \nu = \lmax^\K (\nabla^2 F(\w^\ast)) < \infty.
\end{equation*}
Here we define the local condition number of the problem as $\kappa: = \nu/\mu$.
\end{assumption}

\begin{assumption}[Hessian Decomposition]
\label{assump:pos}
For each $f_i(\w)$ in~\eqref{eq:p}, define $\nabla^2 f_i(\w) \defeq \H_i(\w) \defeq \A_i^T(\w) \A_i(\w)$. For simplicity, we assume $k_1 = \cdots = k_n = k$ and $k$ is independent of $d$. Furthermore, we assume that given $\w$, computing $\A_i(\w)$, $\H_i(\w)$, and $\g(\w)$ takes $\bigO(d)$, $\bigO(d^2)$, and $\bigO(\nnz(\A))$ time, respectively. We call the matrix $\A(\w) = \begin{pmatrix} \A_1^T, \ldots, \A_n^T \end{pmatrix}^T \in \reals^{nk\times d}$ the augmented matrix of $\{\A_i(\w)\}$. Note that $\H(\w) = \A(\w)^T \A(\w) + \Q(\w)$.
\end{assumption}



\section{Main Algorithm: SSN with Non-uniform Sampling}\label{sec: main}



%

Our proposed SSN method with non-uniform sampling is given in Algorithm~\ref{alg:main}. The core of our algorithm is based on choosing a sampling scheme $\mathcal{S}$ that, at every iteration, constructs a non-uniform sampling distribution $\{p_i\}_{i=1}^n$ over $\{\A_i(\w_t)\}_{i=1}^n$ and then samples from $\{\A_i(\w_t)\}_{i=1}^n$ to form the approximate Hessian, $\widetilde \H(\w_t)$. The sampling sizes $s$ needed for different sampling distributions will be discussed in Sections~\ref{sec:lev} and \ref{sec:rnorms}. Since $\H(\w) = \sum_{i=1}^n \A^T_i(\w) \A_i(\w) + \Q(\w)$, the Hessian approximation essentially boils down to a matrix approximation problem. Here, we generalize the two popular non-uniform sampling strategies, i.e., leverage score sampling and block norm squares sampling, which are commonly used in the field of randomized linear algebra, particularly for matrix approximation problems \cite{holodnak2015randomized,mahoney2011randomized}. 
With an approximate Hessian constructed via non-uniform sampling, we may choose an appropriate solver $\mathcal A$ to the solve the sub-problem in Step 11 of Algorithm~\ref{alg:main}. 
Below we elaborate on the construction of the two non-uniform sampling schemes. 
Indeed, the sampling distribution is defined based on the matrix representation of $\{\H_i(\w_t)\}_{i=1}^n$ --- its augmented matrix defined as follows.
\begin{definition}[Augmented Matrix]
\label{def:aug}
Define the augmented matrix of $\{\H_i(\w_t)\}_{i=1}^n$ as
$$ \A(\w) = \begin{pmatrix} \A_1^T & \cdots & \A_n^T
 \end{pmatrix}^T \in \reals^{kn\times d}. $$
\end{definition}
\noindent
For the ease of presentation, throughout the rest of this section and next section, we use $\A$ and $\Q$ to denote $\A(\w)$ and $\Q(\w)$, respectively, as long as it is clear in the text.

\begin{algorithm}[ht]
\caption{Sub-sampled Newton method(SSN) with Non-uniform Sampling}
\label{alg:main}
\begin{algorithmic}[1]
\STATE {\bf Input:} Initialization point $\w_0$, number of iteration $T$, sampling scheme $\mathcal{S}$ and solver $\mathcal{A}$.
\STATE {\bf Output:} $\w_{T}$
\FOR{$t = 0,\ldots,T-1$}
\STATE Construct the non-uniform sampling distribution $\{p_i\}_{i=1}^n$ as described in Section~\ref{sec: main}. 
\FOR{$i = 1,\ldots,n$}
\STATE $q_i = \min\{s\cdot p_i, 1\}$, where $s$ is the sampling size.
\STATE 
   $ \widetilde \A_i(\w_t) =
  \begin{cases} \A_i(\w_t) / \sqrt{q_i}, & \text{with probability } q_i, \\
   \mathbf{0}, & \text{with probability } 1 - q_i.
  \end{cases} $
\ENDFOR
\STATE $\widetilde \H(\w_t) = \sum_{i=1}^n \widetilde \A_i^T(\w_t) \widetilde \A_i(\w_t) + \Q(\w_t)$.
\STATE Compute $\g(\w_t)$
\STATE Use solver $\mathcal{A}$ to solve the sub-problem inexactly
\vspace{-1mm}
\begin{equation}
\label{eq:sub}
\vspace{-2mm}
\w_{t+1} \approx \arg\min_{\w\in \mathcal C} \{\frac{1}{2}\langle (\w - \w_t), \widetilde \H(\w_t) (\w - \w_t)\rangle + \langle \g(\w_t), \w-\w_t\rangle\}.
\end{equation}
\ENDFOR
\RETURN $\w_{T}$.
\end{algorithmic}
\end{algorithm}



\paragraph{Block Norm Squares Sampling}
The first option is to construct a sampling distribution based on the magnitude of $\A_i$. That is, define
\begin{equation}
\label{eq:dist_row_norms}
  p_i = \frac{\|\A_i\|_F^2}{\|\A\|_F^2}, \quad i = 1,\ldots,n.
\end{equation}
This is an extension to the row norm squares sampling in which the intuition is to capture the importance of the blocks based on the ``magnitudes'' of the sub-Hessians. 

\paragraph{Block Partial Leverage Scores Sampling}
The second option is to construct a sampling distribution based on leverage scores.
Compared to the traditional matrix approximation problem, this problem has two major difficulties. First, here blocks are being sampled, not single rows. Second, the matrix being approximated involves not only $\A$ but also $\Q$.

To address the first difficulty, we follow the work by \citet{harvey_sparse} in which a sparse sum of semi-definite matrices is found by sampling based on the trace of each semi-definite matrix after a proper transformation. By expressing $\A^T \A = \sum_{i=1}^n \A_i^T \A_i$, one can show that their approach is essentially sampling based on the sum of leverage scores that correspond to each block.   
For the second difficulty, inspired by the recently proposed ridge leverage scores~\cite{el2014fast, cohen2015ridge}, we consider the leverage scores of a matrix that concatenates $\A$ and $\Q^\frac{1}{2}$.
Combining these motivates us to define a new notion of leverage scores ---- block partial leverage scores which is define as follows formally.
\begin{definition}[Block Partial Leverage Scores]
\label{def:bplev}
Given a matrix $\A \in \reals^{kn\times d}$ with $n$ blocks and a matrix $\Q \in \reals^{d\times d}$ satisfying $\Q \succeq \mathbf{0}$, let $\{\tau_i\}_{i=1}^{kn+d}$ be the leverage scores of the matrix $\begin{pmatrix} \A \\ \Q^\frac{1}{2}\end{pmatrix}$. Define the block partial leverage score for the $i$-th block as
 \begin{equation*}
     \tau^\Q_i(\A) = \sum_{j=k(i-1)+1}^{ki} \tau_j.
 \end{equation*}
\end{definition}

\noindent
Then the sampling distribution is defined as
\begin{equation}
\label{eq:dist_lev}
  p_i = \frac{\tau_i^\Q(\A)}{\sum_{j=1}^n \tau_j^\Q(\A)}, \quad i = 1,\ldots,n.
\end{equation}

\noindent
{\bf Remark.} 
When each block of $\A$ has only one row and $\Q = \mathbf{0}$, the partial block leverage scores are equivalent to the ordinary leverage scores defined in Definition~\ref{def:lev}.



\section{Theoretical Results}\label{sec: theory}

In this section we provide detailed theoretical analysis to describe the complexity of our algorithm.\footnote{In this work, we only focus on local convergence guarantees for Algorithm~\ref{alg:main}. To ensure global convergence, one can incorporate an existing globally convergent method, e.g.~\cite{roosta2016sub2}, as initial phase and switch to Algorithm~\ref{alg:main} once the iterate is ``close enough'' to the optimum; see Lemma~\ref{lem:general}.} Different choices of sampling scheme $\mathcal{S}$ and the sub-problem solver $\mathcal{A}$ lead to different complexities in SSN. More precisely, total complexity is characterized by the following four factors: {\bf (i)} total number of iterations $T$ determined by the convergence rate which is affected by the choice of $\mathcal{S}$ and $\mathcal{A}$;  {\bf (ii)} the time, $t_{grad}$, it takes to compute the full gradient $\g(\w_t)$ (Step 10 in Algorithm~\ref{alg:main}), {\bf (iii)} the time $t_{const}$, to construct the sampling distribution $\{p_i\}_{i=1}^n$ and sample $s$ terms at each iteration (Steps 4-8 in Algorithm~\ref{alg:main}), which is determined by $\mathcal{S}$;  and {\bf (iv)} the time $t_{solve}$ needed to (implicitly) form $\tilde \H$ and (inexactly) solve the sub-problem at each iteration (Steps 9 and 11 in Algorithm~\ref{alg:main}) which is affected by the choices of both $\mathcal{S}$ (manifested in the sampling size $s$) and $\mathcal{A}$. With these, the total complexity can be expressed as
\begin{equation}
 \label{eq:complexity}
  T \cdot (t_{grad} + t_{const} + t_{solve}).
\end{equation}

Below we study these contributing factors.
Lemma~\ref{lem:general} in Section~\ref{subsec: conditions} gives a general structural lemma that characterize the convergence rate which determines $T$.
In Sections~\ref{sec:lev} and \ref{sec:rnorms}, Lemmas~\ref{lem:lev_const} and \ref{lem:rnorms_const} discuss $t_{const}$ for the two sampling schemes respectively while Lemmas~\ref{thm:lev} and \ref{thm:rnorms} give the required sampling size $s$ for the two sampling schemes respectively which directly affects the $t_{solve}$. Furthermore, $t_{solve}$ is also affected by the choice of solver which will be discussed in Section~\ref{sec:solvers}. Finally, the complexity results are summarized in Section~\ref{sec:complexity} and a comparison with other methods is provided in Section~\ref{sec:comparison}.


\subsection{Sufficient conditions for local linear-quadratic convergence}
\label{subsec: conditions}
Before diving into details of the complexity analysis, we state a structural lemma that characterizes the local convergence rate of our main algorithm, i.e., Algorithm~\ref{alg:main}.
As discussed earlier, there are two layers of approximation in Algorithm~\ref{alg:main}, i.e., approximation of the Hessian by sub-sampling and inexactness of solving~\eqref{eq:sub}. For the first layer, we require the approximate Hessian to satisfy one of the following two conditions (in Sections~\ref{sec:lev} and \ref{sec:rnorms} we shall see our construction of approximate Hessian via non-uniform sampling can achieve these conditions with a sampling size independent of $n$).
\begin{equation}
\label{eq:cond1}
\| \widetilde \H(\w_t)  - \H(\w_t) \| \le \epsilon \cdot \|\H(\w_t)\|, \tag{\bf C1}
\end{equation}
or
\begin{equation}
\label{eq:cond2}
|\x^T (\widetilde \H(\w_t)  - \H(\w_t))\y| \le \epsilon \cdot \sqrt{\x^T \H(\w_t) \x}\cdot \sqrt{\y^T \H(\w_t) \y}, ~~\forall \x,\y\in \K. \tag{\bf C2}
\end{equation}
Note that \eqref{eq:cond1} and \eqref{eq:cond2} are two commonly seen guarantees for matrix approximation problems. In particular, \eqref{eq:cond2} is stronger in the sense that the spectrum of the approximated matrix $\H(\w_t)$ is well preserved. Below in Lemma~\ref{lem:general}, we shall see such a stronger condition ensures a better dependence on the condition number in terms of the convergence rate.
For the second layer of approximation, we require the solver to produce an $\epsilon_0$-approximate solution $\w_{t+1}$ satisfying
\begin{equation}
 \label{eq:err_req}
   \|\w_{t+1} - \w_{t+1}^\ast \| \leq \epsilon_0 \cdot \|\w_t - \w_{t+1}^\ast\|,
\end{equation}
where $\w_{t+1}^\ast$ is the exact optimal solution to~\eqref{eq:sub}. Note that~\eqref{eq:err_req} implies an $\epsilon_{0}$-relative error approximation to the exact update direction, i.e., $\| \v - \v^\ast \| \leq \epsilon \|\v^\ast\|$ where $\v = \w_{t+1} - \w_{t}, \: \v^\ast = \w_{t+1}^\ast - \w_t$.

\noindent
{\bf Remark.}
When the problem is unconstrained, i.e., $\C = \reals^d$, solving the subproblem~\eqref{eq:sub} is equivalent to solving
\begin{equation*}
  \tilde \H_t \v = -\nabla F(\w_t).
\end{equation*}
Then requirement~\eqref{eq:err_req} is equivalent to finding an approximation solution $\v$ such that
\begin{equation*}
  \| \v - \v^\ast \| \leq \epsilon \|\v^\ast\|.
\end{equation*}

\begin{lemma}[Structural Result]
\label{lem:general}
Let $\{\w_t\}_{i=1}^T$ be the sequence generated based on update rule~\eqref{eq:sub} with initial point $\w_0$ satisfying $\|\w_0 - \w^\ast \| \leq \frac{\mu}{4L}$. 
Under Assumptions $\ref{assump:1}$ and $\ref{assump:2}$, if condition~\eqref{eq:cond1} or \eqref{eq:cond2} is met, we have the following results.
\begin{itemize}
\item If the subproblem is solved exactly,
then the solution error satisfies the following recursion
\begin{equation}
\label{eq:rate0}
\|\w_{t+1} - \w^*\| \le C_q \cdot \| \w_t - \w^*\|^2 + C_l \cdot \|\w_t - \w^*\|,
\end{equation}
where $C_q$ and $C_l$ are specified in \eqref{eq:rate1} or \eqref{eq:rate2} below.
\item If the subproblem is solved approximately and $\w_{t+1}$ satisfies~\eqref{eq:err_req}, then the solution error satisfies the following recursion
\begin{equation}
\label{eq:rate0_inexact}
\|\w_{t+1} - \w^*\| \le (1+\epsilon_0)C_q \cdot \| \w_t - \w^*\|^2 + (\epsilon_0 + (1+\epsilon_0)C_l) \cdot \|\w_t - \w^*\|,
\end{equation}
where $C_q$ and $C_l$ are specified in \eqref{eq:rate1} or \eqref{eq:rate2} below.
\end{itemize}
Specifically, given any $\epsilon \in (0,1/2)$, 
\begin{itemize}
\item
If the approximate Hessian $\tilde \H_t$ satisfies~\eqref{eq:cond1}, then in \eqref{eq:rate0} and \eqref{eq:rate0_inexact}
\begin{equation}
\label{eq:rate1}
 C_q = \frac{2L}{(1- 2\epsilon \kappa) \mu}, \quad C_l = \frac{4\epsilon \kappa}{1-2\epsilon \kappa}.
\end{equation}
\item
If the approximate Hessian $\tilde \H_t$ satisfies~\eqref{eq:cond2}, then in \eqref{eq:rate0} and \eqref{eq:rate0_inexact}
\begin{equation}
\label{eq:rate2}
 C_q = \frac{2L}{(1- \epsilon) \mu}, \quad C_l =\frac{3\epsilon }{1-\epsilon}\sqrt{\kappa}.
\end{equation}
\end{itemize}
\end{lemma}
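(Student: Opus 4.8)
The plan is to prove the exact-solve recursion \eqref{eq:rate0} first, and then derive the inexact version \eqref{eq:rate0_inexact} from it by a triangle-inequality argument. For the exact case, start from the exact update $\w_{t+1} = \w_t - [\widetilde\H(\w_t)]^{-1}\g(\w_t)$ (in the unconstrained case; in the constrained case one uses the projection/optimality characterization of the minimizer of the quadratic model over $\C$, which gives a variational inequality against directions in the tangent cone). Using $\g(\w^\ast)=\mathbf{0}$, write
\begin{equation*}
\w_{t+1}-\w^\ast = \w_t - \w^\ast - [\widetilde\H(\w_t)]^{-1}(\g(\w_t)-\g(\w^\ast)),
\end{equation*}
and insert the integral form $\g(\w_t)-\g(\w^\ast) = \left(\int_0^1 \H(\w^\ast + \tau(\w_t-\w^\ast))\,d\tau\right)(\w_t-\w^\ast)$. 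Factoring out $[\widetilde\H(\w_t)]^{-1}$, the error becomes $[\widetilde\H(\w_t)]^{-1}$ applied to
\begin{equation*}
\big(\widetilde\H(\w_t) - \H(\w_t)\big)(\w_t-\w^\ast) \;+\; \Big(\H(\w_t) - \int_0^1 \H(\w^\ast+\tau(\w_t-\w^\ast))\,d\tau\Big)(\w_t-\w^\ast).
\end{equation*}
The second bracket is bounded by $\tfrac{L}{2}\|\w_t-\w^\ast\|$ via Assumption~\ref{assump:1} (Hessian Lipschitz), producing the quadratic term. The first bracket is controlled by condition \eqref{eq:cond1} or \eqref{eq:cond2}, producing the linear term.

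The key quantitative step is to lower-bound the relevant restricted eigenvalue of $\widetilde\H(\w_t)$ so that $\|[\widetilde\H(\w_t)]^{-1}(\cdot)\|$ is controlled. Here I would use that $\|\w_t-\w^\ast\|\le \tfrac{\mu}{4L}$ together with Hessian Lipschitzness to get $\|\H(\w_t)-\H(\w^\ast)\|$ small (on the order of $\mu/4$), hence $\lmin^\K(\H(\w_t))$ is within a constant factor of $\mu$ and $\lmax^\K(\H(\w_t))$ within a constant factor of $\nu$; then the approximation condition transfers this to $\widetilde\H(\w_t)$. Under \eqref{eq:cond1}, $\|\widetilde\H-\H\|\le \epsilon\|\H\|\le \epsilon\,(\text{const})\,\nu$, so $\lmin^\K(\widetilde\H)\gtrsim \mu - \epsilon\,(\text{const})\,\nu = \mu(1-\epsilon\kappa\cdot\text{const})$, which is where the $(1-2\epsilon\kappa)$ denominator and the $\kappa$ in $C_l$ come from. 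Under the stronger \eqref{eq:cond2}, the bilinear-form bound gives $(1-\epsilon)\,\x^\top\H\x \le \x^\top\widetilde\H\x \le (1+\epsilon)\,\x^\top\H\x$ on $\K$, i.e. a genuine spectral sandwich, so the loss is only a $(1-\epsilon)$ factor with no extra $\kappa$, and the linear term picks up only $\sqrt{\kappa}$ because the cross term $\|(\widetilde\H-\H)(\w_t-\w^\ast)\|$ is estimated through the $\H$-norm: $|\x^\top(\widetilde\H-\H)(\w_t-\w^\ast)| \le \epsilon\sqrt{\x^\top\H\x}\sqrt{(\w_t-\w^\ast)^\top\H(\w_t-\w^\ast)} \le \epsilon\sqrt{\nu}\,\|\x\|\,\sqrt{\nu}\|\w_t-\w^\ast\|$ against a lower bound of order $\mu$ on $\widetilde\H$, giving $\nu/\mu = \kappa$ under C1-style handling but $\sqrt{\nu}/\sqrt{\mu}\cdot$(something) $=\sqrt\kappa$ once one is careful to keep one factor of $\H^{1/2}$ attached to the inverse — this bookkeeping of where the $\H^{1/2}$ factors land is the delicate part. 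I would track constants loosely and only at the end match them to the stated $2L$, $4\epsilon\kappa$, $3\epsilon\sqrt\kappa$.

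For the inexact case, let $\w_{t+1}^\ast$ denote the exact minimizer of the quadratic subproblem \eqref{eq:sub} and $\w_{t+1}$ the computed approximation with $\|\w_{t+1}-\w_{t+1}^\ast\|\le \epsilon_0\|\w_t-\w_{t+1}^\ast\|$. Write $\|\w_{t+1}-\w^\ast\|\le \|\w_{t+1}-\w_{t+1}^\ast\| + \|\w_{t+1}^\ast-\w^\ast\|$; the second term is bounded by the already-proven exact recursion $C_q\|\w_t-\w^\ast\|^2 + C_l\|\w_t-\w^\ast\|$, and for the first term use $\|\w_{t+1}^\ast - \w_t\| \le \|\w_{t+1}^\ast-\w^\ast\| + \|\w^\ast - \w_t\| \le C_q\|\w_t-\w^\ast\|^2 + (1+C_l)\|\w_t-\w^\ast\|$, so $\|\w_{t+1}-\w_{t+1}^\ast\| \le \epsilon_0\big(C_q\|\w_t-\w^\ast\|^2 + (1+C_l)\|\w_t-\w^\ast\|\big)$. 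Adding the two bounds collects exactly into $(1+\epsilon_0)C_q\|\w_t-\w^\ast\|^2 + (\epsilon_0 + (1+\epsilon_0)C_l)\|\w_t-\w^\ast\|$ as claimed.

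The main obstacle I anticipate is the constrained case: replacing the clean identity $\w_{t+1}-\w^\ast = [\widetilde\H]^{-1}(\cdots)$ with the correct argument when $\w_{t+1}$ is the constrained minimizer of the quadratic model. There one uses the first-order optimality condition $\langle \widetilde\H(\w_{t+1}-\w_t) + \g(\w_t),\, \w - \w_{t+1}\rangle \ge 0$ for all $\w\in\C$, plugs in $\w=\w^\ast$, and similarly uses optimality of $\w^\ast$ for $F$ to get $\langle \g(\w^\ast), \w-\w^\ast\rangle\ge 0$; combining these and controlling everything through $\K$-restricted eigenvalues (which is why the hypotheses and conditions are stated with $\K$) reproduces the same bound with $\lmin,\lmax$ replaced by their $\K$-restricted counterparts. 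The algebra is standard (it mirrors the projected-Newton analyses in \cite{pilanci2015newton,roosta2016sub1}) but must be carried out carefully so that only restricted eigenvalues, never full ones, appear.
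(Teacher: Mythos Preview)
Your proposal is correct, and the inexact-solve part (Part 2) is word-for-word the paper's argument. The exact-solve part differs in organization: you start from the unconstrained identity $\w_{t+1}-\w^\ast = [\widetilde\H(\w_t)]^{-1}(\cdots)$ and relegate the constrained case to a final remark about first-order optimality conditions. The paper instead handles both cases at once by using only the function-value comparison $\Psi_t(\w_{t+1}) \le \Psi_t(\w^\ast)$ (valid because $\w_{t+1}$ is the constrained minimizer and $\w^\ast\in\C$), which after expanding and adding the nonnegative term $\nabla F(\w^\ast)^T\Delta_{t+1}\ge 0$ gives
\[
\tfrac{1}{2}\Delta_{t+1}^T\widetilde\H_t\Delta_{t+1} \;\le\; \Delta_t^T\widetilde\H_t\Delta_{t+1} - (\nabla F(\w_t)-\nabla F(\w^\ast))^T\Delta_{t+1}.
\]
Your first-order optimality route in the constrained case produces the same inequality (indeed with a factor $1$ rather than $\tfrac12$ on the left), so the two are equivalent; the paper's version is just shorter because it avoids the case split entirely.

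The one place where the paper's route buys something concrete is the $\sqrt{\kappa}$ bookkeeping under \eqref{eq:cond2}, which you describe as ``the delicate part.'' Working from the displayed quadratic-form inequality, the paper divides by $\Delta_{t+1}^T\widetilde\H_t\Delta_{t+1}\ge(1-\epsilon)\Delta_{t+1}^T\H(\w_t)\Delta_{t+1}$; the cross term in the numerator is bounded via \eqref{eq:cond2} by $\epsilon\sqrt{\Delta_t^T\H(\w_t)\Delta_t}\sqrt{\Delta_{t+1}^T\H(\w_t)\Delta_{t+1}}$, so one factor of $\sqrt{\Delta_{t+1}^T\H(\w_t)\Delta_{t+1}}$ cancels against the denominator, leaving the ratio $\sqrt{\Delta_t^T\H\Delta_t/\Delta_{t+1}^T\H\Delta_{t+1}}\le \sqrt{\lmax^\K/\lmin^\K}\cdot\|\Delta_t\|/\|\Delta_{t+1}\|$. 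This is exactly the cancellation you allude to with ``keep one factor of $\H^{1/2}$ attached to the inverse,'' but in the quadratic-form setup it is automatic rather than something to be engineered. Your operator-norm approach $\|\widetilde\H^{-1}(\widetilde\H-\H)\Delta_t\|$ can be made to work (e.g.\ via $\|\widetilde\H^{-1/2}\|\cdot\|\widetilde\H^{-1/2}(\widetilde\H-\H)\H^{-1/2}\|\cdot\|\H^{1/2}\Delta_t\|$), but it is more fiddly and only applies to the unconstrained case anyway.
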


We remark that Lemma~\ref{lem:general} is applicable to $F(\w_t)$ and $\tilde \H_t$ of any form.
In our case, specifically,
since $\nabla^2 F(\w_t) = \A^T \A + \Q$ and $\tilde \H_t = \A^T \S^T \S \A + \Q$ where $\S$ is the resulting sampling matrix.
\eqref{eq:cond1} is equivalent
\begin{equation}
   \label{eq:cond1p}
  \| (\A^T \S^T \S \A + \Q) - (\A^T \A + \Q) \| \leq \epsilon \|\A^T \A + \Q\|, 
\end{equation}
and due to Lemma~\ref{lem:equv_cond} in Appendix~\ref{sec:bplev_results}, \eqref{eq:cond2} is equivalent to
\begin{equation} 
 \label{eq:cond2p}
  -\epsilon (\A^T\A+\Q) \preceq (\A^T \S^T \S \A + \Q) - (\A^T \A + \Q) \preceq \epsilon (\A^T \A + \Q). 
\end{equation}
From this it is not hard to see that \eqref{eq:cond2} is strictly stronger than \eqref{eq:cond1}.
Also, in this case the Hessian approximation problem boils down to a matrix approximation problem. That is, given $\A$ and $\Q$, we want to construct a sampling matrix $\S$ efficiently such that the matrix $\A^T \A + \Q$ is well preserved.
As we mentioned, leverage scores sampling and block norm squares sampling are two popular ways for this task. In the next two subsections we will focus on the theoretical properties of these two schemes.

\subsection{Results for block partial leverage scores sampling}
\label{sec:lev}

\subsubsection{Construction}
\label{sec:lev_const}
Since the block partial leverage scores are defined as the standard leverage scores of some matrix, we can make use of the fast approximation algorithm for standard leverage scores. Specifically, apply a variant of the algorithm in~\citep{drineas2012fast} by using the sparse subspace embedding~\cite{clarkson13sparse} as the underlying sketching method to further speed up the computation.

\begin{theorem}
\label{lem:lev_const}
Given $\w$, under Assumption~\ref{assump:pos}, with high probability, it takes $t_{const} = \bigO(\nnz(\A) \log n)$ time to construct a set of approximate leverage scores $\{\hat\tau^\Q_i(\A)\}_{i=1}^n$ that satisfy $\tau^\Q_i(\A) \leq \hat\tau^\Q_i(\A) \leq \beta \cdot \tau^\Q_i(\A)$ where $\{\tau_i\}_{i=1}^n$ are the block partial leverage scores of $\H(\w) = \sum_{i=1}^n \H_i(\w) + \Q(\w)$ where $\A$ is the augmented matrix of $\{ \H_i(\w) \}_{i=1}^n$, and $\beta$ is a constant.
\end{theorem}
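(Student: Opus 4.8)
\textbf{Proof proposal for Theorem~\ref{lem:lev_const}.}

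The plan is to reduce the computation of block partial leverage scores to that of ordinary leverage scores of the concatenated matrix $\M = \begin{pmatrix} \A \\ \Q^{1/2}\end{pmatrix} \in \reals^{(kn+d)\times d}$, and then invoke a fast approximate leverage score algorithm. First I would recall that, by Definition~\ref{def:bplev}, $\tau^\Q_i(\A) = \sum_{j=k(i-1)+1}^{ki} \tau_j(\M)$, so it suffices to compute constant-factor approximations $\hat\tau_j$ to the ordinary leverage scores $\tau_j(\M)$ for the first $kn$ rows and then sum within each block; since $k$ is a constant (Assumption~\ref{assump:pos}), summing $k$ entries per block costs only $\bigO(n)$ and the multiplicative $\beta$-approximation is preserved under the block sum. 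Note that one does not need the leverage scores of the rows coming from $\Q^{1/2}$, only those of the $\A$-rows, but computing all of them is harmless.

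The core is the fast leverage score estimation. The standard recipe (Drineas et al.~\citep{drineas2012fast}) is: leverage scores of $\M$ are $\tau_j(\M) = \|\e_j^T \M (\M^T\M)^{\dagger/2}\|_2^2 = \|\e_j^T \M \R^{-1}\|_2^2$ where $\R$ comes from a QR-type factorization of $\M$; to avoid the $\bigO((kn+d)d^2)$ cost of an exact factorization, one replaces $\M$ by $\boldsymbol\Pi \M$ for a sparse subspace embedding $\boldsymbol\Pi$~\citep{clarkson13sparse} with $\bigO(d \,\mathrm{polylog})$ rows, so that $\boldsymbol\Pi\M$ has $\bigO(\mathrm{polylog})$ condition-number distortion relative to $\M$; this step costs $\bigO(\nnz(\M))$. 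Then one computes the (small) $d\times d$ factor $\tilde\R$ of $\boldsymbol\Pi\M$ in $\bigO(d^3\,\mathrm{polylog})$ time, and estimates $\|\e_j^T \M \tilde\R^{-1}\|_2^2$ for all $j$ by post-multiplying with a second Johnson--Lindenstrauss sketch $\G \in \reals^{d\times \bigO(\log n)}$, i.e. computing the matrix product $\M\,(\tilde\R^{-1}\G)$ and reading off squared row norms; this costs $\bigO(\nnz(\M)\log n + d^2\log n)$. Standard JL and subspace-embedding guarantees give, with high probability, $\tau_j(\M) \le \hat\tau_j \le \beta\,\tau_j(\M)$ for an absolute constant $\beta$ (after the usual rescaling of the estimate). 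Since $\nnz(\M) = \nnz(\A) + \nnz(\Q^{1/2}) = \bigO(\nnz(\A) + d^2)$ and $d^2 = \bigO(\nnz(\A))$ in the $n \gg d$ regime of interest (indeed $\nnz(\A) \ge$ number of nonzero blocks times $k \ge d$ generically, and the $d^2$ term is dominated), the total is $t_{const} = \bigO(\nnz(\A)\log n)$; summing within blocks adds only $\bigO(n) = \bigO(\nnz(\A))$.

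I would organize the write-up as: (1) state the reduction to ordinary leverage scores of $\M$ via Definition~\ref{def:bplev}; (2) quote the sparse-embedding-accelerated approximate leverage score routine and its $(\beta$-multiplicative, high-probability, $\bigO(\nnz(\M)\log n + d^3\mathrm{polylog})$-time$)$ guarantee; (3) absorb $d^2,d^3\,\mathrm{polylog}$ terms and the $\bigO(n)$ block-summation into $\bigO(\nnz(\A)\log n)$ under Assumption~\ref{assump:pos} and $n \gg d$. The main obstacle—more a matter of care than of depth—is bookkeeping the $\Q^{1/2}$ rows: one must verify that forming $\Q^{1/2}$ (or working implicitly with a factor of $\Q$, which is available since $\Q \succeq \mathbf 0$ and in the running GLM example $\Q = \lambda\mathbf I$) does not inflate $\nnz$ beyond $\bigO(d^2)$, and that the embedding/JL error is measured relative to the combined matrix $\M$ so that the resulting $\hat\tau_j$ genuinely bound the block partial leverage scores of $\H(\w)$, not merely those of $\A$ alone. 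A secondary subtlety is confirming that the constant $k$ and the $\bigO(n)$ summation are truly lower-order, which is immediate from Assumption~\ref{assump:pos} and the assumption $d^3\,\mathrm{polylog} = \bigO(\nnz(\A)\log n)$ implicit in the stated bound.
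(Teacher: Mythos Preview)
Your proposal is correct and follows essentially the same approach as the paper: reduce to ordinary leverage scores of the stacked matrix $\begin{pmatrix}\A\\\Q^{1/2}\end{pmatrix}$ and invoke the algorithm of~\citep{drineas2012fast} with the sparse subspace embedding of~\citep{clarkson13sparse} as the underlying sketch. The paper's own proof is in fact terser than yours---it simply cites these two ingredients and asserts the $\bigO(\nnz(\A)\log n)$ bound---so your explicit handling of the $\Q^{1/2}$ rows, the JL step, the block summation, and the absorption of the $d^{2},d^{3}\,\mathrm{polylog}$ terms fills in details the paper leaves implicit.
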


\subsubsection{Sampling size}
The following theorem indicates that if we sample the blocks of $\A$ based on block partial leverage scores with large enough sampling size, \eqref{eq:cond2p} holds with high probability.
\begin{theorem}
\label{thm:lev}
Given $\A$ with $n$ blocks, $\Q \succeq \mathbf{0}$ and $\epsilon \in (0,1)$, let $\{ \tau^\Q_i(\A) \}_{i=1}^n$ be its block partial leverage scores and $\{ \hat\tau^\Q_i(\A) \}_{i=1}^n$ be their overestimates, i.e., $ \hat\tau^\Q_i(\A) \ge \tau^\Q_i(\A), i = 1,...,n$.  Let $p_i = \frac{ \hat\tau^\Q_i(\A)}{\sum_{j=1}^n \hat\tau^\Q_j(\A)}$. Construct $\S\A$ by sampling the $i$-th block of $\A$ with probability $q_i = \min\{s\cdot p_i,1\}$ and rescaling it by $1/\sqrt{q_i}$. Then if 
\begin{equation}
  s \geq 4 \left( \sum_{i=1}^n \hat\tau_i^\Q(\A) \right) \cdot \log\frac{4d}{\delta} \cdot \frac{1}{\epsilon^2},
\end{equation}
with probability at least $1-\delta$, \eqref{eq:cond2p} holds, thus \eqref{eq:cond2} holds.
\end{theorem}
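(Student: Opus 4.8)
The plan is to reduce the block sampling statement to a matrix Chernoff / matrix concentration argument applied to the concatenated matrix $\M \defeq \begin{pmatrix}\A\\\Q^{1/2}\end{pmatrix}$. First I would observe that showing \eqref{eq:cond2p} for $\A^T\A+\Q = \M^T\M$ is exactly equivalent to showing $(1-\epsilon)\M^T\M \preceq \M^T \widetilde\S^T\widetilde\S\M \preceq (1+\epsilon)\M^T\M$, where $\widetilde\S$ samples the blocks of $\A$ and keeps the $\Q^{1/2}$ rows deterministically (with unit weight). Equivalently, after the standard change of basis $\x \mapsto (\M^T\M)^{1/2}\x$ (assuming $\M^T\M$ is invertible; otherwise work on its range, which contains the range of everything in sight), this becomes $\|\sum_i \X_i - \bbE\sum_i \X_i\| \le \epsilon$ for PSD matrices $\X_i = \frac{1}{q_i}\mathbf{1}[i\in\mathcal I]\,(\M^T\M)^{-1/2}\A_i^T\A_i(\M^T\M)^{-1/2}$ plus the deterministic contribution of the $\Q^{1/2}$ block, which together sum in expectation to the identity restricted to the range.

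The key steps, in order: (1) Set up the normalization and define the random matrices $\X_i$; note $\bbE[\sum_i \X_i] + (\text{deterministic }\Q\text{ part}) = \1_{\mathrm{range}}$. (2) Bound $\|\X_i\|$: the crucial point is that $\mathrm{tr}\big((\M^T\M)^{-1/2}\A_i^T\A_i(\M^T\M)^{-1/2}\big) = \sum_{j=k(i-1)+1}^{ki}\tau_j = \tau_i^\Q(\A)$, the block partial leverage score, so on the event $i$ is sampled, $\|\X_i\| \le \tfrac{1}{q_i}\tau_i^\Q(\A) \le \tfrac{1}{s p_i}\tau_i^\Q(\A) = \tfrac{\sum_j \hat\tau_j^\Q(\A)}{s}\cdot\tfrac{\tau_i^\Q(\A)}{\hat\tau_i^\Q(\A)} \le \tfrac{\sum_j\hat\tau_j^\Q}{s}$ using the overestimate property; when $q_i=1$ the block is deterministic and contributes nothing to the variance. (3) Apply a matrix Chernoff bound (e.g., Tropp's intrinsic-dimension or standard matrix Chernoff for sums of bounded PSD matrices) with norm bound $R = (\sum_j\hat\tau_j^\Q)/s$ and dimension parameter $d$ (or the intrinsic dimension of $\M^T\M$), which yields failure probability $\le 2d\exp(-\epsilon^2/(C R))$; setting $s \ge 4(\sum_j\hat\tau_j^\Q)\log(4d/\delta)/\epsilon^2$ makes this at most $\delta$. (4) Translate back: the two-sided operator inequality on the normalized matrices gives \eqref{eq:cond2p}, hence \eqref{eq:cond2} by Lemma~\ref{lem:equv_cond}.

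I would cite the matrix concentration inequality (Tropp) rather than reprove it, and handle the $q_i = \min\{sp_i,1\}$ truncation carefully, since the usual leverage-score-sampling proof needs the sampled terms to be genuinely rescaled by $1/q_i$ while the deterministically-kept blocks ($q_i=1$) and the fixed $\Q^{1/2}$ rows must be separated out so that only the random part is subject to concentration — this bookkeeping, plus making sure the expectation of the random plus deterministic parts exactly reconstructs $\M^T\M$, is the one place where an error is easy to make.

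The main obstacle I expect is step (2)/(4) bookkeeping rather than any deep inequality: namely, correctly identifying that the trace of the normalized block equals precisely the block partial leverage score $\tau_i^\Q(\A)$ (this is where Definition~\ref{def:bplev}'s use of the leverage scores of $\begin{pmatrix}\A\\\Q^{1/2}\end{pmatrix}$ is essential, so that $\Q$ is absorbed into the normalization), and cleanly separating the truncated/deterministic contributions so the matrix Chernoff bound applies to a sum of independent, uniformly-norm-bounded PSD matrices with the right expected value. Once that is in place, the sampling-size bound $s \ge 4(\sum_i\hat\tau_i^\Q(\A))\log(4d/\delta)/\epsilon^2$ falls directly out of the concentration inequality with the stated constants.
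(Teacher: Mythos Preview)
Your proposal is essentially the same approach as the paper's. The paper proves a standalone concentration result (Theorem~\ref{thm:bplev}) by writing $\bar\A = \begin{pmatrix}\A\\\Q^{1/2}\end{pmatrix} = \bar\U\R$, setting $\U_i = \A_i\R^{-1}$, observing $\tau_i^\Q(\A) = \tr(\U_i\U_i^T)$, and applying matrix \emph{Bernstein} to the centered variables to get $\|\U^T\S^T\S\U - \U^T\U\|\le\epsilon$; then Lemma~\ref{lem:equv_cond} converts to \eqref{eq:cond2}. Your normalization by $(\M^T\M)^{-1/2}$ is equivalent to the paper's $\R^{-1}$, and your trace identification of $\tau_i^\Q(\A)$ and handling of $q_i=1$ blocks are both correct.

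One small caveat: the right concentration tool here is matrix \emph{Bernstein} (as the paper uses), not matrix Chernoff. After normalization the expectation of the random part is $\bbE\!\sum_i\X_i = I - (\M^T\M)^{-1/2}\Q(\M^T\M)^{-1/2}$, which is generally \emph{not} the identity, so Chernoff's control of $\lambda_{\min},\lambda_{\max}$ relative to $\mu_{\min},\mu_{\max}$ of the expectation does not yield the uniform operator bound $\|\sum_i\X_i - \bbE\sum_i\X_i\|\le\epsilon$ that you need; and folding the deterministic $\Q$-block into the sum would force the norm parameter $R$ up to $\bigO(1)$, destroying the sample-size bound. Your stated failure probability $2d\exp(-\epsilon^2/(CR))$ is in fact the Bernstein form, so the substance of your argument is fine --- just invoke Bernstein rather than Chernoff and the constants match the theorem.
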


\noindent
{\bf Remark.}
When $\{ \tau^\Q_i(\A) \}_{i=1}^n$ are the exact scores, since $\sum_{i=1}^n \tau_i^\Q(\A) \leq \sum_{i=1}^{N+d} \tau_i (\bar \A) = d$ where $\bar \A = \begin{pmatrix} \A \\ \Q^\frac{1}{2}\end{pmatrix}$, the above theorem indicates that less than $\bigO(d\log d/\epsilon^2)$ blocks are needed for \eqref{eq:cond2p} to hold.


\subsection{Results for block norm squares sampling}
\label{sec:rnorms}

\subsubsection{Construction}
\label{sec:rnorms_const}

To sample based on block norm squares, one has first compute the Frobenius norm of every block in the augmented matrix $\A$. This requires $\bigO(\nnz(\A))$ time.

\begin{theorem}
\label{lem:rnorms_const}
Given $\w$, under Assumption~\ref{assump:pos}, it takes $t_{const} = \bigO(\nnz(\A))$ time to construct a block norm squares sampling distribution for $\H(\w) = \sum_{i=1}^n \H_i(\w) + \Q(\w)$ where $\A$ is the augmented matrix of $\{ \H_i(\w) \}_{i=1}^n$.

\end{theorem}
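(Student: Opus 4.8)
The plan is to account for the cost of computing the block norm squares sampling distribution $\{p_i\}_{i=1}^n$ with $p_i = \|\A_i\|_F^2 / \|\A\|_F^2$ for the augmented matrix $\A$ of $\{\H_i(\w)\}_{i=1}^n$. First I would note that, by Assumption~\ref{assump:pos}, for each $i$ the block $\A_i(\w) \in \reals^{k\times d}$ is explicitly available (a low-rank factor of $\H_i(\w)$), and that $\sum_{i=1}^n \nnz(\A_i) = \nnz(\A)$. Computing $\|\A_i\|_F^2$ requires summing the squares of the entries of $\A_i$, which takes $\bigO(\nnz(\A_i))$ time; summing these over all $i$ gives $\bigO(\nnz(\A))$ total time to obtain all the unnormalized weights $\{\|\A_i\|_F^2\}_{i=1}^n$, plus the normalizer $\|\A\|_F^2 = \sum_{i=1}^n \|\A_i\|_F^2$. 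Dividing through to form $\{p_i\}_{i=1}^n$ costs an additional $\bigO(n)$, which is dominated by $\bigO(\nnz(\A))$ since each of the $n$ blocks is nonempty (indeed $\A_i^T\A_i = \H_i(\w)$ is a genuine sub-Hessian).

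Next I would observe that forming the distribution is all that Theorem~\ref{lem:rnorms_const} claims: the theorem bounds only $t_{const}$, i.e. the time to construct the block norm squares distribution, and does not include the sampling step or the formation of $\widetilde\H$. (The cost of actually drawing $s$ samples and assembling $\widetilde\H$ is folded into $t_{solve}$ and is governed by the sampling size bound in Theorem~\ref{thm:rnorms}.) So the only arithmetic is the $\nnz(\A)$-time pass to compute the block Frobenius norms and the $\bigO(n) = \bigO(\nnz(\A))$ normalization.

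There is essentially no obstacle here — unlike the leverage scores case (Theorem~\ref{lem:lev_const}), which needs a sketching-based approximation and only holds with high probability, the block norm squares distribution can be computed exactly and deterministically by a single pass over the nonzero entries of $\A$. The one point worth stating carefully is that $\A$ need never be materialized as a single $kn\times d$ matrix: one works blockwise, using the per-block availability guaranteed by Assumption~\ref{assump:pos}, and $\nnz(\A)$ is interpreted as $\sum_i \nnz(\A_i)$. With that understanding the bound $t_{const} = \bigO(\nnz(\A))$ follows immediately. $\qed$
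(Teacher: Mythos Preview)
Your proposal is correct and matches the paper's argument: the paper's justification is the single sentence preceding the theorem, namely that computing the Frobenius norm of every block of $\A$ costs $\bigO(\nnz(\A))$, and your write-up simply spells this out with the additional bookkeeping for normalization. One small quibble: the paper actually folds the cost of drawing the $s$ samples into $t_{const}$ (Steps 4--8 of Algorithm~\ref{alg:main}), not $t_{solve}$, but since that step is $\bigO(n)\le\bigO(\nnz(\A))$ it does not affect the bound.
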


\subsubsection{Sampling size} The following theorem \citep{holodnak2015randomized} show the approximation error bound for Gram matrix. Here we extend it to our augmented matrix setting as follows,
\begin{theorem}[\cite{holodnak2015randomized}]
\label{thm:rnorms}
Given $\A$ with $n$ blocks, $\Q \succeq \mathbf{0}$ and $\epsilon \in (0,1)$, for $i=1,\ldots,n$, let $r_i = \|\A_i\|_F^2$. Let $p_i = \frac{r_i}{\sum_{j=1}^n r_j}$. Construct $\S\A$ by sampling the $i$-th block of $\A$ with probability $q_i = \min\{s\cdot p_i,1\}$ and rescaling it by $1/\sqrt{q_i}$. Then if 
\begin{equation}
  s \geq 4 {\bf sr}(\A) \cdot \log\frac{\min\{4{\bf sr}(\A),d\}}{\delta} \cdot \frac{1}{\epsilon^2},
\end{equation}
with probability at least $1-\delta$, \eqref{eq:cond1p} holds, thus \eqref{eq:cond1}.
\end{theorem}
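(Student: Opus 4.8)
The plan is to reduce the block-norm-squares sampling statement to the matrix Bernstein-type result that Holodnak–Kenney use for single-row norm-squared sampling of Gram products, applied to the augmented matrix $\bar\A = \begin{pmatrix}\A \\ \Q^{1/2}\end{pmatrix}$. Observe first that $\S\A^T\S^T\S\A + \Q = \bar\A^T\bar{\S}^T\bar{\S}\bar\A$ where $\bar{\S}$ is the sampling-and-rescaling operator that samples the $i$-th block of $\A$ exactly as in the theorem but deterministically keeps the rows of $\Q^{1/2}$. Thus condition~\eqref{eq:cond1p}, i.e.\ $\|(\A^T\S^T\S\A+\Q)-(\A^T\A+\Q)\|\le\epsilon\|\A^T\A+\Q\|$, becomes a statement purely about approximating the Gram matrix $\bar\A^T\bar\A$ by a (block) norm-squared sample; and since the $\Q^{1/2}$ rows are untouched, the sampling-induced error only involves the $\A$-blocks. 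So it suffices to bound $\|\A^T\S^T\S\A - \A^T\A\|$ by $\epsilon\|\bar\A^T\bar\A\| = \epsilon\|\A^T\A+\Q\|$, and since $\Q\succeq0$ we have $\|\A^T\A\|\le\|\A^T\A+\Q\|$, so it is enough to get $\|\A^T\S^T\S\A - \A^T\A\|\le\epsilon\|\A^T\A\|$.

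Next I would set up matrix Bernstein. Write $\A^T\S^T\S\A = \sum_{i=1}^n \xi_i\, \A_i^T\A_i/q_i$ with $\xi_i\sim\mathrm{Bernoulli}(q_i)$ (handling the $q_i=1$ blocks as deterministic, contributing zero variance), so that $\A^T\S^T\S\A - \A^T\A = \sum_i X_i$ with $X_i = (\xi_i/q_i - 1)\A_i^T\A_i$ zero-mean, symmetric. For the per-term bound: since $q_i = \min\{s p_i,1\}$ and $p_i = r_i/\sum_j r_j = \|\A_i\|_F^2/\|\A\|_F^2$, on the event $\xi_i=1$ we have $\|X_i\|\le \|\A_i^T\A_i\|/q_i \le \|\A_i\|_F^2/q_i \le \|\A\|_F^2/s = \mathbf{sr}(\A)\|\A\|_2^2/s$. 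For the variance: $\sum_i \Expect{X_i^2} \preceq \sum_i \frac{1}{q_i}\|\A_i^T\A_i\|\,\A_i^T\A_i \preceq \frac{\|\A\|_F^2}{s}\sum_i \A_i^T\A_i = \frac{\mathbf{sr}(\A)\|\A\|_2^2}{s}\,\A^T\A$, whence $\|\sum_i\Expect{X_i^2}\|\le \frac{\mathbf{sr}(\A)\|\A\|_2^4}{s}$. Feeding these into the matrix Bernstein inequality (intrinsic-dimension version to get the $\min\{\cdot,d\}$ in the log) and demanding the deviation be at most $\epsilon\|\A\|_2^2$ with failure probability $\delta$ yields exactly $s\ge 4\,\mathbf{sr}(\A)\log\frac{\min\{4\,\mathbf{sr}(\A),d\}}{\delta}\cdot\frac1{\epsilon^2}$, which is the claimed bound; this is essentially the statement of Holodnak–Kenney's theorem, and the theorem is explicitly attributed to them, so I would cite it and supply the augmented-matrix reduction as the only new piece.

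Finally, conditioning~\eqref{eq:cond1p} implies~\eqref{eq:cond1} immediately because $\widetilde\H(\w_t)-\H(\w_t) = \A^T\S^T\S\A+\Q - (\A^T\A+\Q)$, and $\|\H(\w_t)\| = \|\A^T\A+\Q\|$. The main obstacle — really the only subtlety beyond bookkeeping — is getting the logarithmic factor to read $\min\{4\,\mathbf{sr}(\A),d\}$ rather than a plain $d$: the naive matrix Bernstein bound has a $\log d$ (ambient dimension), so one needs the intrinsic-dimension / effective-rank refinement, using that the relevant "intrinsic dimension" $\mathrm{tr}(\A^T\A)/\|\A^T\A\| = \mathbf{sr}(\A)$ controls the bound; this is exactly why the cited Holodnak–Kenney result is phrased with that min, so invoking their theorem on $\bar\A$ sidesteps re-deriving it. A minor care point is the handling of blocks with $q_i=1$ (clipped probabilities): these are retained deterministically and contribute nothing to the martingale sum, so they are harmless, but it should be noted so that the Bernoulli sum is over the genuinely random blocks only.
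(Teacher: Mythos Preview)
The paper does not supply its own proof of this theorem: it attributes the core bound to Holodnak--Kenney and remarks only that the result ``extends to our augmented matrix setting,'' leaving the details implicit. Your proposal is correct and in fact does more than the paper, since you sketch the underlying matrix-Bernstein argument and spell out the (trivial) reduction: because $\Q\succeq \mathbf{0}$ one has $\|\A^T\A\|\le\|\A^T\A+\Q\|$, so the Holodnak--Kenney Gram-approximation bound $\|\A^T\S^T\S\A-\A^T\A\|\le\epsilon\|\A^T\A\|$ already yields \eqref{eq:cond1p}.

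One small remark: the augmented-matrix/$\bar\S$ framing at the start of your proposal is a detour, and your closing phrase ``invoking their theorem on $\bar\A$'' is not quite right. The cited result should be applied to $\A$, not $\bar\A$: the sampling probabilities $p_i=\|\A_i\|_F^2/\|\A\|_F^2$ are built from $\A$ alone, and the rows of $\Q^{1/2}$ are never sampled, so they do not belong in the Holodnak--Kenney input. Fortunately your actual Bernstein calculation uses only the $\A_i$'s and is correct as written; just drop the $\bar\A$ wrapper and go straight to ``it suffices to bound $\|\A^T\S^T\S\A-\A^T\A\|\le\epsilon\|\A^T\A\|$,'' which is exactly the cited theorem applied to $\A$.
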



\subsection{Discussion on the choice of solver}
\label{sec:solvers}
Here we discuss the effect of the choice of the solver $\mathcal{A}$ in Algorithm~\ref{alg:main}. Specifically, after an approximate Hessian $\tilde \H_t$ is constructed in Algorithm~\ref{alg:main}, we look at how various solvers affect $t_{solve}$ in~\eqref{eq:complexity}.
Since the approximate Hessian $\tilde \H_t$ is of the form $\A^T \S^T \S\A + \Q$ where $\S\A \in \reals^{s\times d}$, the complexity for solving the subproblem~\eqref{eq:sub} essentially depends on $s$ and $d$.
Given $\S\A$ and $\Q$, for ease of notation, we use
\begin{equation*}
 t_{solve} = \mathcal{T}(\mathcal{A},\C,s,d)
\end{equation*}
to denote the time it needs to solve the subproblem~\eqref{eq:sub} using solver $\mathcal{A}$.

For example, when the problem is unconstrained, i.e., $\C = \reals^d$, the subproblem~\eqref{eq:sub} reduces to a linear regression problem with size $s$ by $d$ and direct solver costs $\bigO(sd^2)$. Alternatively, one can use an iterative solver such as Conjugate Gradient (CG) to obtain an approximate solution. 
In this case, the complexity for solving the subproblem becomes $\bigO(sd\sqrt{\tilde\kappa_t}(\log \frac{1}{\epsilon_0} + \log \tilde{\kappa_t}))$ to produce an $\epsilon_0$ solution to~\eqref{eq:sub}  where $\tilde \kappa_t$ is the condition number of the problem.
It is not hard to see that CG is advantageous when the low dimension $d$ is large and the linear system is fairly well-conditioned.


There are also many solvers that are suitable. In Table~\ref{tab:solver}, we give a few examples for the unconstrained case ($\C = \reals^d$) by summarizing the complexity and the resulting approximation quality $\epsilon_0$ in~\eqref{eq:err_req}.

\begin{table}[H]
\centering
\begin{tabular}{c|cc|c}
\sc
solver $\mathcal A$ & $\mathcal{T}(\mathcal{A},\reals^d,s,d)$ & $\epsilon_0$ & reference
\\
\hline
direct & $\bigO(sd^2)$ & $0$ & NA \\
CG & $\bigO(sd\sqrt{\tilde\kappa_t} \log(1/\epsilon))$ & $\sqrt{\tilde\kappa_t} \epsilon$ & \citep{golub2012matrix}
\\
GD& $\bigO(sd \tilde\kappa_t \log(1/\epsilon))$&  $\epsilon$ & \citep[Theorem 2.1.15]{nesterov2013introductory}
\\
ACDM & $\bigO(s {\bf sr}(\S\A)\sqrt{\tilde\kappa_t}\log(1/\epsilon))$& $\sqrt{\tilde\kappa_t }\epsilon$ & \citep{lee2013efficient}
\end{tabular}
\caption{Comparison of different solvers for the subproblem. Here $\tilde\kappa_t = \lmax^\K(\tilde\H_t)/\lmin^\K(\tilde\H_t)$.}
\label{tab:solver}
\end{table}


\subsection{Complexities}
\label{sec:complexity}
Again, recall that in \eqref{eq:complexity} the complexity of the sub-sampled Newton methods can be expressed as $T \cdot (t_{const} + t_{grad} + t_{solve})$.
Combining the results from the previous few subsections, we have the following lemma characterizing the total complexity.

\begin{theorem}
\label{thm:complexity}
For Algorithm~\ref{alg:main} with sampling scheme $\mathcal{S}$ and solver $\mathcal{A}$, the total complexity is
\begin{equation*}
   T \cdot (t_{const} + \mathcal{T}(\mathcal{A},\C,s,d)),
\end{equation*}
and the solution error is specified in Lemma~\ref{lem:general}.
In the above, $t_{const}$ is specified in Theorem~\ref{lem:lev_const} and Theorem~\ref{lem:rnorms_const} and $s$ is specified in Theorem~\ref{thm:lev} and Theorem~\ref{thm:rnorms} depending on the choice of $\mathcal{S}$; $\mathcal{T}(\mathcal{A},\C,s,d)$ is discussed in Section~\ref{sec:solvers}.
\end{theorem}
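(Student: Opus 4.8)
The plan is to assemble Theorem~\ref{thm:complexity} directly from the component results already established, treating it essentially as a bookkeeping statement that substitutes into the cost decomposition~\eqref{eq:complexity}. First I would recall that Algorithm~\ref{alg:main} runs $T$ iterations, and in each iteration the work splits into three disjoint pieces: computing the full gradient $\g(\w_t)$ (cost $t_{grad}$), constructing the sampling distribution $\{p_i\}_{i=1}^n$ and drawing the $s$ sampled blocks (cost $t_{const}$), and forming $\tilde\H_t$ implicitly together with (inexactly) solving the subproblem~\eqref{eq:sub} (cost $t_{solve}$). Summing over iterations gives $T\cdot(t_{grad}+t_{const}+t_{solve})$, which is~\eqref{eq:complexity}; since the theorem folds $t_{grad}$ into the statement implicitly (it is not affected by $\mathcal{S}$ or $\mathcal{A}$ and, per Assumption~\ref{assump:pos}, is $\bigO(\nnz(\A))$), the claimed form $T\cdot(t_{const}+\mathcal{T}(\mathcal{A},\C,s,d))$ follows once we identify $t_{solve}$ with $\mathcal{T}(\mathcal{A},\C,s,d)$.

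Next I would justify each of the three substitutions. For $t_{const}$: if $\mathcal{S}$ is block partial leverage score sampling, Theorem~\ref{lem:lev_const} gives $t_{const}=\bigO(\nnz(\A)\log n)$ with high probability, and the overestimates $\hat\tau^\Q_i(\A)$ it produces satisfy the hypotheses of Theorem~\ref{thm:lev}; if $\mathcal{S}$ is block norm squares sampling, Theorem~\ref{lem:rnorms_const} gives $t_{const}=\bigO(\nnz(\A))$. For the sampling size $s$: Theorem~\ref{thm:lev} shows $s=\bigO\!\big((\sum_i\hat\tau^\Q_i(\A))\log(4d/\delta)/\epsilon^2\big)$ suffices for~\eqref{eq:cond2p}, hence~\eqref{eq:cond2}; Theorem~\ref{thm:rnorms} shows $s=\bigO\!\big({\bf sr}(\A)\log(\min\{4{\bf sr}(\A),d\}/\delta)/\epsilon^2\big)$ suffices for~\eqref{eq:cond1p}, hence~\eqref{eq:cond1}. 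For $t_{solve}$: once $\tilde\H_t=\A^T\S^T\S\A+\Q$ is available with $\S\A\in\reals^{s\times d}$, Section~\ref{sec:solvers} defines $\mathcal{T}(\mathcal{A},\C,s,d)$ as exactly the time to (inexactly) solve~\eqref{eq:sub}, and records concrete instantiations (direct, CG, GD, ACDM) in Table~\ref{tab:solver}, each with its associated inexactness parameter $\epsilon_0$.

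Finally I would invoke Lemma~\ref{lem:general} for the solution-error claim: whenever the approximate Hessian satisfies~\eqref{eq:cond1} or~\eqref{eq:cond2} (guaranteed by the sampling-size choices above) and $\w_0$ satisfies $\|\w_0-\w^\ast\|\le \mu/(4L)$, the iterates obey the linear-quadratic recursion~\eqref{eq:rate0} (exact solve) or~\eqref{eq:rate0_inexact} (inexact solve, with the $\epsilon_0$ coming from the chosen solver in Table~\ref{tab:solver}), with $C_q,C_l$ given by~\eqref{eq:rate1} or~\eqref{eq:rate2}. This is precisely ``the solution error is specified in Lemma~\ref{lem:general},'' completing the theorem. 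Care is needed only to track the high-probability qualifiers: the leverage-score construction in Theorem~\ref{lem:lev_const} and the spectral-approximation guarantees in Theorems~\ref{thm:lev} and~\ref{thm:rnorms} each hold with high probability, so a union bound over the $T$ iterations (absorbing a $\log T$ or adjusting $\delta\to\delta/T$) is what actually makes the per-iteration bounds valid simultaneously.

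The main obstacle is not any single hard inequality but rather making the composition clean: one must verify that the approximate leverage scores output by Theorem~\ref{lem:lev_const} are admissible overestimates for Theorem~\ref{thm:lev} (they are, up to the constant $\beta$, which only inflates $s$ by a constant factor), and that the inexactness $\epsilon_0$ delivered by the solver is compatible with the form assumed in~\eqref{eq:err_req} so that~\eqref{eq:rate0_inexact} applies — in particular that a solver returning an $\epsilon$-accurate solution in the norm induced by $\tilde\H_t$ translates into the $\|\cdot\|_2$-relative guarantee of~\eqref{eq:err_req} at the cost of a factor $\sqrt{\tilde\kappa_t}$, as already reflected in the $\epsilon_0$ column of Table~\ref{tab:solver}. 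Once these interface conditions are checked, the proof is a direct substitution.
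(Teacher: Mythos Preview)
Your proposal is correct and matches the paper's treatment: the paper does not give a separate proof of Theorem~\ref{thm:complexity} at all, introducing it simply with ``Combining the results from the previous few subsections, we have the following lemma characterizing the total complexity,'' so the theorem is intended exactly as the bookkeeping substitution into~\eqref{eq:complexity} that you spell out. Your additional remarks on the union bound over $T$ iterations and on the interface between Theorem~\ref{lem:lev_const}'s overestimates and Theorem~\ref{thm:lev}'s hypotheses are more careful than anything the paper makes explicit, but they are consistent with and in the spirit of the paper's argument.
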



Indeed, Lemma~\ref{lem:general} implies that the sub-sampled Newton method inherits a local constant linear convergence rate. This can be shown by choosing specific values for $\epsilon$ and $\epsilon_0$ in Lemma~\ref{lem:general}. The results are presented in the following corollary.

\begin{corollary}
\label{cor:complexity}
Suppose $\C = \reals^d$. In Algorithm~\ref{alg:main}, assume that $CG$ is used to solve the subproblem~\eqref{eq:sub}. Then under Assumption~\ref{assump:pos}, 
\begin{itemize}
\item
if block partial leverage scores sampling is used, the complexity per iteration in the local phase is
\begin{equation}
\tilde\bigO(\nnz(\A)\log n + d^2\kappa^{3/2});
\end{equation}
\item
if block norm squares sampling is used, the complexity per iteration in the local phase is
\begin{equation}
\tilde\bigO(\nnz(\A) + {\bf sr}(\A)d\kappa^{5/2}),
\end{equation}
\end{itemize}
and the solution error satisfies
\begin{equation}
 \label{eq:linear_rate}
     \|\w_{t+1} - \w^\ast \| \leq \rho \cdot \|\w_t - \w^\ast \|,
\end{equation} 
where $\rho \in(0, 1)$ is a constant.\footnote{In this paper, $\tilde\bigO(\cdot)$ hides logarithmic factors of $d$, $\kappa$ and $1/\delta$.}
\end{corollary}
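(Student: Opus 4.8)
The plan is to specialize the general machinery already assembled in the paper: Lemma~\ref{lem:general} supplies the solution-error recursion, Theorems~\ref{lem:lev_const} and~\ref{lem:rnorms_const} supply $t_{const}$, Theorems~\ref{thm:lev} and~\ref{thm:rnorms} supply the sampling size $s$, and the CG row of Table~\ref{tab:solver} supplies $t_{solve}=\mathcal{T}(\mathrm{CG},\reals^d,s,d)$ together with the induced $\epsilon_0$. The whole proof is a matter of choosing $\epsilon$ (the Hessian-approximation accuracy) and $\epsilon_0$ (the inner-solver accuracy) as small absolute constants depending only on $\kappa$, verifying that the resulting $C_q,C_l,\epsilon_0$ make the recursion~\eqref{eq:rate0_inexact} a contraction with some fixed $\rho<1$ once $\|\w_t-\w^\ast\|$ is small, and then multiplying out the per-iteration cost.

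First I would fix the parameters. For block partial leverage scores the relevant constants are~\eqref{eq:rate2}: $C_q=\Theta(L/\mu)$ and $C_l=\Theta(\epsilon\sqrt\kappa)$; so choosing $\epsilon=\Theta(1/\sqrt\kappa)$ makes $C_l$ a small constant, and by Theorem~\ref{thm:lev} the sampling size is $s=\tilde\bigO(d/\epsilon^2)=\tilde\bigO(d\kappa)$ (using $\sum_i\tau_i^\Q(\A)\le d$). Running CG on an $s\times d$ system costs $\bigO(sd\sqrt{\tilde\kappa_t}\log(1/\epsilon))$; because $\tilde\H_t$ is a spectral approximation of $\H(\w_t)$ one shows $\tilde\kappa_t=\bigO(\kappa)$ (via Assumption~\ref{assump:1} to pass from $\H(\w^\ast)$ to $\H(\w_t)$ for $\w_t$ near $\w^\ast$, plus~\eqref{eq:cond2p}), and the CG accuracy must be pushed to $\epsilon_0=\Theta(1)$ small, i.e. the raw CG tolerance $\epsilon_{\mathrm{CG}}=\epsilon_0/\sqrt{\tilde\kappa_t}=\Theta(1/\sqrt\kappa)$, costing an extra $\log\kappa$ hidden in $\tilde\bigO$. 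Hence $t_{solve}=\tilde\bigO(sd\sqrt\kappa)=\tilde\bigO(d^2\kappa^{3/2})$, and adding $t_{const}=\bigO(\nnz(\A)\log n)$ gives the first bound $\tilde\bigO(\nnz(\A)\log n+d^2\kappa^{3/2})$. The block norm squares case is identical except one uses~\eqref{eq:rate1}, where $C_q=\Theta(L/\mu)$ and $C_l=\Theta(\epsilon\kappa)$, forcing $\epsilon=\Theta(1/\kappa)$; Theorem~\ref{thm:rnorms} then gives $s=\tilde\bigO(\mathbf{sr}(\A)/\epsilon^2)=\tilde\bigO(\mathbf{sr}(\A)\kappa^2)$, CG costs $\bigO(s\,\mathbf{sr}(\S\A)\sqrt{\tilde\kappa_t}\cdot)$ — or more simply $\bigO(sd\sqrt\kappa)$ — yielding $t_{solve}=\tilde\bigO(\mathbf{sr}(\A)d\kappa^{5/2})$ and $t_{const}=\bigO(\nnz(\A))$, i.e. the second bound.

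Next I would close the convergence argument: with these constant choices, $(1+\epsilon_0)C_l+\epsilon_0\le\rho_0$ for some $\rho_0<1$, and since the initialization hypothesis $\|\w_0-\w^\ast\|\le\mu/(4L)$ from Lemma~\ref{lem:general} controls the quadratic term, $(1+\epsilon_0)C_q\|\w_t-\w^\ast\|\le\tfrac14$ holds inductively (the recursion is contractive so $\|\w_t-\w^\ast\|$ never grows); adding the two pieces gives~\eqref{eq:linear_rate} with $\rho=\rho_0+\tfrac14<1$ after possibly shrinking $\epsilon,\epsilon_0$ so that $\rho_0<\tfrac34$. Finally, note $L/\mu$ and $\kappa=\nu/\mu$ are related but not equal; since $\mu,\nu,L$ are all problem constants, $C_q\|\w_0-\w^\ast\|=\bigO(1)$, so this is subsumed into ``$\rho$ a constant.''

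The main obstacle — really the only nontrivial point beyond bookkeeping — is controlling $\tilde\kappa_t$, the condition number of the \emph{sampled} Hessian that drives the CG cost. One must argue that (a) near the optimum, $\H(\w_t)$ is within a constant factor of $\H(\w^\ast)$ in the relevant $\K$-restricted sense, using Lipschitz continuity (Assumption~\ref{assump:1}) and $\|\w_t-\w^\ast\|\le\mu/(4L)$; and (b) the spectral guarantee~\eqref{eq:cond2p} (resp.~\eqref{eq:cond1p}) transfers this to $\tilde\H_t$ up to the factor $1\pm\epsilon$, which is $\Theta(1)$ for our choice of $\epsilon$. Together these give $\tilde\kappa_t=\bigO(\kappa)$, which is exactly what is needed to turn the $\sqrt{\tilde\kappa_t}$ in the CG bound into the advertised $\kappa^{3/2}$ and $\kappa^{5/2}$. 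Everything else is substitution into~\eqref{eq:complexity} and absorbing $\log d,\log\kappa,\log(1/\delta)$ factors into $\tilde\bigO(\cdot)$.
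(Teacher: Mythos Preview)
Your proposal is correct and follows essentially the same approach as the paper: pick $\epsilon=\Theta(1/\sqrt\kappa)$ (resp.\ $\Theta(1/\kappa)$) and a constant $\epsilon_0$, plug into the recursion~\eqref{eq:rate0_inexact} with the initialization bound $\|\w_t-\w^\ast\|\le\mu/(4L)$ to get a fixed $\rho<1$, then read off $s$ from Theorems~\ref{thm:lev} and~\ref{thm:rnorms} and $t_{solve}$ from the CG row of Table~\ref{tab:solver}. If anything, you are more careful than the paper's own proof about justifying $\tilde\kappa_t=\bigO(\kappa)$; the paper simply asserts this as a consequence of being in the local region.
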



\subsection{Comparisons}
\label{sec:comparison}

\subsubsection{Comparison between different sampling schemes}
\label{sec:unif_comp}
As discussed above, the sampling scheme $\mathcal{S}$ plays a crucial role in sub-sampled Newton methods. Here, we compare the two proposed non-uniform sampling schemes, namely, block partial leverage scores sampling and block norm squares sampling, with uniform sampling. SSN with uniform sampling was discussed in~\citep{roosta2016sub1}. For completeness, we state the sampling size bound for uniform sampling. Note that, this upper bound for $s$ is tighter than the original analysis in~\citep{roosta2016sub1}.

\begin{theorem}
\label{thm:unif}
Given $\A$ with $n$ blocks, $\Q \succeq \mathbf{0}$ and $\epsilon \in (0,1)$, construct $\S\A$ by uniform sampling $s$ blocks from $\A$ and rescaling it by $\sqrt{n/s}$. Then if 
\begin{equation}
 \label{eq:s_unif}
  s \geq 4n \cdot \frac{\max_i \|\A_i\|^2}{\|\A\|^2} \cdot \log \frac{d}{\delta} \cdot \frac{1}{\epsilon^2},
\end{equation}
with probability at least $1-\delta$, \eqref{eq:cond1p} holds, thus \eqref{eq:cond1} holds.
\end{theorem}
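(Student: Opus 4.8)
The plan is to reduce the statement to a standard matrix concentration bound, in the same way that Theorems~\ref{thm:lev} and~\ref{thm:rnorms} are proved, and then to specialize the generic sampling-size requirement to the uniform distribution $p_i = 1/n$. First I would recall that uniform sampling with rescaling by $\sqrt{n/s}$ produces an \emph{unbiased} estimator of the Gram product: writing $\S^T\S = \mathrm{diag}(\ldots)$ with each block selected independently, one has $\bbE[\A^T\S^T\S\A] = \A^T\A$, since each block $\A_i^T\A_i$ is included with probability $s/n$ and rescaled by $n/s$. (More precisely, using the $q_i = \min\{s/n,1\}$ convention of Algorithm~\ref{alg:main} this is exact when $s \le n$; the boundary case $q_i = 1$ only helps.) Therefore $\tilde\H_t - \H(\w_t) = \A^T\S^T\S\A - \A^T\A$ is a sum of $n$ independent, mean-zero random matrices $\X_i$, where $\X_i = (n/s)\xi_i \A_i^T\A_i - (s/n)\cdot(n/s)\A_i^T\A_i$ with $\xi_i \sim \mathrm{Bernoulli}(s/n)$, suitably scaled. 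The goal~\eqref{eq:cond1p}, namely $\|\A^T\S^T\S\A - \A^T\A\| \le \epsilon\|\A^T\A + \Q\| $, follows a fortiori from $\|\A^T\S^T\S\A - \A^T\A\| \le \epsilon\|\A^T\A\|$ since $\Q \succeq \mathbf{0}$, so it suffices to control the spectral norm of $\sum_i \X_i$ relative to $\|\A^T\A\| = \|\A\|^2$.

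The key step is a matrix Bernstein (or matrix Chernoff) inequality. I would bound the two quantities it requires. For the per-summand norm bound: each $\X_i$ has $\|\X_i\| \le (n/s)\|\A_i^T\A_i\| = (n/s)\|\A_i\|^2 \le (n/s)\max_j\|\A_j\|^2$. For the variance: $\sum_i \bbE[\X_i^2] \preceq (n/s)\sum_i (s/n)(1 - s/n)(n/s)^2 (\A_i^T\A_i)^2 \preceq (n/s)\big(\max_j\|\A_j\|^2\big)\sum_i \A_i^T\A_i = (n/s)\big(\max_j\|\A_j\|^2\big)\A^T\A$, using $(\A_i^T\A_i)^2 \preceq \|\A_i\|^2\,\A_i^T\A_i$. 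Hence the variance parameter is at most $(n/s)\max_j\|\A_j\|^2 \cdot \|\A\|^2$. Plugging both into matrix Bernstein and asking for deviation $\le \epsilon\|\A\|^2$ with failure probability $\delta$ over a $d$-dimensional matrix yields, after the standard simplification, the requirement $s \ge c\, n\,\frac{\max_j\|\A_j\|^2}{\|\A\|^2}\log\frac{d}{\delta}\cdot\frac{1}{\epsilon^2}$ for an absolute constant $c$; tracking constants carefully gives $c = 4$, matching~\eqref{eq:s_unif}. Finally, invoking the equivalence of~\eqref{eq:cond1p} and~\eqref{eq:cond1} established in Section~\ref{subsec: conditions} completes the proof.

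The main obstacle I anticipate is purely bookkeeping rather than conceptual: getting the constant down to exactly $4$ requires a careful version of the matrix concentration bound (the intrinsic-dimension / effective-rank refinement, or a direct matrix Chernoff argument applied to $I - (\A^T\A)^{-1/2}\A^T\S^T\S\A(\A^T\A)^{-1/2}$ restricted to the relevant subspace), and one must be slightly careful that the quantity $\max_j\|\A_j\|^2/\|\A\|^2$ is exactly the right ``coherence'' parameter that replaces the stable rank ${\bf sr}(\A)$ appearing in the leverage-score and block-norm results. A secondary minor point is handling the truncation $q_i = \min\{s/n, 1\}$ cleanly so that unbiasedness is not lost; this is standard and only improves the bound, so I would relegate it to a remark. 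Since an essentially identical argument already underlies Theorem~\ref{thm:rnorms} (cited from~\cite{holodnak2015randomized}), I expect the proof to be short, with the novelty being only the claim that this bound is tighter than the one originally stated in~\citep{roosta2016sub1}.
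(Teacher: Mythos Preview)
Your proposal is correct and follows essentially the same route as the paper: reduce to $\|\A^T\S^T\S\A - \A^T\A\| \le \epsilon\|\A^T\A\|$ (which suffices since $\Q \succeq \mathbf{0}$), then apply matrix Bernstein with the per-term bound $\lesssim (n/s)\max_j\|\A_j\|^2$ and the variance bound obtained from $(\A_i^T\A_i)^2 \preceq \|\A_i\|^2\,\A_i^T\A_i$. The only cosmetic difference is that the paper phrases the argument via the sampling-with-replacement model $\X_j = \tfrac{1}{s}(n\A_i^T\A_i - \A^T\A)$, $j=1,\ldots,s$, rather than the Bernoulli model you use; the bounds and the final constant $4$ coincide.
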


This result allows us to compare the three sampling schemes in terms of the three main complexities, i.e., $t_{const}$, $t_{solve}$ and $T$ (manifested in $C_q$ and $C_l$), as shown in Table~\ref{tab: comp} (identical to Table~\ref{tab: results} in Section~\ref{sec:intro}).
Note here, to evaluate the effect of the sampling scheme $\mathcal{S}$ only, we assume a direct solver is used for the subproblem~\eqref{eq:sub} since in this case $t_{solve}$ is directly controlled by the sampling size $s$, independent of the solver $\mathcal{A}$.
Also, for simplicity, we assume that $\mathcal{C} = \reals^d$. The analysis is similar for general cases.
In Table~\ref{tab: comp}, $C_q$ and $C_l$ are defined based on two problem properties $\kappa$ and $\tilde \kappa$:
\begin{equation}
\label{eq:constants}
\tilde \kappa = L/\mu, \quad \kappa = \nu/\mu,
\end{equation}
where constants $L, \mu, \nu$ are defined in Assumptions~\ref{assump:1} and~\ref{assump:2}. Also, throughout this subsection, for randomized algorithms, we choose parameters such that the failure probability is a constant.

\begin{tableprime}{tab: results}
\centering
\begin{tabular}{c|c|c|cc}
\sc 
Name & $t_{const}$ & $t_{solve} = sd^2$ & $C_q$ & $C_l$   \\
\hline
Newton's method & $0$ & $\bigO(nd^2)$ & $\tilde \kappa$ & $0$ \\
SSN (leverage scores) & $\bigO(\nnz(\A) \log n)$ & $\tilde \bigO((\sum_i\tau_i^\Q(\A)) d^2/\epsilon^2)$ & $\frac{\tilde \kappa}{1-\epsilon}$ & $\frac{\epsilon\sqrt{\kappa}}{1-\epsilon} $    \\
SSN (block norm squares) & $\bigO(\nnz(\A))$ & $\tilde \bigO({\bf sr}(\A)d^2/\epsilon^2)$ & $\frac{\tilde \kappa}{1-\epsilon\kappa} $ & $\frac{\epsilon\kappa}{1-\epsilon\kappa}$  \\
SSN (uniform) & $\bigO(1)$ &  $\tilde\bigO\left(nd^2\frac{\max_i \|\A_i\|^2}{ \|\A\|^2}/\epsilon^2\right)$ & $\frac{\tilde \kappa}{1-\epsilon\kappa} $ & $\frac{\epsilon\kappa}{1-\epsilon\kappa}$  
\end{tabular}
\caption{Comparison between standard Newton's method and sub-sampled Newton methods (SSN) with different sampling schemes. In the above, $C_q$ and $C_l$ are the constants achieved in~\eqref{eq:rate0}; $\kappa$ and $\tilde \kappa$ are defined in~\eqref{eq:constants}; $\A \in \reals^{\bigO(n)\times d}$ is the augmented matrix in the current iteration (Definition~\ref{def:aug}) that satisfies $\A^T \A = \sum_{i=1}^n \H_i(\w_t)$;
${\bf sr}(\A)$ is the stable rank of $\A$ satisfying ${\bf sr}(\A) \leq d$; $\nnz(\A)$ denote the number of non-zero elements in $\A$.
Note here, to remove the effect of solver $\mathcal{A}$, we assume the subproblem~\eqref{eq:sub0} is solved exactly. Also, we assume the problem is unconstrained ($\C = \reals^d$) so that $t_{solve} = sd^2$.
}
\label{tab: comp}
\end{tableprime}
  
As can be seen in Table~\ref{tab: comp}, the greatest advantage of uniform sampling scheme comes from its simplicity of construction. On the other hand, as discussed in Sections~\ref{sec:lev_const} and \ref{sec:rnorms_const}, it takes nearly input sparsity time to construct the leverage scores sampling distribution or the block norm squares sampling distribution.
When it comes to the sampling size $s$ for achieving \eqref{eq:cond1p} or \eqref{eq:cond2p}, as suggested in~\eqref{eq:s_unif}, the one for uniform sampling can become $\Omega(n)$ when $\A$ is very non-uniform, i.e., $\max_i \|\A_i\| \approxeq \|\A\|$. It can be shown that for a given $\epsilon$, block norm squares sampling requires the smallest sampling size which leads to the smallest value of $t_{solve}$ in Table~\ref{tab: comp}.

It is worth pointing that, although either \eqref{eq:cond1p} or \eqref{eq:cond2p} is sufficient to yield a local linear-quadratic convergence rate, as \eqref{eq:cond2p} is essentially a stronger condition, it has better constants, i.e., $C_q$ and $C_l$.
This fact is reflected in Table~\ref{tab: comp}. The constants $C_q$ and $C_l$ for leverage scores sampling have a better dependence on the local condition number $\kappa$ than the other two schemes since leverage scores sampling yields a sampling matrix that satisfies the spectral approximation guarantee~\eqref{eq:cond2p}. 
In fact, this difference can dramatically affect the performance of the algorithm when dealing with ill-conditioned problems.
This is verified by numerical experiments; see Figure~\ref{fig: lambdas} in Section~\ref{sec: experiments} for details.

\noindent
{\bf Remark.} Note that all the analysis including error recursion (Lemma~\ref{lem:general}) and the required sampling size $s$ for different sampling schemes are provided as upper bounds. There will be cases that the sampling size bound indicates a large value for $s$, in fact a much smaller sampling size $s$ suffices to yield good performance. 
For example, when the leverage scores are equal or close to a uniform distribution, the actual required sampling size for uniform sampling scheme is much less than~\eqref{eq:s_unif}. 


\subsubsection{Comparison between various methods}
\label{sec:complexity_comp}

Next, we compare our main algorithm with other stochastic second-order methods including~\cite{roosta2016sub1, agarwal2016second}. Since these essentially imply a constant linear convergence rate, i.e.,
\begin{equation}
 \label{eq:const_lin}
     \|\w_{t+1} - \w^\ast \| \leq \rho \cdot \|\w_t - \w^\ast \|,  ~~0 < \rho < 1,
\end{equation} 
we compare the complexity per iteration needed in each algorithm when such a rate~\eqref{eq:const_lin} is desired.
Note here, for the ease of comparison, we assume $\C = \reals^d$, $R(\w) = 0$, and CG is used for solving sub-problems in SSN so that the complexities can be easily expressed. This is the same setting as in \cite{agarwal2016second}.\footnote{In \cite{agarwal2016second}, the authors also considered the ridge penalty term but they absorb the penalty term into the summation which still makes the objective in the form of an average/sum of functions.} Analysis for general cases is similar.

Note that the results in the related works are stated in terms of condition numbers that are defined differently from the local condition number (Assumption~\ref{assump:2}) used in this paper.
To be precise, besides the standard definition, i.e., $\kappa$, for any $\w \in \reals^d$, define
\begin{subequations}
\label{eq:cond_nums}
\begin{eqnarray}
\kappa(\w) &=& \dfrac{\lmax(\sum_{i=1}^n \H_i(\w))}{\lmin(\sum_{i=1}^n \H_i(\w))}, \\
\hat\kappa(\w) &=& n \cdot \frac{\max_i \lmax(\H_i(\w))}{\lmin(\sum_{i=1}^n \H_i(\w))}, \\
\bar \kappa(\w) &=& \frac{\max_i \lmax(\H_i(\w))}{\min_i \lmin(\H_i(\w))}.
\end{eqnarray}
\end{subequations}


An immediate relationship between the three condition numbers is 
$
 \kappa(\w) \le \hat\kappa(\w)\le \bar \kappa(\w).
$
The connections between these condition numbers depend on the properties of $\H_i(\w)$. Roughly speaking, when all $\H_i(\w)$'s are ``close'' to each other, then $\lmax^\K(\sum_{i=1}^n \H_i(\w)) \approx \sum_{i=1}^n \lmax^\K(\H_i(\w)) \approx n \cdot \max_i \lmax^\K(\H_i(\w))$, and thus $\kappa \approx \hat\kappa$. And similarly, $\kappa \approx \bar\kappa$. While in many cases, some $\H_i(\w)$'s can be very different from the rest. For example, when solving linear regression, the Hessian $\H(\w) = \A^T\A$, where $\A$ is the data matrix with each row as a data point. When the rows are not very uniform, it can be the case that $\kappa$ is smaller than $\hat\kappa$ and $\bar \kappa$ by a a factor of $n$.


\begin{tableprime}{tab: complexity}
\centering
\begin{tabular}{c|c|c}
\sc 
name & \sc complexity per iteration & \sc reference \\
\hline
Newton-CG method & $\tilde \bigO(\nnz(\A)\sqrt{\kappa})$ & \cite{nocedal2006numerical} \\
SSN (leverage scores) & $\tilde \bigO(\nnz(\A)\log n + d^2\kappa^{3/2})$ & {\bf This paper} \\
SSN (block norm squares) &  $\tilde \bigO(\nnz(\A) + {\bf sr}(\A)d\kappa^{5/2} )$ & {\bf This paper} \\
Newton Sketch (SRHT) & $\tilde\bigO(nd (\log n)^4 + d^2(\log n)^4 \kappa^{3/2})$ & \cite{pilanci2015newton} 
\\
SSN (uniform) & $\tilde\bigO(\nnz(\A) + d\hat\kappa \kappa^{3/2})$ & \cite{roosta2016sub2} \\
LiSSA & $\tilde\bigO(\nnz(\A) + d \hat\kappa \bar \kappa^2)$ & \cite{agarwal2016second}
\end{tabular}
\caption{
Complexity per iteration of different methods to obtain a problem independent local linear convergence rate. The quantities $\kappa$, $\hat \kappa$, and $\bar \kappa$ are the local condition numbers, defined in~\eqref{eq:cond_nums} at the optimum $\w^\ast$, satisfying 
$\kappa \leq \hat\kappa \leq \bar\kappa$; $\A \in \reals^{\bigO(n)\times d}$ is the augmented matrix in the current iteration (Definition~\ref{def:aug}) that satisfies $\A^T \A = \sum_{i=1}^n \H_i(\w_t)$; ${\bf sr}(\A)$ is the stable rank of $\A$ satisfying ${\bf sr}(\A) \leq d$; $\nnz(\A)$ denote the number of non-zero elements in $\A$.
Note here, for the ease of comparison, we assume $\C = \reals^d$, $R(\w) = 0$, and CG is used for solving sub-problems in SSN so that the complexities can be easily expressed.
}
\label{tab: complexity2}
\end{tableprime}

Given the notation we defined, we summarize the complexities of different algorithms in Table~\ref{tab: complexity2} (identical to Table~\ref{tab: complexity} in Section~\ref{sec:intro}) including Newton's methods with CG solving the subproblem.
One immediate conclusion we can draw is that compared to Newton's methods, these stochastic second-order methods trade the coefficient of the leading term $\bigO(nd)$ with some lower order terms that only depend on $d$ and condition numbers (assuming $\nnz(\A) \approx nd$). Therefore, one should expect these algorithm to perform well when $n \gg d$ and the problem is fairly well-conditioned.

Although SSN with non-uniform sampling has a quadratic dependence on $d$, its dependence on the condition number is better than the other methods. There are two main reasons.
First, the total power of the condition number is lower, regardless the versions of the condition number needed.
Second, SSN (leverage scores) and SSN (block norm squares) only depend on $\kappa$ which can be significantly lower than the other two definitions of condition number according to the discussion above.
Overall, SSN (leverage scores) is more robust to the ill-conditioned problems.

\section{Numerical Experiments}\label{sec: experiments}


We consider an estimation problem in GLMs with Gaussian prior. Assume $\X\in \reals^{n\times d}, \Y \in \mathcal{Y}^n$ are the data matrix and response vector. The problem of minimizing the negative log-likelihood with ridge penalty can be written as 
\vspace{-5mm}
\begin{equation}
\min_{\w\in \mathcal \reals^d}  \: \sum_{i=1}^n \psi(\x_i^T\w, y_i) + \lambda \|\w\|_2^2,\nonumber
\end{equation}
where $\psi: \reals \times \mathcal Y \rightarrow \reals $ is a convex cumulant generating function and $\lambda \ge 0$ is the ridge penalty parameter.
In this case, the Hessian is $\H(\w) = \sum_{i=1}^n \psi^{''}(\x_i^T\w, y_i) \x_i \x_i^T + \lambda \mathbf{I} \defeq \X^T \D^2(\w) \X + \lambda \mathbf{I}$, where $\x_i$ is $i$-th column of $\X^T$ and $\D(\w)$ is a diagonal matrix with the diagonal $[\D(\w)]_{ii} = \sqrt {\psi^{''}(\x_i^T\w, y_i)}$.
 The augmented matrix of $\{\A_i(\w)\}$ can be written as $\A(\w) = \D \X\in \reals^{n\times d}$ where $\A_i(\w) = [\D(\w)]_{ii} \x_i^T$. 

For our numerical simulations, we consider a very popular instance of GLMs, namely, logistic regression, where $\psi(u,y) = \log( 1 + \exp(-uy))$ and $\mathcal{Y} = \{\pm 1\}$. 
Table~\ref{tab:data} summarizes the datasets used in our experiments.
\begin{table}[H]
\vspace{-2mm}
\centering
\label{tab:data}
\begin{tabular}{c|c|c|c|c}
\sc dataset &  \tt CT slices\tablefootnote{\url{https://archive.ics.uci.edu/ml/datasets/Relative+location+of+CT+slices+on+axial+axis}} & \tt Forest\tablefootnote{\url{https://archive.ics.uci.edu/ml/datasets/Covertype}} & \tt Adult\tablefootnote{\url{https://archive.ics.uci.edu/ml/datasets/Adult}} & \tt Buzz\tablefootnote{\url{https://archive.ics.uci.edu/ml/datasets/Buzz+in+social+media+}} \\
\hline
$n$ & 53,500 & 581,012 & 32,561 & 59,535\\
$d$ & 385 & 55 & 123 & 78\\
$\kappa$ & 368 & 221 & 182 & 37 \\ 
$\hat \kappa$ & 47,078 & 322,370 & 69,359 & 384,580 \\
\end{tabular}
\caption{Datasets used in ridge logistic regression. In the above, $\kappa$ and $\bar \kappa$ are the local condition numbers of ridge logistic regression problem with $\lambda = 0.01$ as defined in~\eqref{eq:cond_nums}.
}

\end{table}
\vspace{-3mm}
We compare the performance of the following five algorithms:
(i) {\it Newton}: the standard Newton's method, (ii) {\it{Uniform}}:  SSN with uniform sampling,
(iii) \textit{PLevSS}: SSN with partial leverage scores sampling,
(iv) \textit{RNormSS}: SSN with block (row) norm squares sampling, and
(v) \textit{LBFGS-k}: standard L-BFGS method~\citep{liu1989lbfgs} with history size $k$,
(vi) \textit{GD}: Gradient Descent,
(vii) \textit{AGD}: Accelerated Gradient Descent(AGD)~\cite{nesterov2013introductory}.
Note that, despite all of our effort, we could not compare with methods introduced in~\cite{agarwal2016second, erdogdu2015convergence} as they seem to diverge in our experiments.

All algorithms are initialized with a zero vector.\footnote{Theoretically, the suitable initial point for all the algorithms is the one with which the standard Newton's method converges with a unit stepsize. Here, $\w_{0} = \mathbf{0}$ happens to be one such good starting point.} We also use CG to solve the sub-problem approximately to within $10^{-6}$ relative residue error.
%
In order to compute the relative error $\|\w_{t} - \w^*\|/\|\w^*\|$, an estimate of $\w^\ast$ is obtained by running the standard Newton's method for sufficiently long time.
Note here, in SSN with partial leverage score sampling, we recompute the leverage scores every $10$ iterations. 
Roughly speaking, these ``stale'' leverage scores can be viewed as approximate leverage scores for the current iteration with approximation quality that can be upper bounded by the change of the Hessian and such quantity is often small in practice.
So reusing the leverage scores allows us to further drive down the running time.

We first investigate the effect of the condition number, controlled by varying $\lambda$, on the performance of different methods, and the results are depicted in Figure~\ref{fig: lambdas}. It can be seen that in well-conditioned cases,  all sampling schemes work equally well. However, as the condition number gets larger, the performance of uniform sampling deteriorates, while non-uniform sampling, in particular leverage score sampling, shows a great degree of robustness to such ill-conditioning effect. 
The experiments shown in Figure~\ref{fig: lambdas} are consistent with the theoretical results of Table~\ref{tab: comp}.

\begin{figure}[htb]
\vspace{-3mm}
  \centering
\subfigure[condition number]{
    \includegraphics[width=0.3\textwidth]{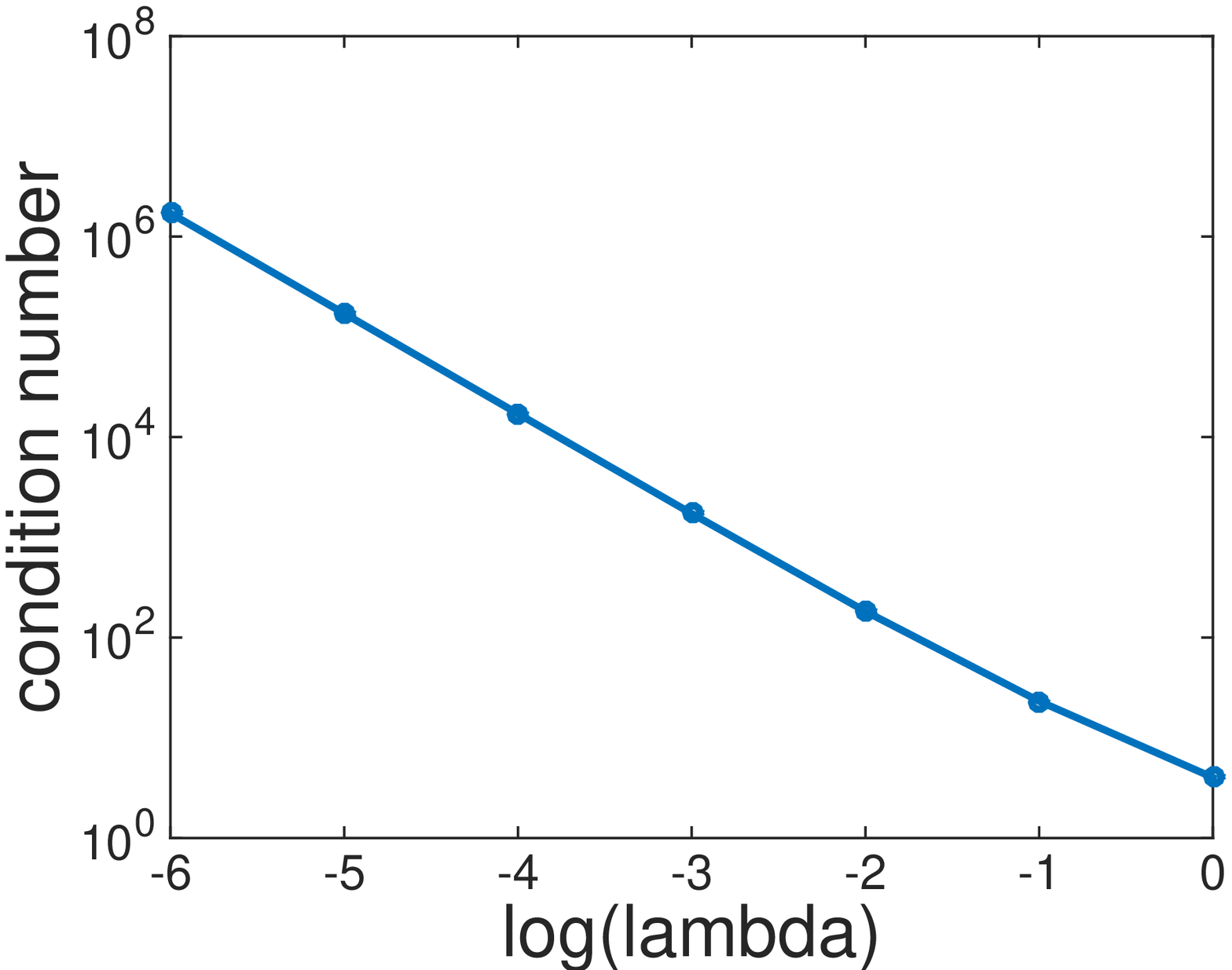}}
\subfigure[sampling size]{
    \includegraphics[width=0.3\textwidth]{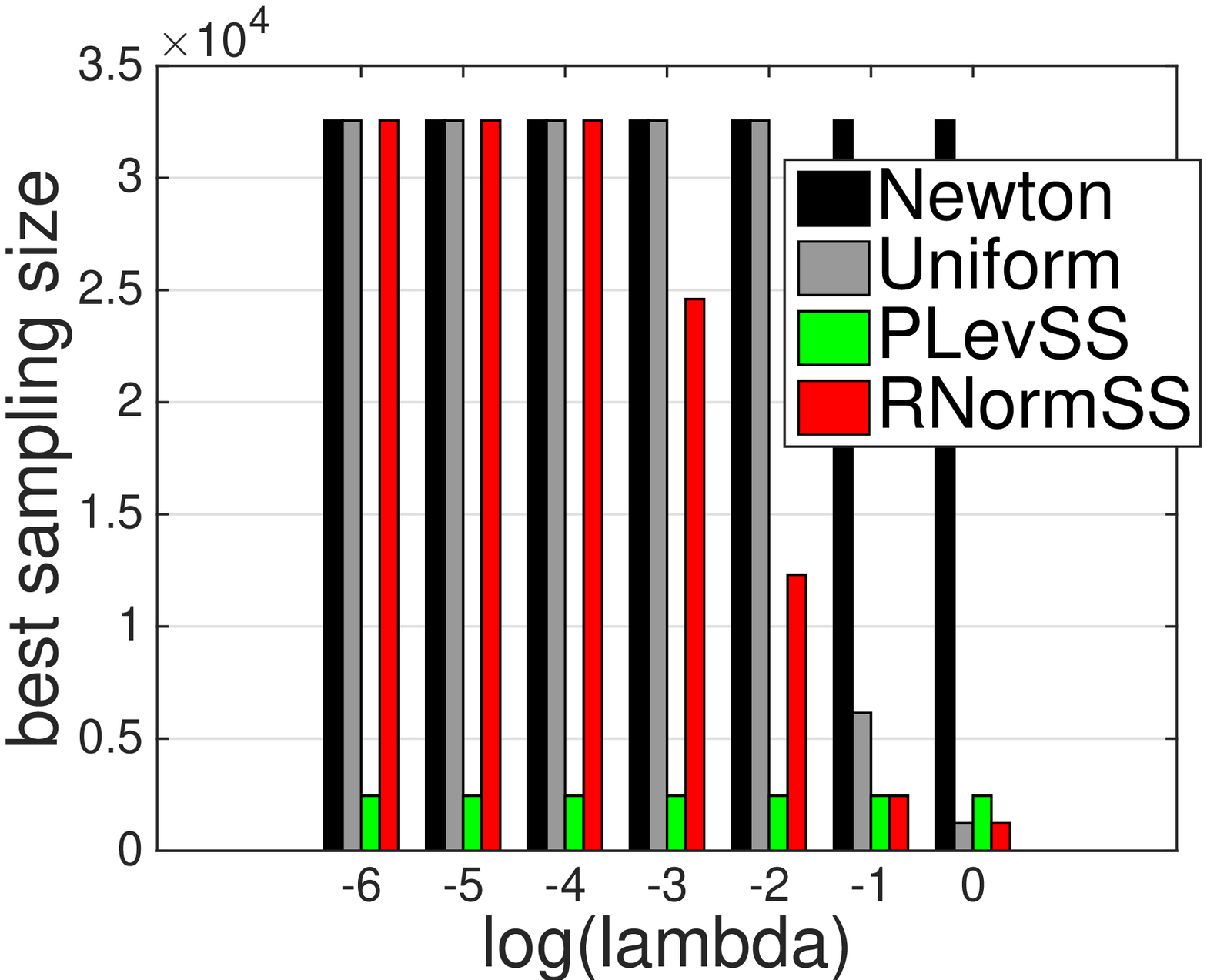}}
\subfigure[running time]{
    \includegraphics[width=0.3\textwidth]{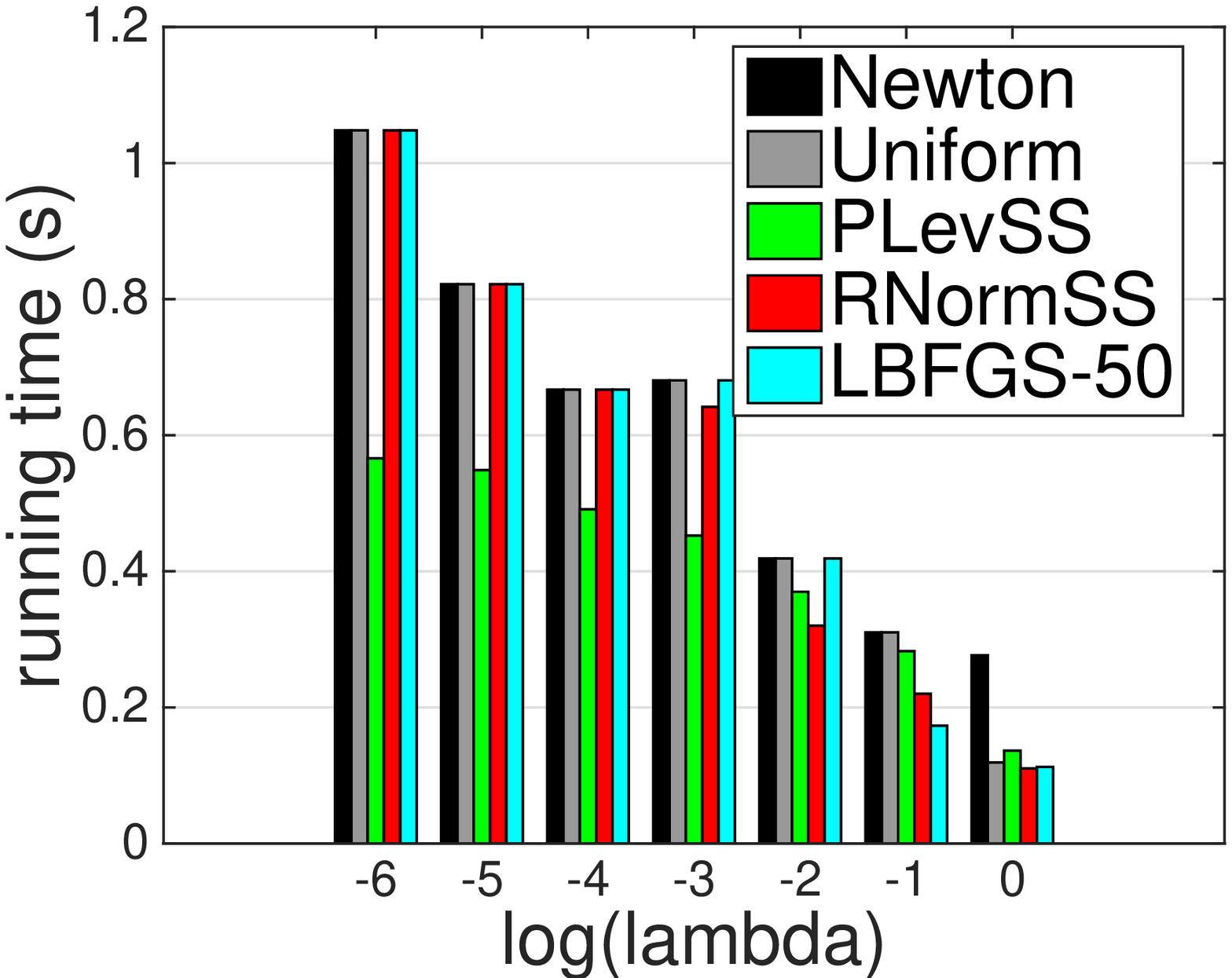}}
\caption{Ridge logistic regression on \texttt{Adult} with different $\lambda$'s: (a) local condition number $\kappa$, (b) sample size for different SSN methods giving the best overall running time, (c) running time for different methods to achieve $10^{-8}$ relative error. 
}
\label{fig: lambdas}
\end{figure}

Next, we compare the performance of various methods as measured by relative-error of the solution vs.\ running time.
First, we provide a set of empirical comparison between first-order and second-order methods in Figure~\ref{fig:app}.\footnote{For each sub-sampled Newton method, the sampling size is determined by choosing the best value from $\{10d,20d,30d,...,100d, 200d,300d,...,1000d\}$ in the sense that the objective value drops to $1/3$ of initial function value first. }
This is on dataset \texttt{CT Slice} with two different $\lambda$'s.
As can be seen clearly in Figure~\ref{fig:app}, SSN with non-uniform sampling not only drives down the loss function $F(\w)$ to an arbitrary precision much more quickly, but also recovers the minimizer $\w^\ast$ to a high precision while first-order methods such as Gradient Descent converge very slowly.
More importantly, unlike SSN with uniform sampling and LBFGS, non-uniform SSN exhibits a better robustness to condition number as it performance doesn't deteriorate much when the problem becomes more ill-conditioned (by setting the regularization $\lambda$ smaller in Figures~\ref{fig:app}(c) and Figure~\ref{fig:app}(d)). This robustness to condition number allows our approach to excel for a wider range of models.

\begin{figure}[h!tbp]
\centering
\subfigure[\scriptsize $\lambda = 10^{-2}$, error in $\w$]{
    \includegraphics[width=0.4\textwidth]{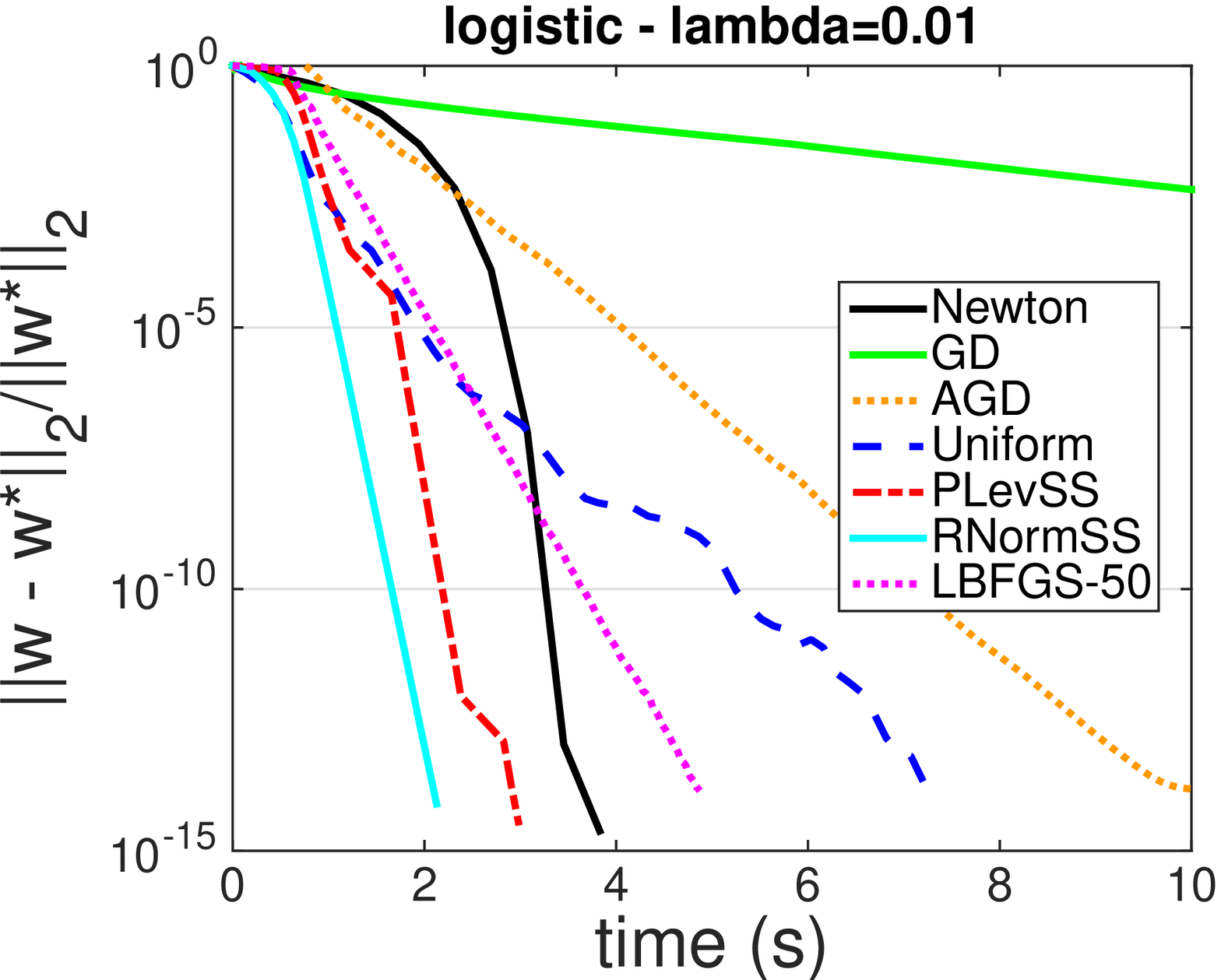}}
\subfigure[\scriptsize $\lambda = 10^{-2}$, error in $F(\w)$]{
    \includegraphics[width=0.4\textwidth]{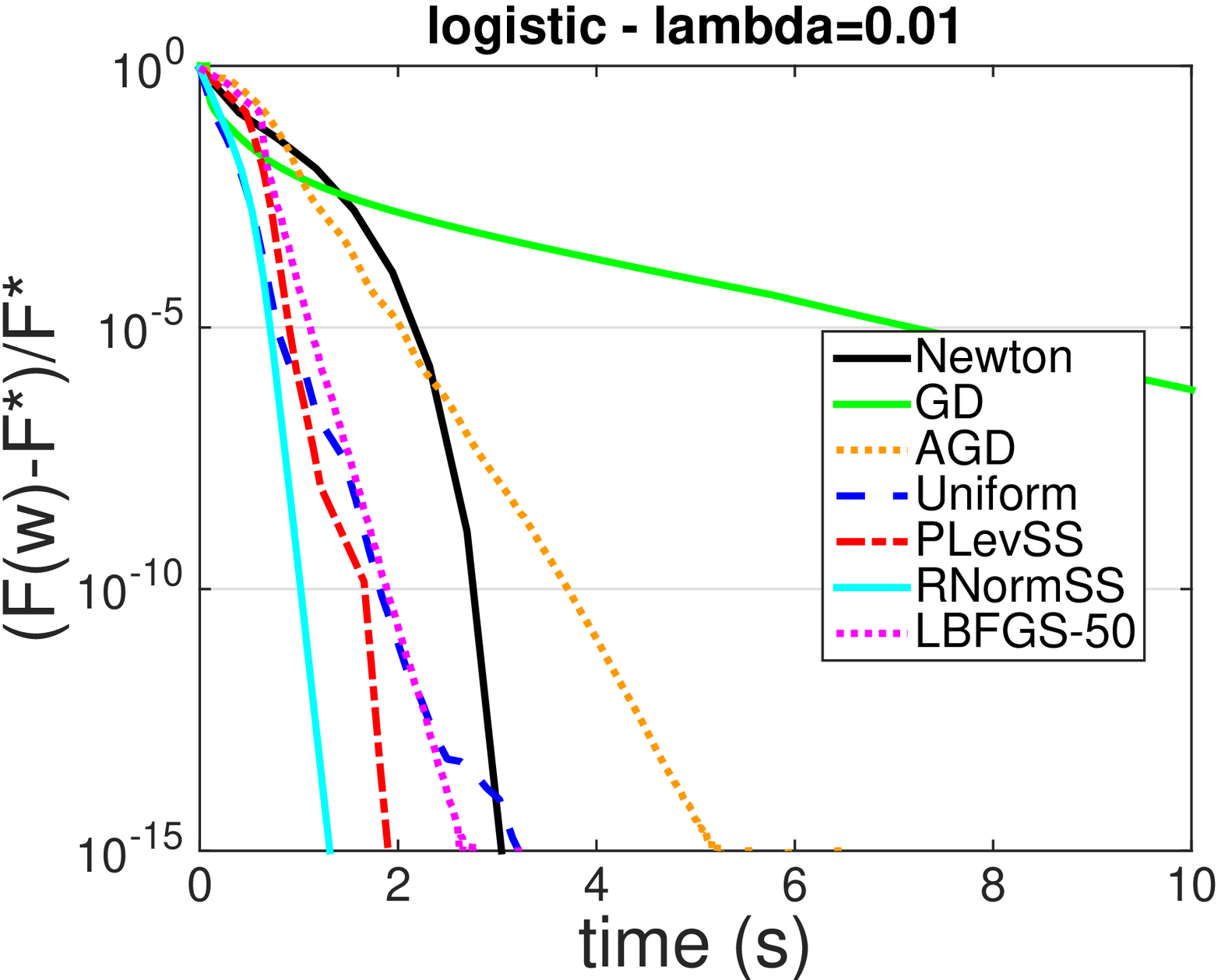}} \\
\subfigure[\scriptsize $\lambda = 10^{-4}$, error in $\w$]{
    \includegraphics[width=0.4\textwidth]{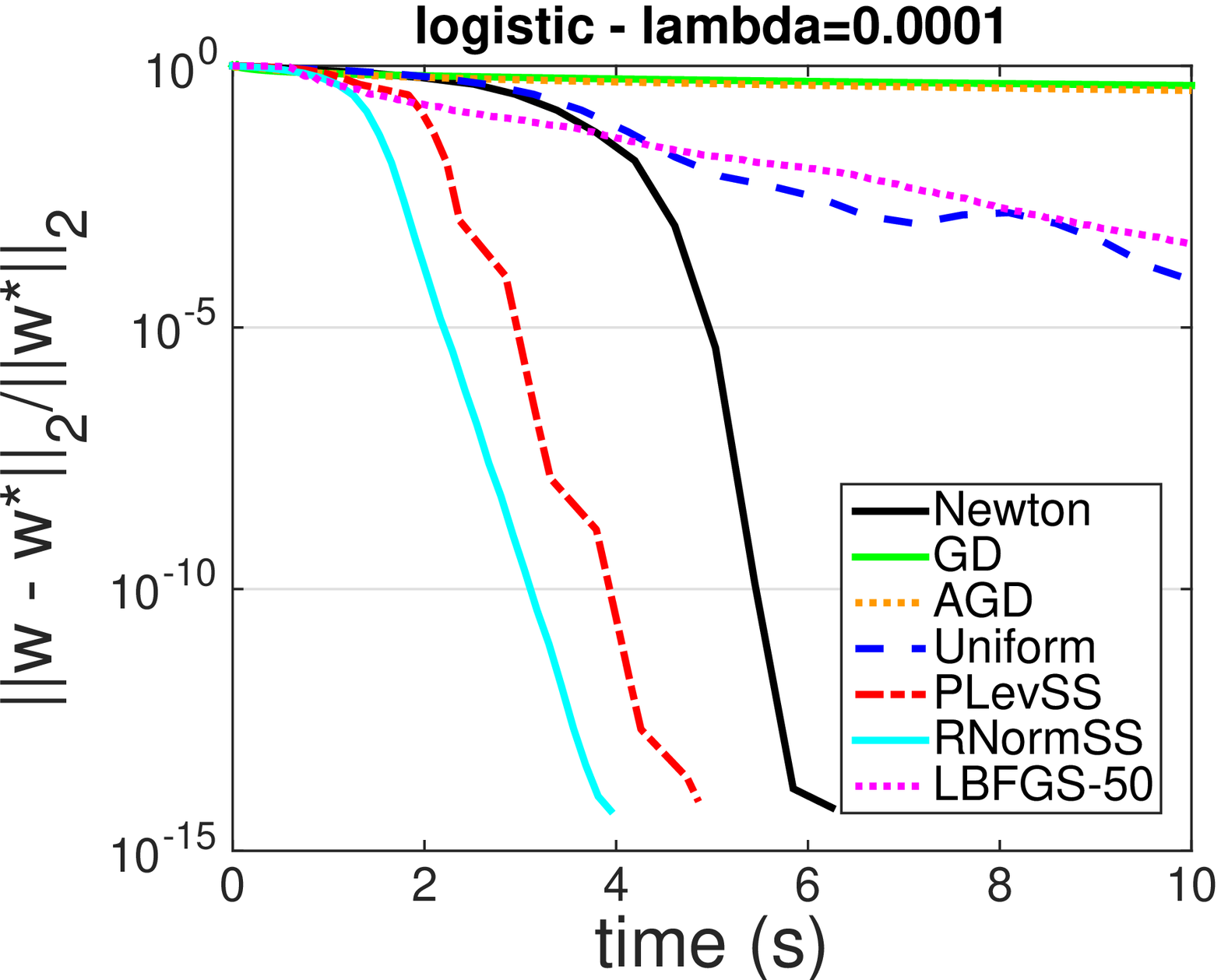}} 
  \subfigure[\scriptsize $\lambda = 10^{-4}$, error in $F(\w)$]{
    \includegraphics[width=0.4\textwidth]{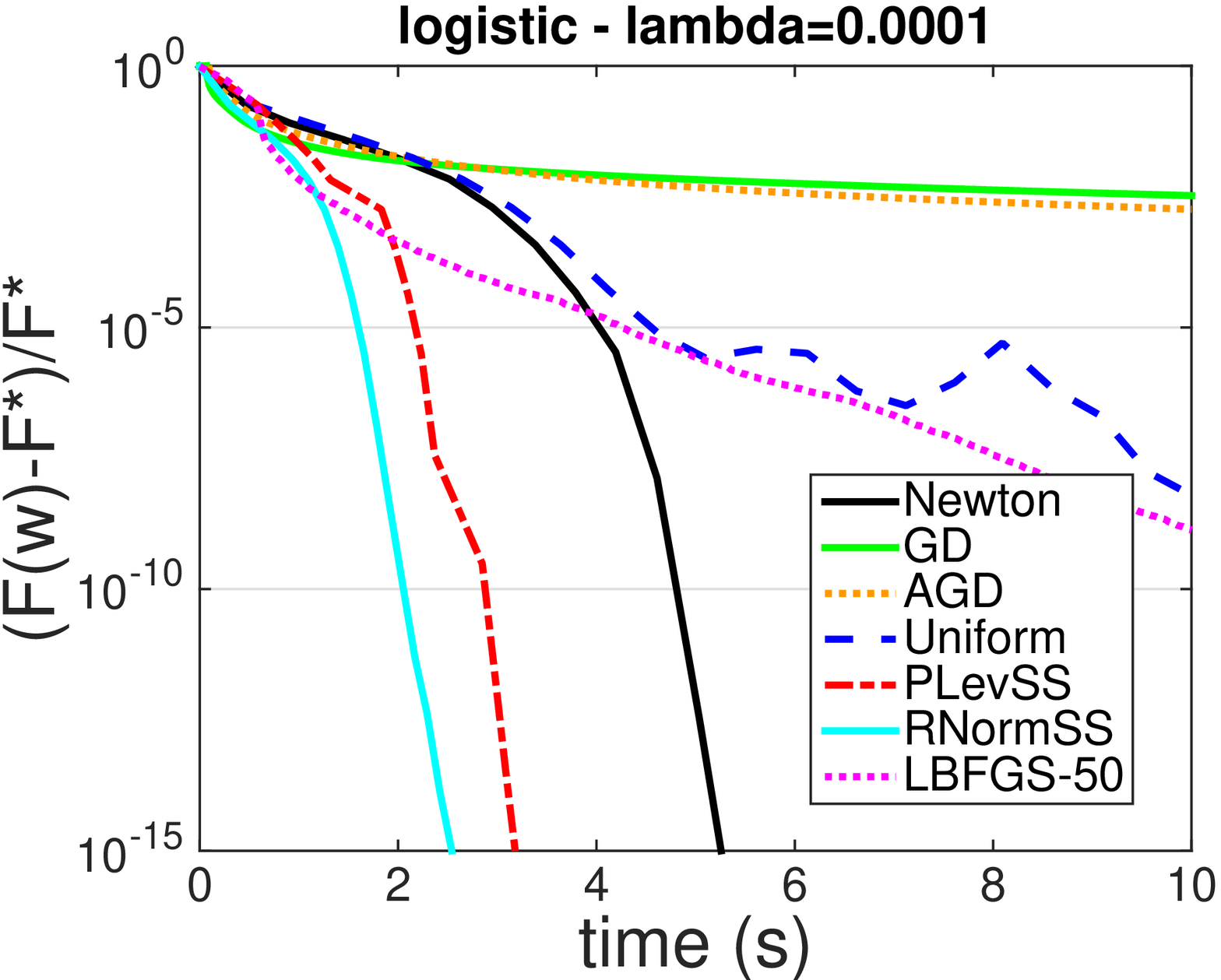}}
  \caption{Iterate relative error vs.\ time(s) for a ridge logistic regression problem with two choices of regularization parameter $\lambda$ on a real dataset \texttt{CT Slice}. Various second-order methods including standard Newton, LBFGS, SSN with uniform sampling (Uniform), partial leverage scores sampling (PLevSS) and row norm squares sampling (RNormSS), as well as gradient descent (GD) and its accelerated version (AGD) as representatives of first-order methods are implemented. Here, when $\lambda = 10^{-2}$, $\kappa = 386$, when $\lambda = 10^{-4}$, $\kappa = 1.387\times 10^4$.}
  \label{fig:app}
\end{figure}

A more comprehensive comparison among various second-order methods on the four datasets is presented in Figure~\ref{fig: logistic}.
It can be seen that, in most cases, SSN with non-uniform sampling schemes, i.e., PLevSS and RNormSS, outperform the other algorithms, especially Newton's method. In particular, they can be as twice faster as Newton's method. This is because on datasets with large $n$, the computational gain of our sub-sampled Newton methods in forming the (approximate) Hessian is significant while their convergence rate is only slightly worse than Newton's method (not shown here).
Moreover, recall that in Section~\ref{sec:comparison} we discussed that the convergence rate of SSN with uniform sampling relies on $\bar \kappa$.
When the problem exhibits a high non-uniformity among data points, i.e., $\bar \kappa$ is much higher than $\kappa$ as shown in Table~\ref{tab:data},
uniform sampling scheme performs poorly, e.g., in Figure~\ref{fig: logistic}(b).

\begin{figure}[h!tbp]
  \centering
\subfigure[\texttt{CT Slice}]{
    \includegraphics[width=0.4\textwidth]{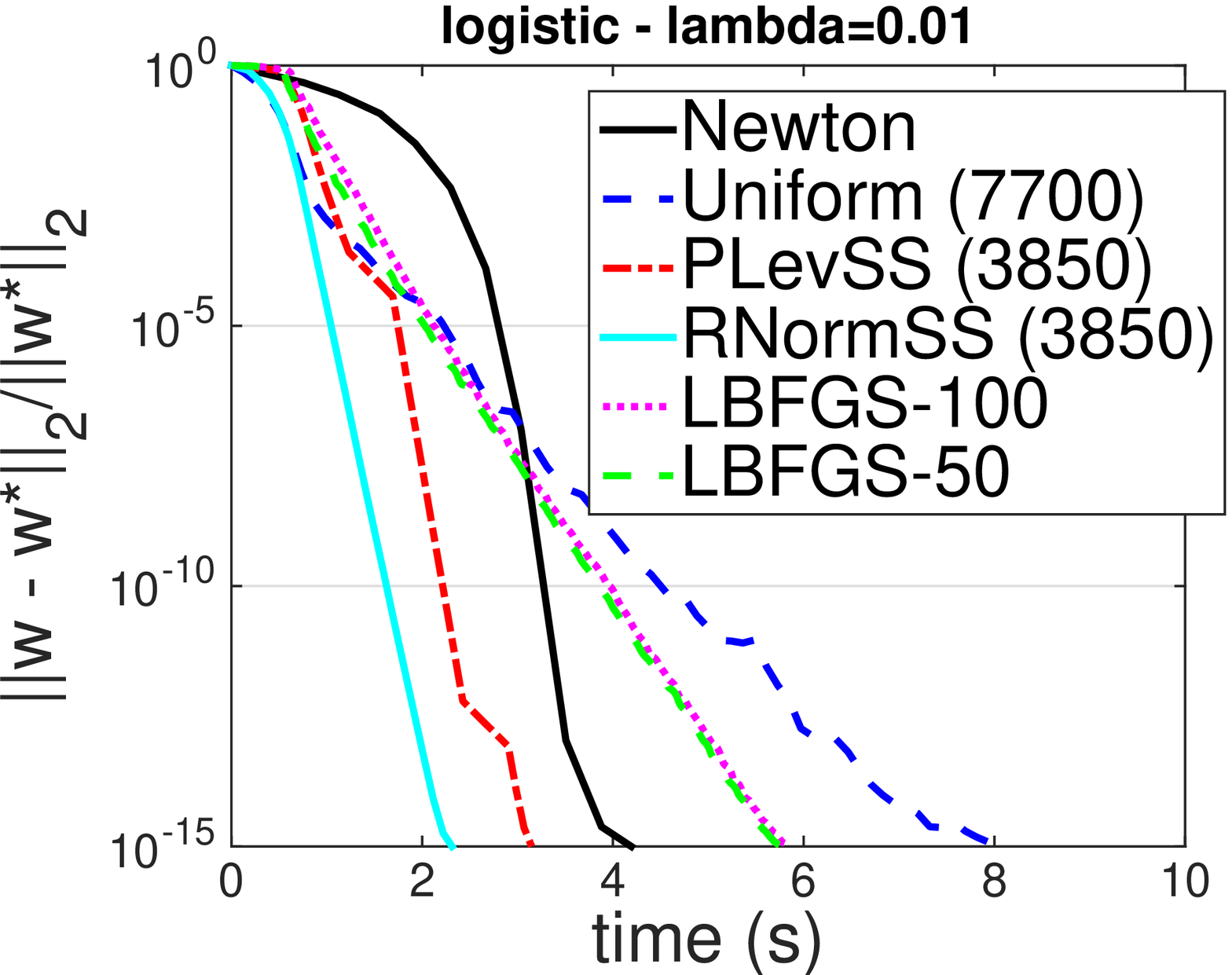}}
\subfigure[\texttt{Forest}]{
    \includegraphics[width=0.4\textwidth]{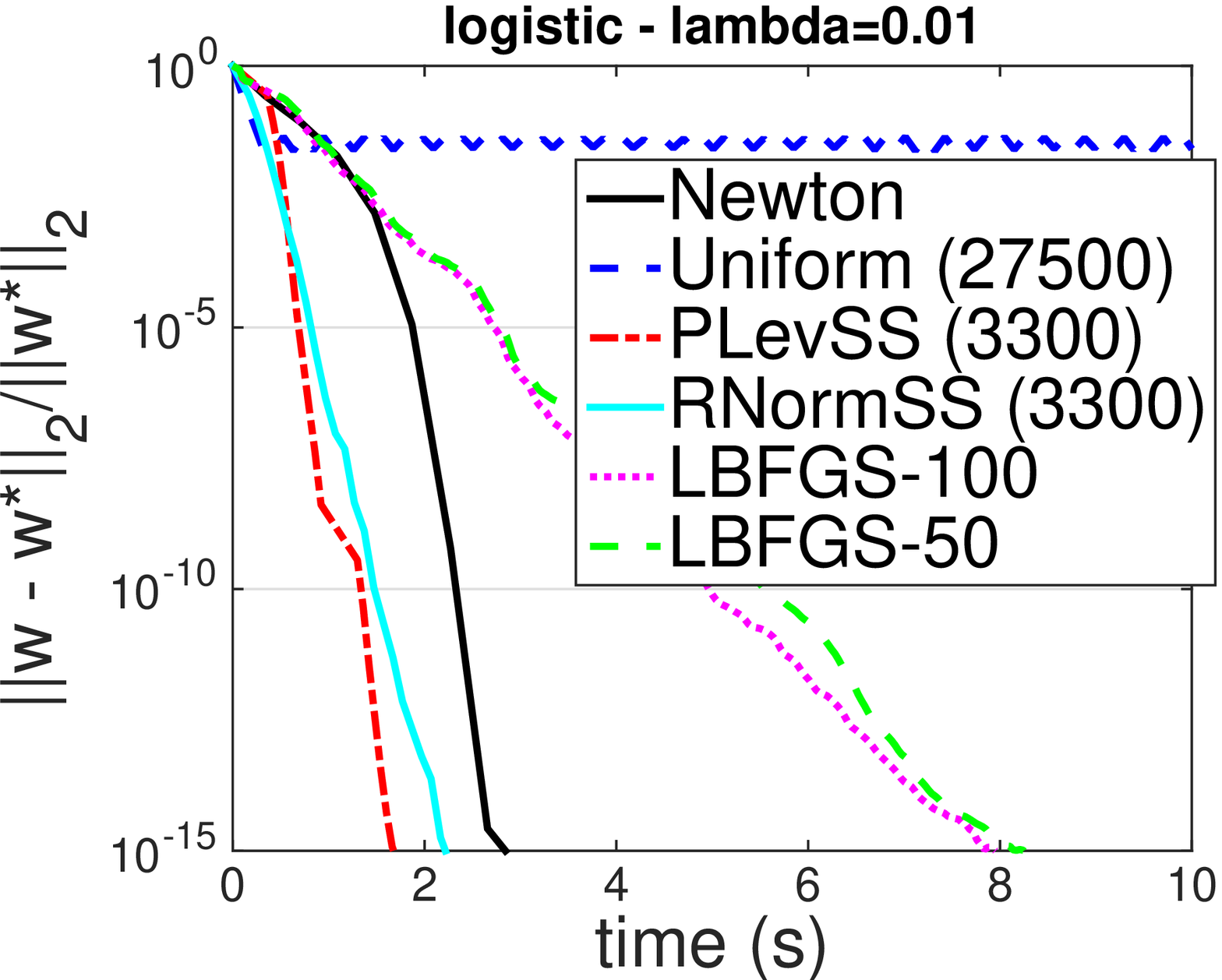}} \\
\subfigure[\texttt{Adult}]{
    \includegraphics[width=0.4\textwidth]{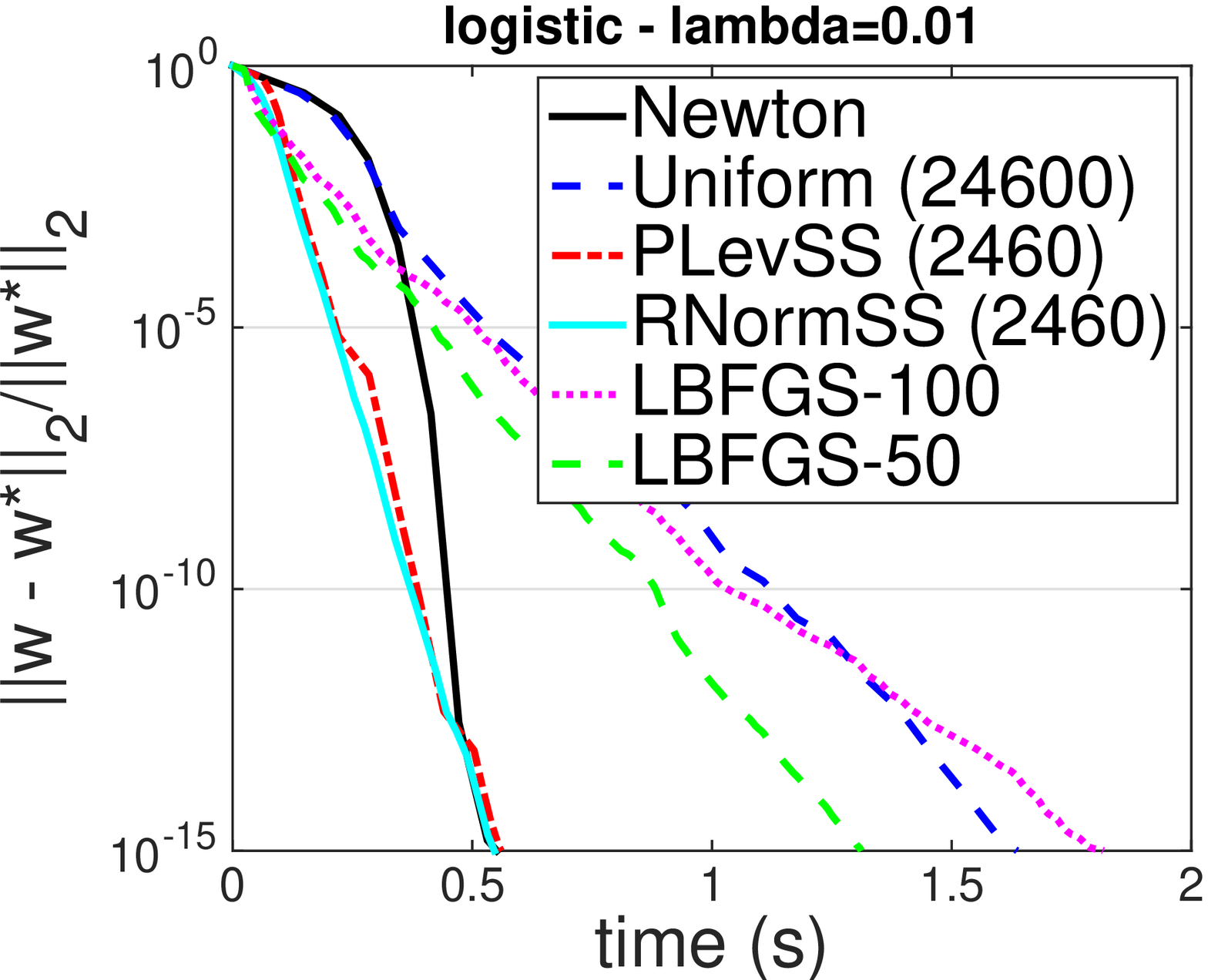}}
\subfigure[\texttt{Buzz}]{
    \includegraphics[width=0.4\textwidth]{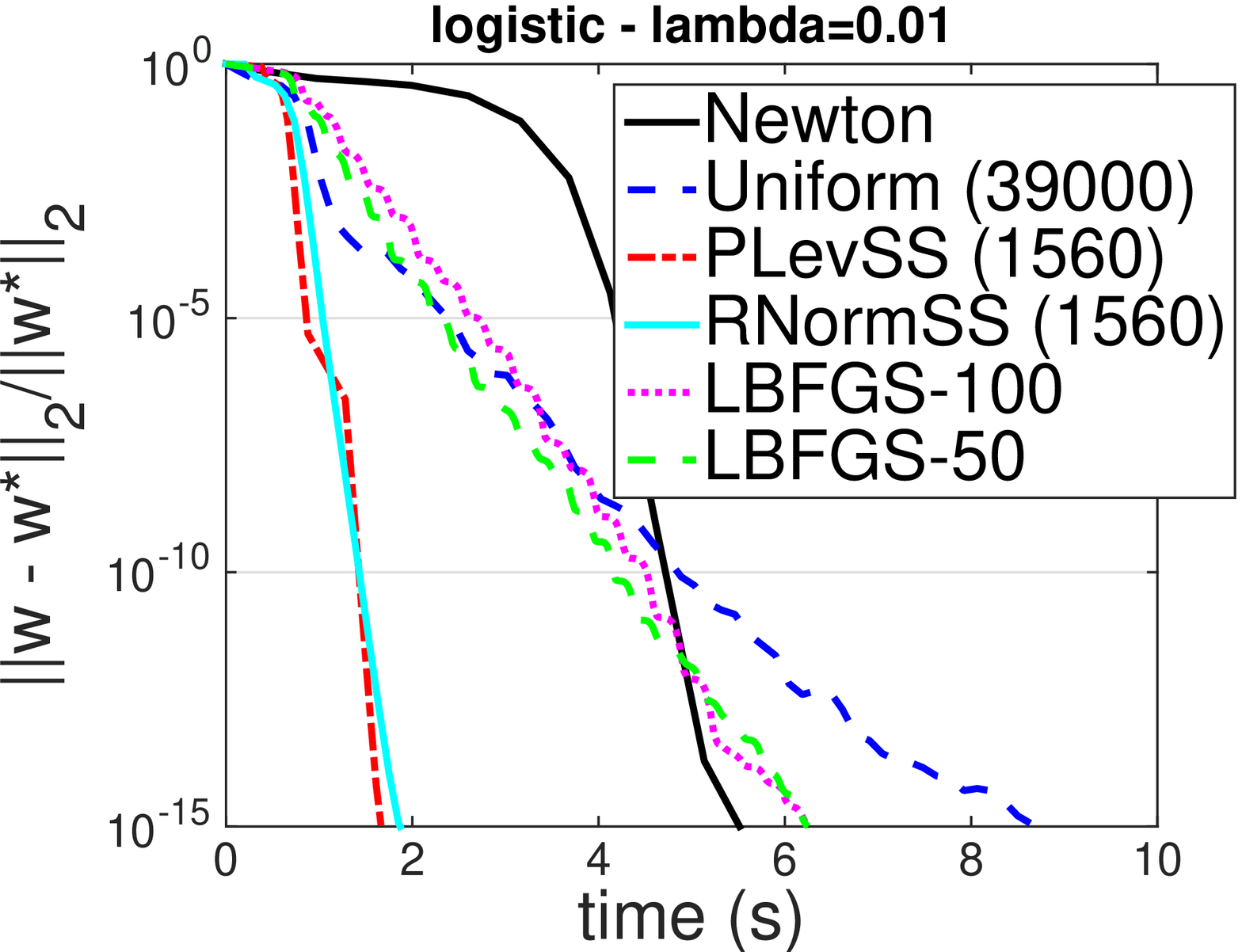}}
\caption{Iterate relative solution error vs.\ time(s) for various second-order methods on four datasets with ridge penalty parameter $\lambda = 0.01$. The values in brackets denote the sample size used for each method.}
\label{fig: logistic}
\end{figure}

\section{Conclusions}\label{sec: conclusions}
In this paper, we propose non-uniformly sub-sampled Newton methods with inexact update for a class of constrained problems. We show that our algorithms have a better dependence on the condition number and enjoy a lower per-iteration complexity, compared to other similar existing methods. Theoretical advantages are numerically demonstrated.

\vspace{5mm}
\textbf{Acknowledgments.}
We would like to acknowledge the Army Research Office and the Defense Advanced Research Projects Agency for providing partial support for this work.


\printbibliography


\newpage
\appendix

\section{Results of Block Partial Leverage Scores}
\label{sec:bplev_results}

In this work, we propose to use a new notion of leverage scores, namely, block partial leverage scores, for approximating matrix of the form $\A^T \A + \Q$. In this section, we give its theoretical guarantee, i.e., quality of approximation, which will be used in the proofs later.

\begin{theorem}
\label{thm:bplev}
Given $\A \in \reals^{N\times d}$ with $n$ blocks, $\Q \in \reals^{d\times d}$ satisfying $\Q \succeq \mathbf{0}$ and $\epsilon \in (0,1)$, let $\{ \tau^\Q_i(\A) \}_{i=1}^n$ be its block partial leverage scores. Let $p_i = \min\{1, s \cdot \frac{\tilde \tau_i}{\sum_{j=1}^n \tilde \tau_j}\}$ with $\tilde \tau_i \geq \tau^\Q_i(\A)$. Construct $\S\A$ by sampling the $i$-th block of $\A$ with probability $p_i$ and rescaling it by $1/\sqrt{p_i}$. Then if 
\begin{equation}
 \label{eq:pblev_s}
  s \geq 2 \left( \sum_{i=1}^n \tilde \tau_i \right) \cdot \left(\|\U^T\D\U\| + \frac{\epsilon}{3}\right) \cdot \log\left(\frac{4d}{\delta}\right) \cdot \frac{1}{\epsilon^2},
\end{equation}
where $\D$ is a diagonal matrix with $\D_{ii} = \tau_i^\Q(\A) / \tilde \tau_i$ and $\U$ is a matrix satisfying $\U^T \U \preceq \mathbf{I}$,
with probability at least $1-\delta$, we have
\begin{equation}
 \label{eq:pblev_obj}
   -\epsilon(\A^T \A + \Q) \preceq \A^T \S^T \S \A - \A^T \A \preceq \epsilon (\A^T \A + \Q).
\end{equation}
\end{theorem}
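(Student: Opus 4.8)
The plan is to recast the two-sided bound~\eqref{eq:pblev_obj} as a spectral-norm concentration statement after whitening, and then invoke a matrix Bernstein inequality. Write $\M = \A^T\A + \Q$, which equals $\bar\A^T\bar\A$ for $\bar\A = \begin{pmatrix}\A\\ \Q^{1/2}\end{pmatrix}$; since $\Q\succeq\mathbf{0}$ we have $\mathrm{range}(\A^T)\subseteq\mathrm{range}(\M)$, so we may whiten by $\M^{\dagger/2}$ and argue on $\mathrm{range}(\M)$ throughout. Set $\X_i = \M^{\dagger/2}\A_i^T\A_i\M^{\dagger/2}\succeq\mathbf{0}$. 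The key bookkeeping fact is that the block partial leverage score is exactly the trace of this whitened block: $\mathrm{tr}(\X_i) = \mathrm{tr}(\A_i\M^\dagger\A_i^T) = \sum_{j\in\text{block }i}\a_j^T\M^\dagger\a_j = \tau_i^\Q(\A)$. Consequently $\|\X_i\|\le\mathrm{tr}(\X_i) = \tau_i^\Q(\A)\le\tilde\tau_i$, and $\sum_i\X_i = \M^{\dagger/2}\A^T\A\M^{\dagger/2}\preceq\mathbf{I}$ on $\mathrm{range}(\M)$.

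Next I would write the sampled Gram matrix as $\A^T\S^T\S\A = \sum_{i=1}^n\frac{z_i}{p_i}\A_i^T\A_i$ with independent $z_i\sim\mathrm{Bernoulli}(p_i)$, so that $\mathbf{E}[\A^T\S^T\S\A] = \A^T\A$, and define the independent, symmetric, mean-zero matrices $\Y_i = (\frac{z_i}{p_i}-1)\X_i$; the goal then reduces to showing $\Pr[\,\|\sum_i\Y_i\| > \epsilon\,]\le\delta$. Blocks with $p_i = 1$ contribute $\Y_i = 0$, so only blocks with $p_i = s\tilde\tau_i/(\sum_j\tilde\tau_j)$ matter. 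For those, $\|\Y_i\|\le\frac{1}{p_i}\|\X_i\|\le\frac{1}{p_i}\mathrm{tr}(\X_i) = \frac{(\sum_j\tilde\tau_j)\,\tau_i^\Q(\A)}{s\,\tilde\tau_i}\le\frac{\sum_j\tilde\tau_j}{s}$; denote this uniform bound $R$. For the variance, $\mathbf{E}[\Y_i^2] = \frac{1-p_i}{p_i}\X_i^2\preceq\frac{\mathrm{tr}(\X_i)}{p_i}\X_i = \frac{(\sum_j\tilde\tau_j)\,\tau_i^\Q(\A)}{s\,\tilde\tau_i}\X_i$, using $\X_i^2\preceq\mathrm{tr}(\X_i)\X_i$. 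Writing $\X_i=\sum_{j\in\text{block }i}\u_j\u_j^T$ with $\u_j=\M^{\dagger/2}\a_j$ and stacking the $\u_j^T$ into a matrix $\U$ (so that $\U^T\U = \M^{\dagger/2}\A^T\A\M^{\dagger/2}\preceq\mathbf{I}$), one obtains $\sum_i\mathbf{E}[\Y_i^2]\preceq\frac{\sum_j\tilde\tau_j}{s}\,\U^T\D\U$, where $\D$ is the diagonal matrix whose entries on the rows of block $i$ equal $\tau_i^\Q(\A)/\tilde\tau_i$. Hence $\|\sum_i\mathbf{E}[\Y_i^2]\|\le\sigma^2$ with $\sigma^2 = \frac{\sum_j\tilde\tau_j}{s}\,\|\U^T\D\U\|$.

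Finally, the matrix Bernstein inequality gives $\Pr[\,\|\sum_i\Y_i\|>\epsilon\,]\le 2d\exp\!\Big(-\frac{s\epsilon^2/2}{(\sum_j\tilde\tau_j)(\|\U^T\D\U\|+\epsilon/3)}\Big)$, and the choice of $s$ in~\eqref{eq:pblev_s} drives the right-hand side below $\delta$ (the stated $\log(4d/\delta)$ leaves a constant of slack over the $\log(2d/\delta)$ that Bernstein strictly needs). On the event $\|\sum_i\Y_i\|\le\epsilon$ we have $-\epsilon\mathbf{I}\preceq\M^{\dagger/2}(\A^T\S^T\S\A-\A^T\A)\M^{\dagger/2}\preceq\epsilon\mathbf{I}$ on $\mathrm{range}(\M)$, and since $\A^T\S^T\S\A-\A^T\A$ vanishes on $\mathrm{range}(\M)^\perp$ this is equivalent to~\eqref{eq:pblev_obj}. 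The step I expect to be the main obstacle is the variance bound: obtaining the sharp factor $\|\U^T\D\U\|$ (rather than the crude constant $1$) requires correctly identifying $\U$ and $\D$, using $\X_i^2\preceq\mathrm{tr}(\X_i)\,\X_i$ in place of $\X_i^2\preceq\|\X_i\|\,\X_i$, and handling the $p_i=1$ truncation carefully; the whitening and pseudoinverse range arguments are routine but must be stated cleanly when $\M$ is singular.
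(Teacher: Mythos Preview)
Your proposal is correct and follows essentially the same route as the paper: whiten the blocks by a square-root inverse of $\A^T\A+\Q$, bound the whitened blocks via $\|\X_i\|\le\tr(\X_i)=\tau_i^\Q(\A)$, control the variance via $\X_i^2\preceq\tr(\X_i)\X_i$ to obtain the $\U^T\D\U$ factor, apply matrix Bernstein, and undo the whitening. The only cosmetic difference is that the paper whitens with the QR factor $\R^{-1}$ of $\bar\A=\begin{pmatrix}\A\\\Q^{1/2}\end{pmatrix}$ (so its $\U=\A\R^{-1}$) whereas you use the symmetric $\M^{\dagger/2}$; the two $\U$'s differ by a right orthogonal factor, so $\|\U^T\D\U\|$ is the same. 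Your treatment of the singular case via $\M^\dagger$ and the range restriction is in fact cleaner than the paper's, which implicitly assumes $\R$ is invertible.
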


\begin{proof}
Denote $\bar \A = \begin{pmatrix} \A \\ \Q^{\frac{1}{2}} \end{pmatrix}$.
Let $\bar \A = \bar \U\R$ where $\bar \U$ has orthonormal columns.
Then define $\U = \A \R^{-1}$ and $\U_i = \A_i \R^{-1}$ for $i = 1,\ldots,n$.
By definition, the true partial leverage scores $\tau_i^\Q(\A)$'s are defined as $\tau_i = \tr(\U_i \U_i^T)$. For simplicity, we use $\tau_i$ to denote $\tau_i^\Q(\A)$.

In the following we bound $\|\U^T \S^T \S \U - \U^T \U\| \leq \epsilon$ with high probability.
For $i = 1,\ldots,n$, define
\begin{equation}
  \X_j = \begin{cases}
     (\frac{1}{p_i} - 1) \U_i^T \U_i & \text{with probability } p_i;\\
     - \U_i^T \U_i & \text{with probability } 1 - p_i.
 \end{cases}
\end{equation}
Also define $\Y = \sum_{i=1}^n \X_i$.
We have $\Expect{\X_i} = 0$.
In the following we bound $\| \Y \|$ using matrix Bernstein bound.

First, we bound $\|\X_i\|$ for $i=1,\ldots,n$. 
Let $\mathcal{I} = \{i | p_i < 1 \}$.
If $i \in \mathcal{I}^c$, then $\X_i = 0$ and $\|\X_i\| = 0$. Thus we only consider the case where $i \in \mathcal{I}$. In this case $p_i = \frac{s \tilde \tau_i}{\sum_j \tilde \tau_j}$.
Since $p_i < 1$, we have
\begin{equation}
   -\U_i^T \U_i/p_i \preceq \X_i \preceq \U_i^T \U_i/p_i.
\end{equation}
Moreover,
\begin{eqnarray}
\label{eq:lemma_partial_lev_1}
   \|\X_i\| &\leq& \|\U_i^T \U_i\|/p_i = \|\U_i\|^2 / p_i 
     \\
     &\leq& \frac{\left( \sum_{j=1}^n \tilde \tau_j \right) \|\U_i\|^2}{s \tilde \tau_i}
     = \frac{\left( \sum_{j=1}^n \tilde \tau_j \right) \|\U_i\|^2}{s \tau_i} \frac{\tau_i}{\tilde \tau_i} \\ 
     \label{eq:lemma_partial_lev_1}
     &\leq& \frac{\left( \sum_{j=1}^n \tilde \tau_j \right)}{s} \frac{\tau_i}{\tilde \tau_i} \leq \frac{\left( \sum_{j=1}^n \tilde \tau_j \right)}{s}.
\end{eqnarray}
The last inequality is coming from the fact that $ \tilde \tau_i \geq\tau_i = \tr(\U_i \U_i^T) \geq \|\U_i\|^2$.

Next, we bound $\Expect{\Y^2} = \sum_{i=1}^n \Expect{\X_i^2}$.
We have
  \begin{eqnarray}
    \sum_{i=1}^n \Expect{\X_i^2} &=& \sum_{i\in\mathcal{I}} \Expect{\X_i^2} = \sum_{i=1}^n (p_i\cdot (\frac{1}{p_i}-1)^2 + (1-p_i) ) \A_i^T \A_i \A_i^T \A_i \\
     &=& \sum_{i\in\mathcal{I}} (\frac{1}{p_i} - 1) \U_i^T \U_i \U_i^T \U_i 
     \preceq \sum_{i\in\mathcal{I}} \frac{1}{p_i} \U_i^T \U_i \U_i^T U_i \\
     \label{eq:lemma_partial_lev_2}
     &\preceq& \sum_{i\in\mathcal{I}} \frac{\sum_{j=1}^n \tilde \tau_j}{s} \cdot  \frac{\tau_i}{\tilde \tau_i} \cdot \U_i^T  \U_i \\
     &\preceq& \frac{\sum_{j=1}^n \tilde \tau_j}{s} \cdot \U^T \D \U,
  \end{eqnarray}
where $\D$ is a diagonal matrix with $\D_{ii} = \tau_i / \tilde \tau_i$. Above, \eqref{eq:lemma_partial_lev_2} holds since by \eqref{eq:lemma_partial_lev_1} we have 
$ \U_i^T \U_i/p_i \preceq \left( \sum_{j=1}^n \tilde \tau_j \right) \cdot \frac{\tau_i}{\tilde \tau_i} \frac{1}{s} \cdot \mathbf{I}$.
Therefore,
\begin{equation}
  \| \Expect{\Y^2} \| \leq \frac{\sum_{j=1}^n \tilde \tau_j}{s} \cdot \| \U^T \D \U \|.
\end{equation}
Since $\U$ consists of a subset of rows of $\bar \U$, one can show that $\U^T \U \preceq \bar \U^T \bar \U = \mathbf{I}$.

Given these, by the matrix Bernstein bound\cite{tropp2015introduction}, we have when
\begin{equation}
  s \geq 2 \left( \sum_{j=1}^n \tilde \tau_j \right) \left(\|\U^T\D\U\| + \frac{\epsilon}{3}\right) \cdot \log\left(\frac{4d}{\delta}\right) \cdot \frac{1}{\epsilon^2},
\end{equation}
with probability at least $1-\delta$, 
\begin{equation}
  \| \Y \| \leq \epsilon
\end{equation}
holds. 
With this one can show that
\begin{equation}
  -\epsilon \mathbf{I} \preceq \U^T \S^T \S \U - \U^T \U \preceq \epsilon \mathbf{I}.
\end{equation}
Furthermore, we have
\begin{equation}
  -\epsilon \R^T \R \preceq \R \U^T \S^T \S \U \R- \R^T\U^T \U \R \preceq \epsilon \R^T \R.
\end{equation}
Therefore,
\begin{equation}
  -\epsilon \bar \A^T \bar \A \preceq \A^T \S^T \S \A - \A^T \A \preceq \epsilon \bar \A^T \bar \A.
\end{equation}

This completes the proof.
\end{proof}

\noindent
{\bf Remark.}
In \eqref{eq:pblev_s}, since each element of $\D$ is no greater than $1$ and $\U^T \U \preceq \mathbf{I}$, we have
\begin{equation}
  \U^T \D \U \preceq \U^T \U \preceq \mathbf{I}.
\end{equation}
So the sampling size 
\begin{equation}
  s = 4 \left( \sum_{i=1}^n \tilde \tau_i \right) \cdot \log\left(\frac{4d}{\delta}\right) \cdot \frac{1}{\epsilon^2}
\end{equation}
is sufficient to yield \eqref{eq:pblev_obj}.

\section{Proofs}
\subsection{Proof of Lemma~\ref{lem:general}}
In the section, we will prove the results in Lemma~\ref{lem:general}. Specifically we have two parts of proof. {\bf Part 1} is to prove the case when the subproblem is solved exactly and {\bf Part 2} is for the case when the subproblem is solved approximately. And in {\bf Part 1}, we also have two cases, one is recursion inequality under condition~\eqref{eq:cond1} and the other is the recursion inequality under condition~\eqref{eq:cond2}.

Throughout the proof, we use $\preceq$ to denote the partial order defined on the cone $\{ \mathbf{B} \in \reals^{d\times d} \: | \: \w^T \mathbf{B} \w \geq 0 \:\: \forall \w \in \K\}$.
First, denote $\Delta_t := \w_t - \w^*$, then based on Assumptions~\ref{assump:1} and \ref{assump:2}, we have
\begin{equation}
\|\nabla^2 F(\w_t) - \nabla^2 F(\w^*) \|_2 \le L\|\Delta_t\|_2, \quad \mu \mathbf{I} \preceq \H^* \preceq \nu \mathbf{I}.
\end{equation}
Therefore we can get
\begin{equation}
\label{eq:dt}
(\mu - L\|\Delta_t\|_2)\mathbf{I} \preceq \nabla^2 F(\w_t) \preceq (\nu + L\|\Delta_t\|_2)\mathbf{I}.
\end{equation}
Since $\|\Delta_t\|_2 \le \frac{\mu}{4L}$, then
\begin{equation}
\label{eq:dt}
\frac{3}{4}\mu\mathbf{I} \preceq \nabla^2 F(\w_t) \preceq \frac{5}{4}\nu\mathbf{I}.
\end{equation}
Let
\begin{equation}
\label{eq: sub}
\Psi_t(\w) := \frac{1}{2} (\w - \w_t)^T \tilde \H_t (\w - \w_t) + (\w - \w_t)^T \nabla F(\w_t).
\end{equation}

\paragraph{Part 1} In the part, we consider the subproblem is solved exactly every iteration in Algorithm \ref{alg:main}. Specifically we show the following two results:
\begin{enumerate}[a)]
\item Under condition~\eqref{eq:cond1}, the error recursion~\eqref{eq:rate0} holds with factors in~\eqref{eq:rate1}.
\item Under condition~\eqref{eq:cond2}, the error recursion~\eqref{eq:rate0} holds with factors in~\eqref{eq:rate2}.
\end{enumerate}
If the subproblem $\min_{\w\in\C} \Psi_t(\w)$ is solved exactly in Algorithm \ref{alg:main}, namely
\begin{equation}
\w_{t+1} = \arg\min_{\w \in \C} \Psi_t(\w),
\end{equation}
Then $\Psi_t(\w_{t+1}) \le \Psi_t(\w^*)$. 
By expanding both sides, we have
\begin{equation}
\label{eq:main}
\frac{1}{2} \Delta_{t+1}^T\tilde \H_t\Delta_{t+1} \le \Delta_t^T\tilde \H_t \Delta_{t+1} - \nabla F(\w_t)^T\Delta_{t+1} + \nabla F(\w^*)^T\Delta_{t+1}.
\end{equation}
The third term on the right hand side $\nabla F(\w^*)^T\Delta_{t+1} \ge 0 $ because of the optimality condition.

\begin{align}
RHS &= \Delta_t^T\tilde \H_t \Delta_{t+1} - \nabla F(\w_t)^T\Delta_{t+1} + \nabla F(\w^*)^T\Delta_{t+1}
\\ &= \underbrace{\Delta_t^T\nabla^2 F(\w_t) \Delta_{t+1} - (\nabla F(\w_t) - \nabla F(\w^\ast))^T\Delta_{t+1}}_{T_1} +  \Delta_t^T(\tilde \H_t - \nabla^2 F(\w_t) ) \Delta_{t+1}.
\end{align}

\begin{align}
T_1 &=  \Delta_t^T \nabla^2 F(\w_t) \Delta_{t+1}  - \Delta_t^T \left(\int_0^1 \nabla^2 F(\w^\ast + \delta (\w_t  - \w^\ast) ) d\delta \right) \Delta_{t+1}  
\\
& \le \Delta_t^T \left( \int_0^1\left( \nabla^2 F(\w^\ast + \delta (\w_t  - \w^\ast) )  - \nabla^2 F(\w_t) \right )d\delta \right) \Delta_{t+1}
\\
& \le \|\Delta_t\|_2\cdot \int_0^1 L\delta\|\Delta_t \|_2 d\delta \cdot \|\Delta_{t+1}\|
\\
&\le \frac{L}{2} \cdot \|\Delta_t\|^2 \cdot \|\Delta_{t+1}\|.
\end{align}
Plug in $T_1$ and rewrite \eqref{eq:main}, we get
\begin{align}
\label{eq:lr}
1 \le \frac{L\cdot\|\Delta_t\|^2 \cdot \|\Delta_{t+1}\|}{ \Delta_{t+1}^T\tilde \H_t\Delta_{t+1}} + 2\frac{\Delta_t^T(\tilde \H_t - \nabla^2 F(\w_t) ) \Delta_{t+1}}{ \Delta_{t+1}^T\tilde \H_t\Delta_{t+1}}.
\end{align}
\begin{enumerate}[a)]
\item
Under condition~\eqref{eq:cond1}, we have 
\begin{align}
\lmin^\K(\tilde{\H_t}) &\ge \lmin^\K(\nabla^2 F(\w_t)) - \epsilon\cdot \lmax^\K(\nabla^2 F(\w_t)) 
\\&= (1 - \epsilon\kappa(\nabla^2 F(\w_t)))\cdot \lmin^\K(\nabla^2 F(\w_t))
\\&\ge ( 1- 2\epsilon\kappa)\cdot\lmin^\K(\nabla^2 F(\w_t)).
\end{align}
Then
\begin{align}
\Delta_t^T(\tilde \H_t - \nabla^2 F(\w_t) ) \Delta_{t+1} \ge  [\lmin^\K(\nabla^2 F(\w_t)) - \epsilon\cdot \lmax^\K(\nabla^2 F(\w_t)) ] \cdot \|\Delta_{t+1}\|^2.
\end{align}
Therefore
\begin{eqnarray}
1 &\le& \frac{L\cdot\|\Delta_t\|^2 \cdot \|\Delta_{t+1}\|}{[\lmin^\K(\nabla^2 F(\w_t)) - \epsilon\cdot \lmax^\K(\nabla^2 F(\w_t)) ] \cdot \|\Delta_{t+1}\|^2}\\ &&+ 2 \frac{\epsilon \cdot \lmax^\K(\nabla^2 F(\w_t))\cdot \|\Delta_t\| \cdot \|\Delta_{t+1}\|}{[\lmin^\K(\nabla^2 F(\w_t)) - \epsilon\cdot \lmax^\K(\nabla^2 F(\w_t)) ] \cdot \|\Delta_{t+1}\|^2}
\end{eqnarray}
Reorganize it and combine \eqref{eq:dt} we get
\begin{align}
\|\Delta_{t+1}\| &\le \frac{L}{3\mu/4 - 5\epsilon \nu/4}\cdot \|\Delta_t\|^2 + \frac{5\epsilon\nu/2}{3\mu/4 - 5\epsilon\nu/4}\cdot \|\Delta_t\|
\\&\le \frac{2L}{(1 -2\kappa \epsilon)\mu}\cdot \|\Delta_t\|^2 + \frac{4\kappa\epsilon}{1 - 2\kappa\epsilon}\cdot\|\Delta_t\|.
\end{align}
This proves a).
\item
Under condition \eqref{eq:cond2}, we have
\begin{align}
\Delta_{t+1}^T\tilde\H_t \Delta_{t+1} \ge\Delta_{t+1}^T\nabla^2 F(\w_t) \Delta_{t+1}  - \epsilon\cdot \Delta_{t+1}^T\nabla^2 F(\w_t) \Delta_{t+1} = ( 1 - \epsilon) \cdot \Delta_{t+1}^T\nabla^2 F(\w_t) \Delta_{t+1}.
\end{align}
and 
\begin{align}
\Delta_t^T(\tilde\H_t - \nabla^2 F(\w_t))\Delta_{t+1} \le\epsilon\cdot \sqrt{\Delta_t^T\nabla^2 F(\w_t)\Delta_t}\cdot\sqrt{\Delta_{t+1}^T\nabla^2 F(\w_t)\Delta_{t+1}}
\end{align}
Then plug in \eqref{eq:lr}, we have
\begin{align}
1 &\le \frac{L \cdot\|\Delta_t\|^2 \cdot \|\Delta_{t+1}\|}{( 1 - \epsilon) \cdot \Delta_{t+1}^T\nabla^2 F(\w_t) \Delta_{t+1}}  + 2 \frac{\epsilon\cdot \sqrt{\Delta_t^T\nabla^2 F(\w_t)\Delta_t}\cdot\sqrt{\Delta_{t+1}^T\nabla^2 F(\w_t)\Delta_{t+1}}}{( 1 - \epsilon) \cdot \Delta_{t+1}^T\nabla^2 F(\w_t) \Delta_{t+1}}
\\&\le \frac{L \cdot\|\Delta_t\|^2 \cdot \|\Delta_{t+1}\|}{( 1 - \epsilon) \cdot \lmin^\K(\nabla^2 F(\w_t))\cdot \|\Delta_{t+1}\|^2} + \frac{2\epsilon\cdot \sqrt{\Delta_t^T\nabla^2 F(\w_t)\Delta_t}}{(1-\epsilon)\cdot\sqrt{\Delta_{t+1}^T\nabla^2 F(\w_t)\Delta_{t+1}}}
\\& \le \frac{L}{(1-\epsilon)\cdot \lmin^\K(\nabla^2 F(\w_t))}\cdot\frac{\|\Delta_t\|^2}{\|\Delta_{t+1}\|} + \frac{2\epsilon}{1 - \epsilon}\cdot\sqrt{\frac{\lmax^\K(\nabla^2 F(\w_t))\cdot \|\Delta_t\|^2}{\lmin^\K(\nabla^2 F(\w_t))\cdot\|\Delta_{t+1}\|^2}}
\end{align}
Reorganize it and combine \eqref{eq:dt} we get
\begin{align}
\|\Delta_{t+1}\| &\le \frac{L}{(1-\epsilon)\cdot 3\mu/4}\cdot\|\Delta_t\|^2 + \frac{2\epsilon}{1 - \epsilon}\cdot\sqrt{\frac{5\nu/4}{3\mu/4}}\cdot \|\Delta_t\|
\\ &\le \frac{2L}{(1-\epsilon)\mu} \cdot \|\Delta_t\|^2 + \frac{3\epsilon}{1-\epsilon}\cdot \sqrt{\kappa} \cdot \|\Delta_t\|,
\end{align}
which proves b). 
\end{enumerate}

\paragraph{Part 2} Now we move on to prove the case when the subproblem is approximately solved. Specifically we want to show under condition~\eqref{eq:err_req}, the error recursion \eqref{eq:rate0_inexact} holds where $C_l, C_q$ are the same constants in the case when the problem is solved exactly.

Consider at the iteration $t$ in Algorthm~\ref{alg:main}. First, $\w_{t+1}^* = \argmin_{\w\in\C}\Psi_t(\w)$ (Note that here $\w_{t+1}$ is not the minimizer any more). Then based on the proof in {\bf Part 1}, we have
\begin{align}
\|\w_{t+1}^* - \w^*\| \le C_q\cdot\|\w_t - \w^*\|^2 + C_l \cdot \|\w_t - \w^*\|.
\end{align}
Therefore
\begin{align}
\|\w_{t+1} - \w^*\| &\le \|\w_{t+1} - \w^*_{t+1}\| + \|\w_{t+1}^* - \w^*\|
\\ & \le \epsilon_0\cdot\|\w^*_{t+1} - \w_t \| + \|\w_{t+1}^* - \w^*\| 
\\ & \le \epsilon_0\cdot\|\w^*_{t+1} - \w^*\| + \epsilon_0\cdot \|\w_t - \w^*\| + \|\w_{t+1}^* - \w^*\|
\\ & = (1+ \epsilon_0)\cdot \|\w^*_{t+1} - \w^*\| + \epsilon_0 \cdot \|\w_t - \w^*\| 
\\ & \le (1+\epsilon_0)C_q \cdot \| \w_t - \w^*\|^2 + (\epsilon_0 + (1+\epsilon_0)C_l) \cdot \|\w_t - \w^*\|.
\end{align}
And this completes the proof.

\subsection{Proof of Theorem~\ref{lem:lev_const}}
Due to Assumption~\ref{assump:pos}, it takes $\bigO(\nnz(\A))$ time to construct the augmented matrix $\A \in \reals^{nk \times d}$. Here we apply a variant of the algorithm in~\citep{drineas2012fast} by using the sparse subspace embedding~\cite{clarkson13sparse} as the underlying sketching method. One can show that, with high probability, it takes $\bigO(\nnz(\A))d\log(nk+d)) = \bigO(\nnz(\A)\log n)$ time to compute a set of approximate leverage scores with constant approximation factor.
This completes the proof.

\subsection{Proof of Theorem~\ref{thm:lev}}
Based on Lemma~\ref{lem:equv_cond} (stated below), we convert condition~\eqref{eq:cond2} to a standard matrix product approximation guarantee.
A direct corollary of Theorem~\ref{thm:bplev} completes the proof.


\begin{lemma}
\label{lem:equv_cond}
Given $\A \in \reals^{N\times d}$ with $n$ blocks, $\Q \in \reals^{d\times d}$ satisfying $\Q \succeq \mathbf{0}$ and $\epsilon \in (0,1)$, and a sketching matrix $\S\in\reals^{s\times n}$, the following two conditions are equivalent:
\begin{enumerate}[(a)]
\item 
\begin{equation}
 \label{eq:pblev_obj1}
   -\epsilon(\A^T \A + \Q) \preceq \A^T \S^T \S \A - \A^T \A \preceq \epsilon (\A^T \A + \Q).
\end{equation}
\item
\begin{equation}
\label{eq:pblev_obj2}
|\x^T(\A^T\S^T\S\A - \A^T\A)\y | \le \epsilon \cdot \sqrt{\|\A\x\|_2^2 + \x^T\Q\x}\cdot \sqrt{ \|\A\y\|_2^2 + \y^T\Q\y}, ~~\forall \x,\y \in \reals^d.
\end{equation}
\end{enumerate}
\end{lemma}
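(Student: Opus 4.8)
The plan is to prove both implications by reducing the spectral condition~\eqref{eq:pblev_obj1} to a statement about a single symmetric quadratic form and then recovering the bilinear bound~\eqref{eq:pblev_obj2} by polarization. Write $\M = \A^T\S^T\S\A - \A^T\A$, which is symmetric, and $\mathbf{N} = \A^T\A + \Q \succeq \mathbf{0}$, and record the identity $\|\A\x\|_2^2 + \x^T\Q\x = \x^T\mathbf{N}\x$ for all $\x$. With this notation, \eqref{eq:pblev_obj2} reads $|\x^T\M\y| \le \epsilon\sqrt{\x^T\mathbf{N}\x}\,\sqrt{\y^T\mathbf{N}\y}$ for all $\x,\y$, and \eqref{eq:pblev_obj1} reads $-\epsilon\mathbf{N}\preceq\M\preceq\epsilon\mathbf{N}$, i.e. $|\x^T\M\x|\le\epsilon\,\x^T\mathbf{N}\x$ for all $\x$.

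The direction (b)$\Rightarrow$(a) is then immediate: setting $\y=\x$ in~\eqref{eq:pblev_obj2} yields $|\x^T\M\x|\le\epsilon\,\x^T\mathbf{N}\x$ for every $\x$, which is exactly~\eqref{eq:pblev_obj1}. For (a)$\Rightarrow$(b), fix $\x,\y$. First I would note the Cauchy--Schwarz inequality for the semidefinite form $\mathbf{N}$, namely $|\x^T\mathbf{N}\y|\le\sqrt{\x^T\mathbf{N}\x}\,\sqrt{\y^T\mathbf{N}\y}$. If both $\x^T\mathbf{N}\x>0$ and $\y^T\mathbf{N}\y>0$, then by homogeneity of both sides of~\eqref{eq:pblev_obj2} I may rescale so that $\x^T\mathbf{N}\x=\y^T\mathbf{N}\y=1$, whence $|\x^T\mathbf{N}\y|\le1$. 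Polarization gives $\x^T\M\y=\tfrac14\big[(\x+\y)^T\M(\x+\y)-(\x-\y)^T\M(\x-\y)\big]$, and applying~\eqref{eq:pblev_obj1} to $\x\pm\y$ bounds each term in absolute value by $\epsilon(\x\pm\y)^T\mathbf{N}(\x\pm\y)=\epsilon(2\pm2\x^T\mathbf{N}\y)\ge0$; hence $|\x^T\M\y|\le\tfrac14\big[\epsilon(2+2\x^T\mathbf{N}\y)+\epsilon(2-2\x^T\mathbf{N}\y)\big]=\epsilon=\epsilon\sqrt{\x^T\mathbf{N}\x}\,\sqrt{\y^T\mathbf{N}\y}$, as required.

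The remaining, and genuinely delicate, case is the degenerate one where $\mathbf{N}$ is singular along $\x$ or $\y$ — say $\x^T\mathbf{N}\x=0$, so the right-hand side of~\eqref{eq:pblev_obj2} is $0$ and we must show $\x^T\M\y=0$. This does \emph{not} follow from $\x^T\M\x=0$ alone. Instead I would use $\mathbf{N}$-Cauchy--Schwarz to get $\x^T\mathbf{N}\y=0$, use $-\epsilon\mathbf{N}\preceq\M\preceq\epsilon\mathbf{N}$ on $\x$ to conclude $\x^T\M\x=0$, and then apply $\M\preceq\epsilon\mathbf{N}$ to the vector $\x+t\y$: this gives $2t\,\x^T\M\y \le t^2(\epsilon\,\y^T\mathbf{N}\y-\y^T\M\y)$ for all $t\in\reals$, and since a quadratic-in-$t$ right side cannot dominate a nonzero linear-in-$t$ left side as $t\to0$, we must have $\x^T\M\y=0$. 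I expect this first-order argument to be the main point to get right; the rest is a routine polarization/Cauchy--Schwarz computation.
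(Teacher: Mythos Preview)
Your proof is correct but takes a genuinely different route from the paper. The paper factors $\bar\A = \begin{pmatrix}\A\\ \Q^{1/2}\end{pmatrix} = \bar\U\R$ with $\bar\U$ having orthonormal columns, sets $\U=\A\R^{-1}$, and observes that~\eqref{eq:pblev_obj1} is equivalent to $-\epsilon\mathbf{I}\preceq \U^T\S^T\S\U - \U^T\U\preceq \epsilon\mathbf{I}$, i.e.\ $\|\U^T\S^T\S\U-\U^T\U\|\le\epsilon$; then~\eqref{eq:pblev_obj2} follows from the ordinary Cauchy--Schwarz bound $|\x^T\R^T(\U^T\S^T\S\U-\U^T\U)\R\y|\le \epsilon\|\R\x\|\,\|\R\y\|$ together with $\|\R\z\|=\|\bar\U\R\z\|=\|\bar\A\z\|$. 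In contrast, you work intrinsically with the pair $(\M,\mathbf{N})$ and recover the bilinear estimate by polarization plus $\mathbf{N}$-Cauchy--Schwarz, handling the null space of $\mathbf{N}$ by a separate first-order argument.

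What each approach buys: the paper's change-of-variables is shorter and dovetails with the same $\bar\U,\R$ decomposition already used in the leverage-score analysis, but it tacitly requires $\R$ to be invertible, i.e.\ $\A^T\A+\Q\succ\mathbf{0}$; the degenerate case is not addressed there. Your argument is coordinate-free, needs no factorization, and transparently covers the singular-$\mathbf{N}$ case; it also makes clear that the equivalence is a general fact about any symmetric $\M$ and PSD $\mathbf{N}$ with $-\epsilon\mathbf{N}\preceq\M\preceq\epsilon\mathbf{N}$, not something special to sketched Gram matrices.
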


\begin{proof}
First, it is straightforward to see (b) $\Rightarrow$ (a) by setting $\x = \y$ in \eqref{eq:pblev_obj2}. So now we prove the other direction. 

Denote $\bar \A = \begin{pmatrix} \A \\ \Q^{\frac{1}{2}} \end{pmatrix}$.
Let $\bar \A = \bar \U\R$ where $\bar \U$ has orthonormal columns.
Then define $\U = \A \R^{-1}$ and $\U_i = \A_i \R^{-1}$ for $i = 1,\ldots,n$.
Then \eqref{eq:pblev_obj1} is equivalent to 
\begin{equation}
-\epsilon \bar \A^T \bar \A \preceq \A^T \S^T \S \A - \A^T \A \preceq \epsilon \bar \A^T \bar \A.
\end{equation}
Since $\R^{-1}$ is a full rank matrix, then
\begin{equation}
-\epsilon \R^{-T}\bar \A^T \bar \A \R^{-1}\preceq \R^{-T}(\A^T \S^T \S \A - \A^T \A)\R^{-1} \preceq \epsilon \R^{-T}\bar \A^T \bar \A\R^{-1}.
\end{equation}
i.e.
\begin{equation}
-\epsilon \mathbf{I}  = -\epsilon \bar\U^T \bar \U \preceq \U^T \S^T \S \U - \U^T \U\preceq \epsilon \bar \U^T \bar \U = \epsilon \mathbf{I}.\quad\text{[$\bar\U$ is orthonormal bases] }
\end{equation}
Therefore 
\begin{align}
\| \U^T \S^T \S \U - \U^T \U\| \le \epsilon.
\end{align}
Now consider
\begin{align}
|\x^T(\A^T\S^T\S\A - \A^T\A) \y|  &= |\x^T\R^T(\U^T\S^T\S \U - \U^T\U)\R\y|
\\
&\le \|\R\x\|_2 \cdot \|\U^T\S^T\S \U - \U^T\U\|_2\cdot\|\R\y\|_2
\\
& \le \epsilon \cdot \|\bar\U\R\x\|_2 \cdot \|\bar\U\R\y\|_2  \quad\text{[$\bar\U$ is orthonormal bases] }
\\
& = \epsilon \cdot \|\bar\A\x\|_2 \cdot \|\bar\A\y\|_2
\\
& = \epsilon \cdot \sqrt{\|\A\x\|_2^2 + \x^T\Q\x} \cdot \sqrt{\|\A\y\|_2^2 + \y^T\Q\y}.
\end{align}
This completes the proof.
\end{proof}

\subsection{Proof of Corollary~\ref{cor:complexity}}
According to Lemma~\ref{lem:general}, we have the follow error recursion
\begin{align}
\|\w_{t+1} - \w^*\| &\le (1+\epsilon_0)C_q \cdot \|\w_t - \w^*\|^2 + (\epsilon_0 + (1+\epsilon_0)C_l)\cdot \|\w_t - \w^*\| 
\\ &\le \left [(1+\epsilon_0) C_q \frac{\mu}{4L} + \epsilon_0 + (1+\epsilon_0)C_l\right]\cdot \|\w_t -\w^*\|.
\end{align}
If leverage scores sampling is used, then
\begin{align}
C_q = \frac{2L}{(1-\epsilon)\mu}, ~~C_l = \frac{3\epsilon\sqrt{\kappa}}{1-\epsilon}.
\end{align}
Therefore
\begin{align}
\|\w_{t+1} - \w^*\| &\le \left [(1+\epsilon_0) C_q \frac{\mu}{4L} + \epsilon_0 + (1+\epsilon_0)C_l\right]\cdot \|\w_t -\w^*\|
\\ &= \left[\frac{1 + \epsilon_0}{2(1-\epsilon)}+ \epsilon_0 + (1 + \epsilon_0)  \frac{3\epsilon\sqrt{\kappa}}{1-\epsilon} \right] \cdot \|\w_t - \w^*\|
\\ & = \rho \cdot \| \w_t - \w^*\|.
\end{align}
Now choose $\epsilon \le \min\left\{\frac{1}{10\sqrt{\kappa}},0.1\right\}$ and $\epsilon_0 \le 0.01$, then we can get $\rho < 0.9$.

Since we use CG, in order to achieve $\epsilon_0 \le 0.01$, we need $\tilde\bigO(sd\sqrt{\kappa})$ due to Table~\ref{tab:solver} (Note that since we are in the local region, $\tilde{\kappa_t} = \Theta(\kappa)$).
Meanwhile, since $\epsilon \le\min\left\{\frac{1}{10\sqrt{\kappa}},0.1\right\}$, then $s = \tilde\bigO(d/\epsilon^2) = \tilde\bigO(d\kappa)$.

Therefore, the complexity per iteration is 
\begin{align}
t_{const} + t_{solve} = \tilde\bigO(\nnz(\A)) + \tilde\bigO(d^2\kappa^{3/2}) = \tilde\bigO(\nnz(\A) +d^2\kappa^{3/2}) .
\end{align}

Similarly, if block norm squares sampling is used, then
\begin{align}
C_q = \frac{2L}{(1-2\epsilon\kappa)\mu}, ~~C_l = \frac{4\epsilon\kappa}{1-2\epsilon\kappa}.
\end{align}
Therefore
\begin{align}
\|\w_{t+1} - \w^*\| &\le \left [(1+\epsilon_0) C_q \frac{\mu}{4L} + \epsilon_0 + (1+\epsilon_0)C_l\right]\cdot \|\w_t -\w^*\|
\\ &= \left[\frac{1 + \epsilon_0}{2(1-2\epsilon\kappa)}+ \epsilon_0 + (1 + \epsilon_0)  \frac{4\epsilon\kappa}{1-2\epsilon\kappa} \right] \cdot \|\w_t - \w^*\|
\\ & = \rho \cdot \| \w_t - \w^*\|.
\end{align}
Now choose $\epsilon \le\min \left\{\frac{1}{10\kappa},0.1\right\}$ and $\epsilon_0 \le 0.01$, then we can get $\rho < 0.9$.
Similar to the case using leverage scores sampling, we get the total complexity per iteration which is
\begin{align}
t_{const} + t_{solve} = \tilde\bigO(\nnz(\A))  + \tilde\bigO({\bf sr}(\A)d \kappa^{5/2}) = \tilde\bigO(\nnz(\A) + {\bf sr}(\A)d \kappa^{5/2}).
\end{align}


\subsection{Proof of Theorem~\ref{thm:unif}}
For $j = 1,\ldots,s$, define
\begin{equation}
  \X_j =
     \frac{1}{s} (n \A_i^T \A_i - \A^T\A) ~~  \text{with probability }1/n, \forall i.
\end{equation}
And $\Y = \sum_{j=1}^s \X_j (= \A^T\S^T\S\A - \A^T\A$). Then $\Expect{\X_j} = 0$. In the following we will bound $\|\Y\|$ through matrix Bernstein inequality. 
For convenience, let's denote $K_t := \max_i\|\A_i\|^2$.

First,
\begin{align}
\|\X_j\|  = \frac{1}{s}\|n\A_i^T\A_i - \A^T\A\| \le 2nK_t/s.
\end{align}
\begin{align}
\Expect {\X_j^2} &= \frac{1}{ns^2} \sum_{i=1}^n(n \A_i^T \A_i - \A^T\A)^2 
\\ & = \frac{1}{ns^2} \sum_{i=1}^n \left [n^2 (\A_i^T\A_i)^2 + (\A^T\A)^2 - 2n\A_i^T\A_i\A^T\A  \right ]
\\ & = \frac{1}{s^2} (\A^T\A)^2 + \frac{1}{s^2} \sum_{i=1}^n n(\A_i^T\A_i)^2  - \frac{2}{s^2}(\A^T\A)^2
\\ & \preceq \frac{1}{s^2} \sum_{i=1}^n n(\A_i^T\A_i)^2
\\ & \preceq \frac{1}{s^2} nK_t \A^T\A. 
\end{align}
Therefore,
\begin{align}
\| \Expect{\Y^2}\| \le \| \frac{1}{s} nK_t \A^T\A\| = \frac{1}{s}nK_t \|\A\|^2.
\end{align}
By the matrix Bernstein bound~\cite{tropp2015introduction},  we have when
\begin{align}
s \ge 4nK_t ( \|\A\|^2 + \frac{\epsilon_0}{3}) \cdot \log(\frac{d}{\delta}) \cdot \frac{1}{\epsilon_0^2},
\end{align}
with probability at least $1 - \delta$, 
\begin{align}
\|Y\| \le \epsilon.
\end{align}

Now choose scale the $\epsilon_0 = \|A\|^2 \epsilon$, where $\epsilon \in (0,1)$, then 
\begin{align}
s \ge 4\frac{nK_t}{\|\A\|^2}  \cdot \log(\frac{d}{\delta}) \cdot \frac{1}{\epsilon^2} = 4n\frac{\max_i\|\A_i\|^2}{\|\A\|^2}  \cdot \log(\frac{d}{\delta}) \cdot \frac{1}{\epsilon^2},
\end{align}
with probability at least $1-\delta$, 
$ \|\A^T\S^T\S\A - \A^T\A\| \le \epsilon\cdot \|\A^T\A\| $ holds.
Since $\Q \succeq \mathbf{0}$, then $\|\A^T\A \| \le \| \A^T\A + \Q\|$. Therefore condition \eqref{eq:cond1} holds. And this completes the proof.

\end{document}